\documentclass[a4paper, 11pt]{article}


\usepackage{authblk}

\usepackage{amsmath,amsthm,amssymb,enumerate}
\usepackage{cite}
\usepackage[margin=2cm]{geometry}
\usepackage[hidelinks]{hyperref}
\usepackage[capitalise]{cleveref}
\usepackage{tikz}
\usetikzlibrary{calc,intersections,through,backgrounds,arrows,patterns}
\usepackage{subcaption}

\tikzstyle{grid_point}=[circle,draw=black!50,text=white,inner sep=0.5mm,font=\scriptsize]
\tikzstyle{grid_white_point}=[circle,draw=black!50,fill=white,text=white,inner sep=0.5mm,font=\scriptsize]
\tikzstyle{true_point}=[circle,draw=black!60,fill=black!60,text=white,inner sep=0.5mm,font=\scriptsize]
\tikzstyle{false_point}=[circle,draw=black!70,text=white,inner sep=0.5mm,font=\scriptsize]
\tikzstyle{intersection_point}=[circle,draw=black!90,fill=black!90,text=white,inner sep=0.4mm,font=\scriptsize]

\tikzstyle{oriented_segment} = [draw,line width=1pt,arrows=-latex',black!80]
\tikzstyle{dashed_line} = [draw,dashed,line width=0.4pt,black!60]

\makeatletter
\tikzset{
        hatch distance/.store in=\hatchdistance,
        hatch distance=5pt,
        hatch thickness/.store in=\hatchthickness,
        hatch thickness=5pt
        }
\pgfdeclarepatternformonly[\hatchdistance,\hatchthickness]{north east hatch}
    {\pgfqpoint{-1pt}{-1pt}}
    {\pgfqpoint{\hatchdistance}{\hatchdistance}}
    {\pgfpoint{\hatchdistance-1pt}{\hatchdistance-1pt}}%
    {
        \pgfsetcolor{\tikz@pattern@color}
        \pgfsetlinewidth{\hatchthickness}
        \pgfpathmoveto{\pgfqpoint{0pt}{0pt}}
        \pgfpathlineto{\pgfqpoint{\hatchdistance}{\hatchdistance}}
        \pgfusepath{stroke}
    }
\pgfdeclarepatternformonly[\hatchdistance,\hatchthickness]{north west hatch}
    {\pgfqpoint{-1pt}{-1pt}}
    {\pgfqpoint{\hatchdistance}{\hatchdistance}}
    {\pgfpoint{\hatchdistance-1pt}{\hatchdistance-1pt}}%
    {
        \pgfsetcolor{\tikz@pattern@color}
        \pgfsetlinewidth{\hatchthickness}
        \pgfpathmoveto{\pgfqpoint{0pt}{\hatchdistance}}
        \pgfpathlineto{\pgfqpoint{\hatchdistance}{0pt}}
        \pgfusepath{stroke}
    }
\makeatother

\tikzstyle{small_text} = [font=\scriptsize]

\theoremstyle{plain}
\newtheorem{theorem}{Theorem}

\newtheorem{corollary}[theorem]{Corollary}
\newtheorem{example}[theorem]{Example}
\newtheorem{lemma}[theorem]{Lemma}

\newtheorem{claim}[theorem]{Claim}
\theoremstyle{definition}
\newtheorem{definition}{Definition}[section]

\newcommand{\Conv}{\textup{Conv}}
\newcommand{\Vertic}{\textup{Vert}}
\newcommand{\dir}{\overrightarrow}

\newcommand{\superb}{proper }

\newcommand{\andl}{\wedge}

\newcommand{\gridMN}{\mathcal{G}_{m,n}}

\newcommand{\grid}{\mathcal{G}}

\newcommand{\tr}[3]{{#1}{#2}{#3}}
\newcommand{\ortr}[3]{\overrightarrow{{#1}{#2}{#3}}}

\title{A characterization of 2-threshold functions via pairs of prime segments}
\author[1]{Elena Zamaraeva}
\author[2]{Jovi{\v{s}}a {\v{Z}}uni{\'c}}
\affil[1]{Mathematics Institute, University of Warwick, UK}
\affil[2]{Mathematical Institute, Serbian Academy of Sciences, Serbia}

\definecolor{light_grey}{rgb}{0.85,0.85,0.85}
\definecolor{light_light_grey}{rgb}{0.95,0.95,0.95}

\ifpdf
\hypersetup{
  pdftitle={On the number of 2-threshold functions},
  pdfauthor={E. Zamaraeva and J. {\v{Z}}uni{\'c}}
}
\fi

\begin{document}

\maketitle

\begin{abstract}
A $\{0,1\}$-valued function on a two-dimensional rectangular grid is called threshold if its sets of zeros and ones are separable by a straight line. 
In this paper we study 2-threshold functions, i.e. functions representable as the conjunction of two threshold functions.
We provide a characterization of 2-threshold functions by pairs of oriented prime segments, where each such segment is defined by an ordered pair of adjacent integer points.

\end{abstract}

\textbf{Keywords:}
  threshold function, $k$-threshold function, intersection of halfplanes, integer lattice, rectangular grid, essential point

\tableofcontents

\section{Introduction}
Denote a two-dimensional rectangular grid by $\gridMN = \{0,\dots,m-1\} \times \{0,\dots,n-1\}$.
%
A function $f$ mapping $\gridMN$ to $\{0,1\}$ is called \textit{threshold} if there exist natural numbers $a_0,a_1,a_2$ such that for each $(x_1,x_2) \in \gridMN$
$$
f(x_1,x_2) = 1 \iff a_1x_1 + a_2x_2 \geq a_0.
$$
The inequality $a_1x_1 + a_2x_2 \geq a_0$ is called a \textit{threshold inequality} for the function $f$.
We also say that the set of  true points $M_1(f)$ and the set of false points $M_0(f)$ are separable by the line $a_1x_1 + a_2x_2 = a_0$.

It is easy to see that $f$ is threshold if and only if
$$
\Conv(M_0(f)) \cap \Conv(M_1(f)) = \emptyset,
$$
where $\Conv(T)$ denotes the convex hull of a given set of points $T$.

For a natural number $k \geq 2$, a function $f :\gridMN \rightarrow \{0,1\}$ is called \textit{$k$-threshold} if there exist at most $k$ threshold functions $f_1, \dots, f_k$ such that $f$ coincides with the conjunction of the functions $f_1,\dots,f_k$, i.e. $f= f_1 \land \dots \land f_k$.
We also say that the functions $f_1,\dots,f_k$ \emph{define} the $k$-threshold function $f$.
A $k$-threshold function is called \emph{proper $k$-threshold} if it is not $(k-1)$-threshold.

Threshold functions refer to the linear partitions of a given set of points.
One also studies non-linear partitions by circles \cite{huxley-focm,huxley-lms}, convex curves \cite{ivic}, arbitrary curves \cite{Zunic1998} in 2-dimension and spheres \cite{zunic-aam} and surfaces \cite{zunic-ieee} in higher dimensions.
In particular, polynomial threshold functions are considered in \cite{anthony, bruck, boolean, boolean-2}.
It is worth to note, that $k$-threshold functions represent the partition of the domain by at most $k$ straight lines (halfspaces) \textit{in general position}, and hence have richer structure than many other studied partitions by multiple lines or surfaces such as parallel hyperplanes \cite{Zunic2007} or $d$-dimensional spheres centered at the same point.

In machine learning theory learning of Boolean $k$-threshold functions was studied, for instance, in \cite{Baum1991, Hegedus1997, Kwek1998, Klivans2004}.
Lower bounds on the complexity of learning threshold, $k$-threshold functions, and some related geometric objects were derived in \cite{Maass1994}.
An efficient algorithm of learning with membership queries for $k$-threshold functions on the two-dimensional grid was developed in \cite{Maass}.
Structural properties of threshold and $k$-threshold functions affecting their learning complexity were also studied in \cite{Zolotykh,Shevchenko3,Zamaraeva2016, Zamaraeva2017,elena-3,elena-4}.

In the realm of exact learning, \emph{specifying} (\emph{teaching}) \emph{sets} and \emph{essential points} play a special role.
Let $C$ be a class of functions mapping $S$ to $\{0,1\}$, and let $f$ be a function from $C$.
A set of points $T \subseteq S$ is called a \emph{teaching} or \emph{specifying set for $f$ with respect to $C$} if no other function from $C$ coincides with $f$ in all points of $T$.
The number of points in a minimum specifying set of $f$ (with respect to $C$) is called the \emph{specification number of $f$} (\emph{with respect to $C$}).
Clearly, the target function $f$ cannot be identified without learning the values of all points in a specifying set for $f$.

A point $x \in  S$ is called \emph{essential for $f$ with respect to $C$} if there exists a function $g \in C$ such that $g(x) \neq f(x)$ and $g$ coincides with $f$ on $S \setminus \{x\}$.
It is easy to see that the set of essential points of $f$ is a subset of any specifying set of $f$.
Furthermore, it is known that the set of essential points of a threshold function is a specifying set by itself (see e.g. \cite{AnthonyBrightwellShaweTaylor1995, Shevchenko}).
The specification number and essential points of Boolean threshold functions were studied in \cite{AnthonyBrightwellShaweTaylor1995}, and the non-Boolean case was considered in \cite{Zolotykh, Shevchenko, Shevchenko2, Shevchenko3}.
The specifying sets of $k$-threshold functions were studied in \cite{Zamaraeva2016,Zamaraeva2017}.

In digital geometry, the problem of polyhedral separability can be formulated in terms of $k$-threshold functions as follows: given a domain $S$, a finite set of points $T \subseteq S$, and a positive integer $k$,
does there exist a $k$-threshold function $f$ on $S$ such that $T$ is the set of true points of $f$?
The problem of polyhedral separability is widely investigated (see \cite{Edelsbrunner1988, Megiddo1988, Bennett1993, Astorino2002, Dundar2008, Astorino2013, Gerard2017, Gerard20172}).
In particular, in \cite{Bennett1993} the authors studied bilinear separation which is closely related to $2$-threshold functions, and the papers \cite{Gerard2017, Gerard20172, Gerard2020} are devoted to the polyhedral separability problem in two- and three-dimensional spaces.

Threshold functions admit various representations and usually the choice of specific description depends on the restrictions of a particular application. 
The most natural way of defining threshold functions is via threshold inequalities. 
However, for a given threshold function there are continuously many threshold inequalities, and given two linear inequalities it is not obvious whether they define the same threshold function or not. 

Another way of describing threshold functions is via essential points. 
The set of essential points of a threshold function $f$ together with the values of $f$ in all these points uniquely identifies $f$ in the class of threshold functions. 
However, for 2-threshold functions this approach does not work.
In contrast to threshold functions, the set of essential points of a 2-threshold function does not always specify it (see, for instance, \cite{Zamaraeva2016}).

A useful characterization of two-dimensional threshold functions via oriented prime segments~(i.e., ordered pairs of adjacent integer points) was provided in \cite{Koplowitz}.
In that and the subsequent works \cite{Acketa, Haukkanen} the relation between threshold functions and prime segments was utilized to estimate the number of threshold functions asymptotically.
It is important to note that the endpoints of the segment defining a threshold function $f$, are essential for $f$, and hence, $f$ can be defined by an \emph{ordered} pair of \emph{adjacent} essential points.
Such a representation is space-optimal and requires less memory storage than the representation via the set of all essential points, as the latter consists of 3 or 4 points \cite{Shevchenko2}.

Since a $2$-threshold function is the conjunction of two threshold functions, it is also possible to define it via a pair of threshold functions or the corresponding pair of prime segments.
A drawback of such representation is that the same function, in general, can be defined by many different pairs of threshold functions, and therefore by many different pairs of prime segments.
Moreover, the points which are essential for a threshold function, can be inessential for its conjunction with another threshold function.
In this paper we overcome these challenges by introducing pairs of oriented prime segments with certain properties which we call \emph{proper} pairs of segments.
We show that the endpoints of the segments from a proper pair of segments defining a $2$-threshold function $f$ are essential for $f$.
Furthermore, we establish a bijection between the proper $2$-threshold functions which have a true point on the boundary of $\gridMN$ and the proper pairs of segments defining these functions.
In the subsequent work \cite{part2} this bijection is used for the asymptotic enumeration of $2$-threshold functions.

Finally, if we interpret a $2$-threshold function $f$ on $\gridMN$ as a convex integer polygon $\Conv(M_1(f))$ in $\gridMN$, the proposed representation of $f$ by a proper pair of segments provides a $O(\log(m+n))$ memory space representation, while the general representation scheme based on integer polygons \cite{Zunic1995} in $\gridMN$ requires $O\left((m+n)^{\frac{2}{3}} \log(m+n)\right)$ memory space.

The organization of the paper is as follows.
All preliminary information can be found in \cref{sec:preliminaries}.
In \cref{sec:oriented_segments} we describe and adapt to our purposes the bijection between oriented prime segments and non-constant threshold functions from \cite{Koplowitz}.
In \cref{sec:pair_oriented_segments} we introduce proper pairs of segments and show that any proper $2$-threshold function can be defined by a proper pair of segments.
In \cref{sec:proper_functions} we prove that for a proper $2$-threshold function with a true point on the boundary of $\gridMN$ there exists a unique proper pair of segments that defines the function.

\section{Preliminaries}
\label{sec:preliminaries}

In this paper we denote points on the plane by capital letters $A,B,C$, etc. 
For two sets of points $S_1$, $S_2$ we denote by $d(S_1, S_2)$ the (Euclidean) distance between the sets,
that is, the minimum distance between two points $A \in S_1$ and $B \in S_2$.
When a set consists of a single point we omit $\{ \}$ and write simply $d(A, S_2)$ or $d(A,B)$
to denote the distance between the point $A$ and set $S_2$ or the distance between the points $A$ and $B$, respectively.
For two distinct points $A$, $B$ we denote by $\ell(AB)$ the line which passes through these points.

A point $A = (x,y)$  is \emph{integer}, if both of its coordinates $x$ and $y$ are integer.
Two points $A$, $B$ are called \textit{adjacent} if they are integer and there is no other integer points on $AB$.
A segment with adjacent endpoints is called \textit{prime}.

We say that the points $A_1, A_2, \dots, A_n$ are in convex position if $\{A_1,\dots,A_n\} = \Vertic(\Conv(\{A_1,\dots,A_n\}))$.
We also denote by $P(f)$ the convex hull of $M_1(f)$, that is $P(f) = \textup{Conv}(M_1(f))$.

\subsection{Segments, triangles, quadrilaterals and their orientation}
\label{sec:seg_tr_orientation}

We often denote a \emph{convex} polygon 
by a sequence of its vertices in either clockwise or counterclockwise order. 
For example, by $AB$, $ABC$, and $ABCD$ we denote, respectively, the segment with endpoints $A,B$,
the triangle with vertices $A,B,C$, and the convex quadrilateral with vertices $A,B,C,D$ and edges $AB$, $BC$, $CD$, $DA$.
When the order of vertices is important, we call the polygon or segment \emph{oriented} and add an arrow in the notation, that is, 
$\dir{AB}$, $\dir{ABC}$, $\dir{ABCD}$ denote the oriented segment, the oriented triangle, and the oriented convex quadrilateral, respectively.

Let $A=(a_1,a_2),B=(b_1,b_2),C=(c_1,c_2)$ be distinct points on the plane. It is a basic fact that
$A,B,C$ are collinear if and only if $\Delta = 0$, where 
$$
\Delta =\begin{vmatrix}
a_1 & a_2 & 1 \\ 
b_1 & b_2 & 1 \\ 
c_1 & c_2 & 1
\notag
\end{vmatrix}.
$$
The oriented triangle $\ortr{A}{B}{C}$ is called \textit{clockwise} if $\Delta < 0$ and \textit{counterclockwise} if $\Delta > 0$.
Geometrically, an oriented triangle $\ortr{A}{B}{C}$ is clockwise (resp. counterclockwise) if its vertices $A, B, C$, in order, rotate clockwise (resp. counterclockwise) around the triangle's center.
Some properties of oriented triangles easily follow from the definition:

\begin{claim}\label{prop:theSameOrient}
Let $\ell$ be a line and let $A,B$ be two distinct points on $\ell$.
Then for any two points $C,D \notin \ell$ the orientations of the triangles $\ortr{A}{B}{C}$ and $\ortr{A}{B}{D}$ are the same if and only if $\ell \cap CD = \emptyset$ (see Fig. \ref{fig:co-oriented-triangles} and \ref{fig:opposite-triangles}).
\end{claim}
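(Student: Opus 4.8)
The plan is to prove the claim directly from the sign of the determinant $\Delta$, viewing $\Delta$ as an affine function of the third point. Fix the line $\ell$ and the two distinct points $A,B\in\ell$. For a variable point $X=(x_1,x_2)$ define
\[
g(X) = \begin{vmatrix} a_1 & a_2 & 1\\ b_1 & b_2 & 1\\ x_1 & x_2 & 1\end{vmatrix},
\]
which, expanded along the third row, is an affine (degree-one) function of $x_1,x_2$. By the basic fact quoted just before the claim, $g(X)=0$ precisely when $X\in\ell$; so $\ell$ is exactly the zero set of $g$, and since $A\ne B$, $g$ is not identically zero. Hence $g$ is a nonzero affine function whose zero locus is $\ell$, and therefore $g$ is positive on one of the two open halfplanes bounded by $\ell$ and negative on the other. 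The orientation of $\ortr{A}{B}{C}$ is by definition determined by the sign of $g(C)$: counterclockwise if $g(C)>0$, clockwise if $g(C)<0$ (and for $C\notin\ell$ one of these holds).

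**Next I would** translate the two sides of the equivalence into this language. Since $C,D\notin\ell$, each of $g(C),g(D)$ is nonzero, so "the orientations of $\ortr{A}{B}{C}$ and $\ortr{A}{B}{D}$ are the same" is equivalent to "$g(C)$ and $g(D)$ have the same sign", i.e. $g(C)g(D)>0$, i.e. $C$ and $D$ lie in the same open halfplane bounded by $\ell$. On the other hand, the segment $CD$ meets $\ell$ if and only if $C$ and $D$ lie in opposite closed halfplanes; but since neither $C$ nor $D$ is on $\ell$, this happens if and only if they lie in opposite open halfplanes. Thus "$\ell\cap CD=\emptyset$" is equivalent to "$C$ and $D$ lie in the same open halfplane bounded by $\ell$", which matches the other side of the equivalence, completing the argument.

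**The one point that needs a careful word** is the elementary fact that a segment between two points not on $\ell$ avoids $\ell$ exactly when the two endpoints are on the same side. This is standard: parametrize $CD$ as $C+t(D-C)$ for $t\in[0,1]$ and apply the affine function $g$ to get $g(C)+t\bigl(g(D)-g(C)\bigr)$, a linear function of $t$; it vanishes for some $t\in[0,1]$ iff $g(C)$ and $g(D)$ have opposite signs or one of them is zero, and the latter is excluded by $C,D\notin\ell$. So $CD\cap\ell=\emptyset$ iff $g(C)g(D)>0$. I do not expect any real obstacle here; the only thing to be mildly careful about is bookkeeping the open-versus-closed halfplane distinction, which is exactly where the hypothesis $C,D\notin\ell$ is used, and to recall that the definition of orientation via $\Delta$ already presupposes non-collinearity so that the three vertices being "the same orientation" is meaningful. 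The figures \ref{fig:co-oriented-triangles} and \ref{fig:opposite-triangles} illustrate the two cases.
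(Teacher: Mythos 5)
Your proof is correct: the sign of the affine function $g$ cleanly encodes both the orientation of $\ortr{A}{B}{C}$ and the side of $\ell$ on which $C$ lies, and the parametrization of $CD$ handles the intersection criterion rigorously. The paper offers no proof of this claim at all --- it is one of the properties asserted to ``easily follow from the definition'' --- so your argument simply supplies, correctly, the standard verification the authors left implicit.
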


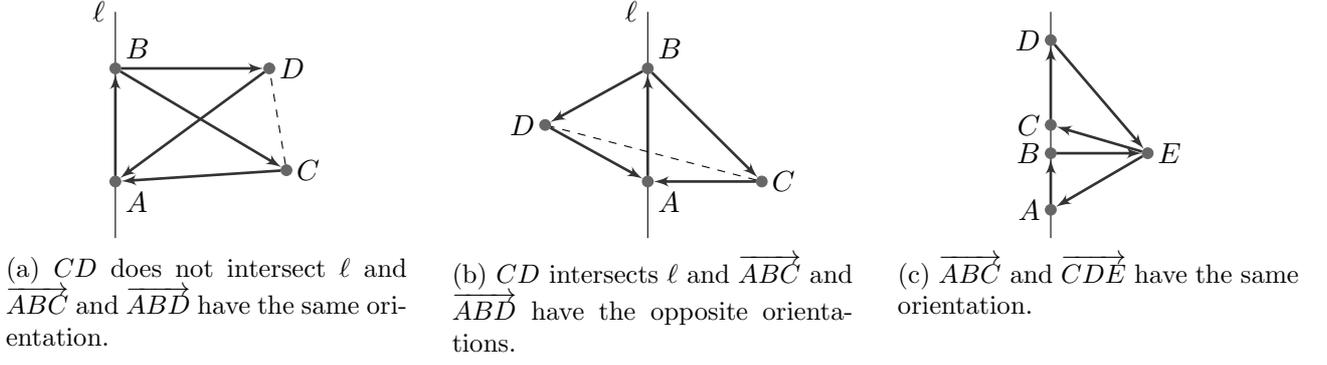
\begin{figure}
\begin{subfigure}[t]{0.31\textwidth}
\centering
\begin{tikzpicture}[scale=0.75]
	
	\draw[black] (0,0) -- (0,4);	
	\node[true_point] (A) at (0,1) {};
	\node[true_point] (B) at (0,3) {};
	\node[true_point] (C) at (3,1.2) {};
	\node[true_point] (D) at (2.7,3) {};
	\draw[oriented_segment] (A) -- (B);
	\draw[oriented_segment] (B) -- (C);
	\draw[oriented_segment] (C) -- (A);
	\draw[oriented_segment] (B) -- (D);
	\draw[oriented_segment] (D) -- (A);
	\draw (A) node [below right] {$A$};
	\draw (B) node [above right] {$B$};
	\draw (C) node [right] {$C$};
	\draw (D) node [right] {$D$};
	\draw (0,4) node [left] {$\ell$};
	\draw[dashed] (C) -- (D);
	
\end{tikzpicture}
\caption{$CD$ does not intersect $\ell$ and $\protect\ortr{A}{B}{C}$ and $\protect\ortr{A}{B}{D}$ have the same orientation.}
\label{fig:co-oriented-triangles}
\end{subfigure}\hfill\hfill
\begin{subfigure}[t]{0.31\textwidth}
\centering
\begin{tikzpicture}[scale=0.75]
	\draw[black] (0,0) -- (0,4);	
	\node[true_point] (A) at (0,1) {};
	\node[true_point] (B) at (0,3) {};
	\node[true_point] (C) at (2,1) {};
	\node[true_point] (D) at (-1.8,2) {};
	\draw[oriented_segment] (A) -- (B);
	\draw[oriented_segment] (B) -- (C);
	\draw[oriented_segment] (C) -- (A);
	\draw[oriented_segment] (B) -- (D);
	\draw[oriented_segment] (D) -- (A);
	\draw (A) node [below right] {$A$};
	\draw (B) node [above right] {$B$};
	\draw (C) node [right] {$C$};
	\draw (D) node [left] {$D$};
	\draw (0,4) node [left] {$\ell$};
	\draw[dashed] (C) -- (D);
\end{tikzpicture}
\caption{$CD$ intersects $\ell$ and $\protect\ortr{A}{B}{C}$ and $\protect\ortr{A}{B}{D}$ have the opposite orientations.}
\label{fig:opposite-triangles}
\end{subfigure}\hfill\hfill
\begin{subfigure}[t]{0.31\textwidth}
\centering
\begin{tikzpicture}[scale=0.75]
	\draw[black] (0,0) -- (0,4);	
	\node[true_point] (A) at (0,0.5) {};
	\node[true_point] (B) at (0,1.5) {};
	\node[true_point] (C) at (0,2) {};
	\node[true_point] (D) at (0,3.5) {};
	\node[true_point] (E) at (1.7,1.5) {};
	\draw[oriented_segment] (A) -- (B);
	\draw[oriented_segment] (B) -- (E);
	\draw[oriented_segment] (E) -- (A);
	\draw[oriented_segment] (C) -- (D);
	\draw[oriented_segment] (D) -- (E);
	\draw[oriented_segment] (E) -- (C);
	\draw (A) node [left] {$A$};
	\draw (B) node [left] {$B$};
	\draw (C) node [left] {$C$};
	\draw (D) node [left] {$D$};
	\draw (E) node [right] {$E$};
\end{tikzpicture}
\caption{$\protect\ortr{A}{B}{C}$ and $\protect\ortr{C}{D}{E}$ have the same orientation.}
\label{fig:co-oriented-triangles-E}
\end{subfigure}
\caption{The orientation of the triangles depending on the positions of points}
\end{figure}

\begin{claim}
\label{prop:collinear_segments_and_point}
Let $\dir{AB},\dir{CD}$ be two collinear segments with the same orientation.
Then for any point $E \notin \ell(AB)$ the triangles $\ortr{A}{B}{E}$ and $\ortr{C}{D}{E}$ have the same orientation (see \cref{fig:co-oriented-triangles-E}).
\end{claim}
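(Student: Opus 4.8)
The plan is to reduce the claim to a sign computation of the defining determinant $\Delta$, exploiting that for four collinear points this determinant factors in a transparent way. First I would fix a parametrization of the common line $\ell=\ell(AB)=\ell(CD)$: choose an origin $O\in\ell$ and a direction vector $\vec u$ of $\ell$, so that $\ell=\{\,O+t\vec u:t\in\mathbb{R}\,\}$, and write each of the four points as $X=O+t_X\vec u$ for a suitable scalar $t_X$, $X\in\{A,B,C,D\}$. Recalling (by subtracting the first row from the other two rows of the $3\times 3$ determinant) that for any point $E$ the quantity $\Delta$ attached to $\ortr{X}{Y}{E}$ equals the scalar cross product $(Y-X)\times(E-X)$, where $(p,q)\times(r,s):=ps-qr$, I would then substitute $B-A=(t_B-t_A)\vec u$ and $D-C=(t_D-t_C)\vec u$ and use $\vec u\times\vec u=0$ to obtain
$$
\Delta(\ortr{A}{B}{E})=(t_B-t_A)\bigl(\vec u\times(E-O)\bigr),\qquad \Delta(\ortr{C}{D}{E})=(t_D-t_C)\bigl(\vec u\times(E-O)\bigr).
$$

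Next I would record the two elementary facts that make the argument close. Setting $\kappa:=\vec u\times(E-O)$, note that $E\in\ell$ iff $E-O$ is parallel to $\vec u$ iff $\kappa=0$; since $E\notin\ell(AB)$ by hypothesis, $\kappa\neq 0$. On the other side, the hypothesis that $\dir{AB}$ and $\dir{CD}$ are collinear with the \emph{same} orientation means exactly that the vectors $B-A=(t_B-t_A)\vec u$ and $D-C=(t_D-t_C)\vec u$ point in the same direction along $\ell$, i.e. $t_B-t_A$ and $t_D-t_C$ are nonzero and of the same sign.

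Combining these, $\sign\Delta(\ortr{A}{B}{E})=\sign(t_B-t_A)\cdot\sign\kappa=\sign(t_D-t_C)\cdot\sign\kappa=\sign\Delta(\ortr{C}{D}{E})$, and both determinants are nonzero. By the definition of orientation ($\Delta<0$ for clockwise, $\Delta>0$ for counterclockwise), the triangles $\ortr{A}{B}{E}$ and $\ortr{C}{D}{E}$ have the same orientation, which is the assertion.

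I do not expect a genuine obstacle: the only points needing care are (i) making explicit that "same orientation" for two collinear oriented segments is the condition $\sign(t_B-t_A)=\sign(t_D-t_C)$, and (ii) the routine identity $\Delta(\ortr{X}{Y}{E})=(Y-X)\times(E-X)$. As an alternative one could try to deduce the claim from \cref{prop:theSameOrient} alone by chaining through auxiliary triangles that share an edge with $\ortr{A}{B}{E}$ and with $\ortr{C}{D}{E}$, but that route forces a case split on the cyclic order of the four collinear points $A,B,C,D$ along $\ell$, so the direct determinant computation is the cleaner path.
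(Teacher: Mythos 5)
Your proof is correct, and it matches the paper's intent: the paper states this claim without proof as one of the properties that ``easily follow from the definition,'' and your determinant factorization $\Delta(\ortr{X}{Y}{E})=(Y-X)\times(E-X)$ with the parametrization $X=O+t_X\vec u$ is exactly the direct verification from that definition. The two points you flag as needing care (the meaning of ``same orientation'' for collinear segments, and the row-reduction identity for $\Delta$) are handled correctly, so nothing further is required.
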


\begin{claim}
\label{prop:clockwise_triangles}
Let $A$, $B$, $C$, $D$ be four distinct points such that $\ortr{A}{B}{D}$, $\ortr{B}{C}{D}$, $\ortr{C}{A}{D}$ are clockwise (resp. counterclockwise) triangles.
Then $\ortr{A}{B}{C}$ is a clockwise (resp. counterclockwise) triangle.
\end{claim}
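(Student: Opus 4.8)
The plan is to reduce the statement to a single determinantal identity relating the four triangles. Write $\Delta(P,Q,R)$ for the $3\times 3$ determinant from the definition of orientation, so that $\ortr{P}{Q}{R}$ is clockwise iff $\Delta(P,Q,R)<0$ and counterclockwise iff $\Delta(P,Q,R)>0$. I claim that for \emph{any} four points $A,B,C,D$
$$
\Delta(A,B,C) = \Delta(A,B,D) + \Delta(B,C,D) + \Delta(C,A,D).
$$
Granting this, the proof is immediate: if $\ortr{A}{B}{D}$, $\ortr{B}{C}{D}$, $\ortr{C}{A}{D}$ are all clockwise, then the three summands on the right are negative, hence $\Delta(A,B,C)<0$, so $\ortr{A}{B}{C}$ is clockwise; the counterclockwise case is identical with all inequalities reversed. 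In particular $\Delta(A,B,C)\neq 0$, so $A,B,C$ are automatically not collinear and $\ortr{A}{B}{C}$ is a genuine oriented triangle.

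It remains to verify the identity. The cleanest route is to note that $\Delta(P,Q,R)=(Q-P)\times(R-P)$, where $u\times w := u_1w_2-u_2w_1$ is the scalar planar cross product; this form is bilinear and antisymmetric, satisfies $u\times u = 0$, and is invariant under translating $P,Q,R$ by a common vector. Setting $a = A-D$, $b = B-D$, $c = C-D$ and expanding,
$$
\Delta(A,B,C) = (B-A)\times(C-A) = (b-a)\times(c-a) = b\times c + a\times b + c\times a,
$$
while $a\times b = (A-D)\times(B-D) = \Delta(D,A,B) = \Delta(A,B,D)$ by the cyclic invariance of the determinant, and likewise $b\times c = \Delta(B,C,D)$ and $c\times a = \Delta(C,A,D)$. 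This yields the identity. (Alternatively, one may simply expand all four determinants in coordinates and check that the two sides agree term by term.)

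I do not anticipate a genuine obstacle here; the only point requiring care is the bookkeeping of signs under permutations of the rows when translating between the ``clockwise/counterclockwise'' language and the determinant $\Delta$, together with the fact that a cyclic permutation of the three points leaves $\Delta$ unchanged. A purely geometric alternative is also available: one can deduce from \cref{prop:theSameOrient} that the hypothesis forces $D$ to lie in the interior of triangle $ABC$, whence $C$ and $D$ lie on the same side of $\ell(AB)$, and therefore $\ortr{A}{B}{C}$ and $\ortr{A}{B}{D}$ have the same orientation. However, proving that $D$ is interior is precisely the part that the determinantal identity disposes of for free, so I would prefer the algebraic argument.
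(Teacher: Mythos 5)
Your proof is correct, but it takes a genuinely different route from the paper. The paper argues geometrically: it first shows that $D$ cannot be a vertex of $\Conv(\{A,B,C,D\})$ (by checking that neither $CD$ nor $BD$ can be an edge, given the stated orientations), concludes that $D$ lies inside the triangle $ABC$, and then invokes \cref{prop:theSameOrient} to transfer the orientation of $\ortr{A}{B}{D}$ to $\ortr{A}{B}{C}$. Your argument instead rests on the identity $\Delta(A,B,C)=\Delta(A,B,D)+\Delta(B,C,D)+\Delta(C,A,D)$, which I have checked: with $\Delta(P,Q,R)=(Q-P)\times(R-P)$ and $a=A-D$, $b=B-D$, $c=C-D$ one indeed gets $(b-a)\times(c-a)=b\times c+a\times b+c\times a$, and cyclic invariance of the determinant converts the three summands into the three hypothesized orientations. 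The sign argument then closes the proof in one line, and as you note it yields non-collinearity of $A,B,C$ for free, whereas the paper's route needs the small case analysis about edges of the convex hull. What the paper's approach buys is the explicit geometric fact that $D$ is interior to $ABC$ (useful for intuition and consistent with the geometric style of the surrounding claims, e.g.\ \cref{prop:convex_rectangle}); what yours buys is brevity, the quantitative additivity of signed areas, and uniform treatment of all configurations without convex-hull reasoning. Either proof is acceptable.
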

\begin{proof}
We will prove the statement for clockwise triangles, the counterclockwise case is symmetric.
Denote $\mathcal{P} = \Conv(\{A,B,C,D\})$.
First, we show that $D$ is not a vertex of $\mathcal{P}$.
Suppose, to the contrary, that $D$ is a vertex of $\mathcal{P}$, then two of the segments $CD$, $BD$, $AD$ are edges of $\mathcal{P}$.
The triangle $\ortr{C}{A}{D}$ is clockwise, hence the triangle $\ortr{C}{D}{A}$ is counterclockwise and the points $A$ and $B$ 
are separated by $\ell(CD)$,
and therefore $CD$ is not an edge of $\mathcal{P}$.
Similarly, the opposite orientations of the triangles $\ortr{A}{B}{D}$ and $\ortr{B}{D}{C}$ imply that $BD$ is not an edge of $\mathcal{P}$.
The above contradicts the assumption that two of the segments $CD$, $BD$, $AD$ are edges of $\mathcal{P}$, and therefore
$D$ is not a vertex of $\mathcal{P}$ and $\mathcal{P}$ is the triangle with vertices $A,B,C$.
Finally, since $D$ is an interior point of $\mathcal{P}$, the points $C$ and $D$ lie on the same side from $\ell(AB)$, hence the triangles $\ortr{A}{B}{D}$ and $\ortr{A}{B}{C}$ have the same orientation, i.e. $\ortr{A}{B}{C}$ is clockwise, as required (see \cref{fig:D-inside-triangle}).
\end{proof}

\begin{figure}
\centering
\begin{tikzpicture}[scale=1]
	
	\node[true_point] (A) at (-1,0) {};
	\node[true_point] (B) at (1,2) {};
	\node[true_point] (C) at (3,0) {};
	\node[true_point] (D) at (1,1) {};
	\draw[oriented_segment] (A) -- (B);
	\draw[oriented_segment] (B) -- (C);
	\draw[oriented_segment] (C) -- (A);
	\draw[black] (B) -- (D);
	\draw[black] (D) -- (A);
	\draw[black] (D) -- (C);
	\draw (A) node [left] {$A$};
	\draw (B) node [above] {$B$};
	\draw (C) node [right] {$C$};
	\draw (D) node [below] {$D$};
	
\end{tikzpicture}
\caption{$\protect\ortr{A}{B}{C}$ has the same orientation as $\protect\ortr{A}{B}{D}$, $\protect\ortr{B}{C}{D}$, and $\protect\ortr{C}{A}{D}$.}
\label{fig:D-inside-triangle}
\end{figure}

It is clear, that for a given convex oriented quadrilateral $\dir{ABCD}$ the orientation of the triangles $\ortr{A}{B}{C}$, $\ortr{B}{C}{D}$, $\ortr{C}{D}{A}$, and $\ortr{D}{A}{B}$ is the same and determines the orientation of $\dir{ABCD}$.
Moreover, the opposite is also true.

\begin{claim}
\label{prop:convex_rectangle}
Let $\ortr{A}{B}{C}$, $\ortr{B}{C}{D}$, $\ortr{C}{D}{A}$, $\ortr{D}{A}{B}$ be clockwise (resp. counterclockwise) triangles.
Then $\Conv(\{A,B,C,D\})$ is a quadrilateral with edges $AB$, $BC$, $CD$, and $DA$ and the orientation of $\dir{ABCD}$
is clockwise (resp. counterclockwise).
\end{claim}
\begin{proof}
Clearly, $A,B,C,$ and $D$ are pairwise distinct points.
Let \linebreak $\mathcal{P} = \Conv(\{A,B,C,D\})$.
Since $\ortr{A}{B}{C}$ and $\ortr{D}{A}{B}$ are triangles with the same orientation, we conclude that $C$ and $D$ lie on the same side of $\ell(AB)$,
and therefore $\ell(AB)$ is a tangent to $\mathcal{P}$ and $AB$ is an edge of $\mathcal{P}$.
By similar arguments each of the segments $BC$, $CD$, and $DA$ is an edge of $\mathcal{P}$, hence $\mathcal{P}$ is a quadrilateral.
Finally, the orientation of the triangles implies that $\dir{ABCD}$ has the same orientation as the orientation of the triangles.
\end{proof}

\subsection{Convex sets and their tangents}
\label{sec:tangents}

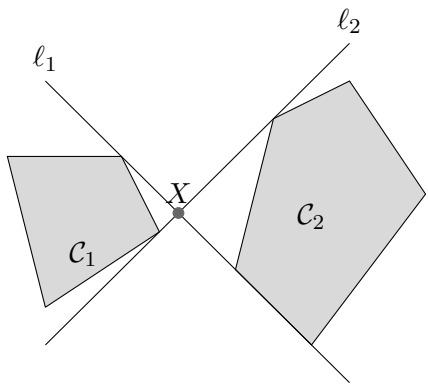
\begin{figure}
\centering
\begin{tikzpicture}[scale=0.5]
  
   	\draw[black,fill=light_grey] (1,2) -- (4,4) -- (3,6) -- (0,6) -- (1,2);
    	\draw[black,fill=light_grey] (6,3) -- (8,1) -- (11,5) -- (9,8) -- (7,7) -- (6,3);
 	\draw (1,1) -- (9,9);
 	\draw (1,8) -- (9,0);
	\node[true_point] (intersectionPoint) at (4.5,4.5) {};
 	\draw (4.5,4.5) node [above] {$X$};
	\draw (8,5) node [below] {$\mathcal{C}_2$};
	\draw (2,4) node [below] {$\mathcal{C}_1$};
	\draw (9,9) node [above] {$\ell_2$};
	\draw (1,8) node [above] {$\ell_1$};

\end{tikzpicture}
\caption{$\ell_1$ is the right tangent from $X$ to $\protect\mathcal{C}_1$ and to $\protect\mathcal{C}_2$, and the right inner common tangent for $\protect\mathcal{C}_1$ and $\protect\mathcal{C}_2$. 
$\ell_2$ is the left tangent from $X$ to $\protect\mathcal{C}_1$ and to $\protect\mathcal{C}_2$, and the left inner common tangent for $\protect\mathcal{C}_1$ and $\protect\mathcal{C}_2$.}
\label{fig:inner_common_tangent}
\end{figure}

Let $\mathcal C$ be a convex set. 
A convex polygon $\mathcal P$ is called \textit{circumscribed} about $\mathcal C$ if for every edge $AB$ of $\mathcal P$ the line $\ell(AB)$ is a tangent to $\mathcal C$ and $AB \cap \mathcal{C} \neq \emptyset$.

Let $\mathcal C_1$ and $\mathcal C_2$ be two disjoint convex sets.
A line $\ell$ is called an \textit{inner common tangent} to $\mathcal C_1$ and $\mathcal C_2$ if it is a tangent to both of them, 
and $\mathcal C_1$ and $\mathcal C_2$ are separated by $\ell$.

Let $\ell$ be a tangent to a convex set $\mathcal C$, and let $X$ be a point in $\ell \setminus \mathcal{C}$. 
Then $\ell$ is called a \textit{right} (resp. \textit{left}) \textit{tangent} from $X$ to $\mathcal{C}$ if for any points 
$Y \in \mathcal{C} \cap \ell$ and $Z \in \mathcal{C} \setminus \ell$ the triangle $\ortr{X}{Y}{Z}$ is counterclockwise (resp. clockwise).
The following claim is a simple consequence of the above definition.

\begin{claim}
\label{prop:tangent_on_line}
Let $\ell$ be the right (resp. left) tangent from a point $X$ to a convex set $\mathcal C$, and let $Y \in \ell$.
Then $\ell$ is the right (resp. left) tangent from $Y$ to $\mathcal C$ if and only if $XY \cap \mathcal{C} = \emptyset$.
\end{claim}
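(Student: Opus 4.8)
The plan is to derive both implications from Claim~\ref{prop:theSameOrient}, treating only the right-tangent case (the left-tangent case is identical after exchanging ``clockwise'' and ``counterclockwise''). I may assume $X\neq Y$, since otherwise both sides of the equivalence hold trivially, and I assume $\mathcal C\not\subseteq\ell$ so that $\mathcal C\setminus\ell\neq\emptyset$ and the notion of a right tangent is not vacuous. Observe that in either implication one knows $Y\notin\mathcal C$: if $XY\cap\mathcal C=\emptyset$ this follows from $Y\in XY$, and if $\ell$ is the right tangent from $Y$ it is part of the definition; together with $X\in\ell\setminus\mathcal C$ this means $X$ and $Y$ are distinct from every point of $\mathcal C$.

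For the implication ``$XY\cap\mathcal C=\emptyset\ \Rightarrow\ \ell$ is the right tangent from $Y$'', fix arbitrary $Y'\in\mathcal C\cap\ell$ and $Z\in\mathcal C\setminus\ell$; it suffices to show that $\ortr{Y}{Y'}{Z}$ is counterclockwise. Put $m=\ell(Y'Z)$. Since $Z\notin\ell$, the lines $m$ and $\ell$ are distinct and meet only at $Y'$, and neither $X$ nor $Y$ lies on $m$ because both avoid $\mathcal C\ni Y'$. As $Y'\in\mathcal C$ and $XY\subseteq\ell$ is disjoint from $\mathcal C$, we get $Y'\notin XY$, whence $m\cap XY=(m\cap\ell)\cap XY=\{Y'\}\cap XY=\emptyset$. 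Applying Claim~\ref{prop:theSameOrient} to the line $m$, the points $Y',Z\in m$, and the points $X,Y\notin m$, I conclude that $\ortr{Y'}{Z}{X}$ and $\ortr{Y'}{Z}{Y}$ have the same orientation; a cyclic relabelling of vertices (which preserves orientation) turns this into $\ortr{X}{Y'}{Z}$ and $\ortr{Y}{Y'}{Z}$ having the same orientation, and the former is counterclockwise since $\ell$ is the right tangent from $X$.

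For the converse, assume $\ell$ is the right tangent from $Y$ and suppose for contradiction that some $W\in XY\cap\mathcal C$. Then $W\in\ell$, so $W\in\mathcal C\cap\ell$; moreover $W\ne X$ and $W\ne Y$ because $X,Y\notin\mathcal C$, so $W$ lies strictly between $X$ and $Y$ on $\ell$. Choose $Z\in\mathcal C\setminus\ell$ and put $m=\ell(WZ)$, which meets $\ell$ only at $W$. Since $X$ and $Y$ sit on $\ell$ on opposite sides of $W$, they sit on opposite sides of $m$, so $m\cap XY\neq\emptyset$, and Claim~\ref{prop:theSameOrient} now gives that $\ortr{W}{Z}{X}$ and $\ortr{W}{Z}{Y}$ have opposite orientations, hence so do $\ortr{X}{W}{Z}$ and $\ortr{Y}{W}{Z}$ after a cyclic relabelling. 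But $\ell$ being the right tangent from both $X$ and $Y$ forces each of $\ortr{X}{W}{Z}$ and $\ortr{Y}{W}{Z}$ to be counterclockwise, a contradiction; hence $XY\cap\mathcal C=\emptyset$.

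I do not anticipate a real obstacle here: the statement is a bookkeeping consequence of Claim~\ref{prop:theSameOrient}. The only points requiring attention are checking, in each application of that claim, that the four points involved satisfy its hypothesis (in particular that $X,Y\notin m$ and that $m\cap\ell$ is a single point), keeping the cyclic relabellings of the oriented triangles straight, and noting the harmless standing assumption $\mathcal C\not\subseteq\ell$ --- without which both $X$ and $Y$ would vacuously be right-tangent points and the equivalence could fail.
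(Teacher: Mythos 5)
Your proof is correct. The paper gives no argument for \cref{prop:tangent_on_line} at all --- it is asserted as ``a simple consequence of the above definition'' --- and your write-up is exactly the verification the authors leave implicit: both directions reduce to \cref{prop:theSameOrient} applied to the line through a contact point $Y'$ (or $W$) and a point $Z\in\mathcal C\setminus\ell$, with the orientation bookkeeping handled correctly (cyclic relabelling preserves the sign of the determinant, and $X,Y\notin m$ because $m\cap\ell$ is the single point $Y'$, which lies in $\mathcal C$ while $X,Y$ do not). Your explicit flagging of the degenerate case $\mathcal C\subseteq\ell$, where the definition of a right tangent becomes vacuous and the stated equivalence can fail, is a genuine point the paper glosses over, and assuming it away is the right reading of the claim as it is used later.
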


Let $\ell$ be an inner common tangent to two disjoint convex sets $\mathcal C_1$ and $\mathcal C_2$, and let 
$A, B$ be two points such that $A \in \mathcal{C}_1 \cap \ell$ and $B \in \mathcal{C}_2 \cap \ell$.
Then $\ell$ is called the \textit{right} (resp. \textit{left}) \textit{inner common tangent} to $\mathcal{C}_1$ and $\mathcal{C}_2$ if $\ell$ 
is the right (resp. left) tangent from $A$ to $\mathcal{C}_2$, and the right (resp. left) tangent from $B$ to $\mathcal{C}_1$
(see \cref{fig:inner_common_tangent}).
It is easy to see that any pair of disjoint convex sets has exactly 
one right and exactly one left inner common tangent.

\section{Oriented prime segments and threshold functions}
\label{sec:oriented_segments}

\begin{definition}\label{def:segment_defines_f}
Let $A$ and $B$ be two adjacent points in $\gridMN$.
We say that $\dir{AB}$ \textit{defines} a function $f: \gridMN \rightarrow \{0,1\}$ if:
\begin{enumerate}
\item $f(A) = 1, f(B) = 0$;
\item for any $X \in \gridMN \cap \ell(AB)$ we have $f(X) = 1$ if and only if $d(A,X) < d(B,X)$;
\item for any $X \in \gridMN \setminus \ell(AB)$ we have $f(X) = 1$ if and only if $\ortr{A}{B}{X}$ is a counterclockwise triangle.
\end{enumerate}
The function defined by $\dir{AB}$ will be denoted as $f_{\dir{AB}}$.
\end{definition}

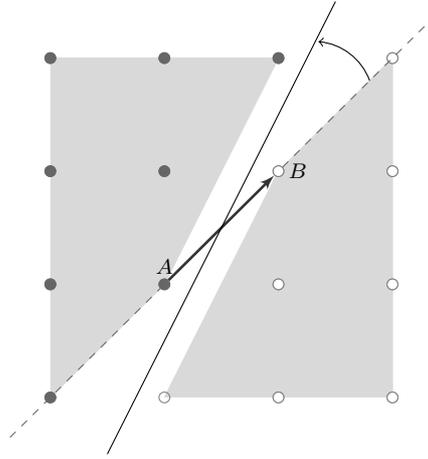
\begin{figure}
\centering
\begin{tikzpicture}[scale=1.5]

	\draw[light_grey, fill=light_grey] (0,0) -- (1,1) -- (2,3) -- (0,3);
	\draw[light_grey, fill=light_grey] (1,0) -- (2,2) -- (3,3) -- (3,0);

	\foreach \x in {0,...,3}
    	\foreach \y in {0,...,3}
    	{
    		\node[grid_point] (\x\y) at (\x,\y) {};
    	}
   
     	\foreach \y in {0,...,3}
    	{
    		\node[grid_white_point] (3\y) at (3,\y) {};
    	}
    
    	\foreach \y in {0,...,2}
    	{
    		\node[grid_white_point] (2\y) at (2,\y) {};
   	 }
    
    	\node[grid_white_point] (01) at (0,1) {};

   	 \foreach \y in {0,...,3}
   	 {
    		\node[true_point] (0\y) at (0,\y) {};
    	}
    
   	 \foreach \y in {1,...,3}
    	{
    		\node[true_point] (1\y) at (1,\y) {};
    	}
    
    \node[true_point] (23) at (2,3) {};
    	
    \node[small_text, above] at (11) {$A$};
    \node[small_text, right] at (22) {$B$};
    \draw[oriented_segment, name path=AB] (11) -- (22);
    \path (11) -- (22) coordinate[pos=-1.5](dd) coordinate[pos=2.5](ff);
	
	\draw[dashed_line, name path=lineAB] (dd) -- (11);
	\draw[dashed_line] (22) -- (ff);
	\draw[black] (0.5,-0.5) -- (2.5,3.5);	
	\draw [->, black] (2.8,2.8) arc (20:85:15pt); 
	
\end{tikzpicture}
\caption{$\protect\dir{AB}$ defines the threshold function $f$ where $\Conv(M_1(f))$ and $\Conv(M_0(f))$ are the left and right grey regions respectively. 
The black circles are the true points of $f$, the white circles are the false points of $f$.
The dashed line is the left inner common tangent to $\Conv(M_0(f))$ and $\Conv(M_1(f))$. 
The solid line is a separating line for $f$.}
\label{fig:dir_seg_definition}
\end{figure}

The following statement is an immediate consequence of \cref{def:segment_defines_f}.

\begin{claim}
\label{prop:points_on_line}
Let $\dir{AB}$ be a prime segment in $\gridMN$ and let $f = f_{\dir{AB}}$ be the function on $\gridMN$ defined by $\dir{AB}$.
Then for any $C \in \ell(AB) \cap \gridMN$ we have either $f(C) = 1$ and $A \in BC$ or $f(C) = 0$ and $B \in AC$.
\end{claim}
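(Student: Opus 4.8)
The statement to prove is Claim~\ref{prop:points_on_line}: if $\dir{AB}$ is a prime segment in $\gridMN$ and $f = f_{\dir{AB}}$, then for any $C \in \ell(AB) \cap \gridMN$ we have either $f(C) = 1$ and $A \in BC$, or $f(C) = 0$ and $B \in AC$.

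\begin{proof}
The plan is to unwind \cref{def:segment_defines_f}(2) together with elementary facts about integer points on a line. Since $C \in \ell(AB) \cap \gridMN$, part~(2) of \cref{def:segment_defines_f} applies, so $f(C) = 1$ if and only if $d(A,C) < d(B,C)$. Note first that $d(A,C) \neq d(B,C)$: the three points are collinear, and $A \neq B$, so equality would force $C$ to be the midpoint of $AB$; but $A$ and $B$ are adjacent, hence the midpoint of $AB$ is not an integer point, contradicting $C \in \gridMN$. Therefore exactly one of $f(C) = 1$ or $f(C) = 0$ holds, and it suffices to show that $d(A,C) < d(B,C)$ implies $A \in BC$, while $d(A,C) > d(B,C)$ implies $B \in AC$ (the case $C \in \{A, B\}$ being handled directly, since $d(A,A) = 0 < d(B,A)$ gives $f(A) = 1$ with $A \in BA$, and symmetrically for $B$).

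The key step is the following: since $A$, $B$, $C$ are distinct integer points (in the nontrivial case) on the line $\ell(AB)$ and $A$, $B$ are adjacent, $C$ cannot lie strictly between $A$ and $B$; so $C$ lies on one of the two rays of $\ell(AB)$ emanating from the segment $AB$. If $C$ is on the ray beyond $A$ (i.e. $A$ is between $C$ and $B$), then $d(A,C) < d(A,C) + d(A,B) = d(B,C)$, so $f(C) = 1$ and $A \in BC$, as claimed. Symmetrically, if $C$ is on the ray beyond $B$, then $d(B,C) < d(B,C) + d(A,B) = d(A,C)$, so $f(C) = 0$ and $B \in AC$. Combining with the boundary cases $C = A$ and $C = B$ completes the argument.

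The only mild subtlety — and the step I would be most careful about — is the claim that the midpoint of a prime segment is never an integer point, which is what rules out $d(A,C) = d(B,C)$; this follows because if $A = (a_1, a_2)$ and $B = (b_1, b_2)$ are adjacent then, by definition of adjacency, $AB$ contains no integer point other than $A$ and $B$, so in particular $\tfrac12(A+B)$ is not integer. (Equivalently, $b_1 - a_1$ and $b_2 - a_2$ are not both even.) Everything else is a routine case analysis on the position of $C$ along the line, using only that collinear points satisfy the triangle equality $d(A,C) + d(A,B) = d(B,C)$ precisely when $A$ lies on segment $BC$.
\end{proof}
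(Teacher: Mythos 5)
Your proof is correct; the paper itself offers no proof, stating the claim as an immediate consequence of \cref{def:segment_defines_f}, and your argument is exactly the intended unwinding of part~(2) of that definition: primality forces $C$ to lie on one of the two rays beyond $A$ or beyond $B$ (or to equal one of them), and the strict distance comparison then matches the containment $A \in BC$ or $B \in AC$. The midpoint observation you flag is sound but slightly redundant, since any integer point with $d(A,C)=d(B,C)$ would lie strictly between the adjacent points $A$ and $B$, which primality already excludes.
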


In \cite{Koplowitz} authors, in different terms, showed that a function $f_{\dir{AB}}$ defined by an oriented prime segment $\dir{AB}$ is threshold and the line $\ell(AB)$ is an inner common tangent to the convex hulls of the sets of true and false points of $f$.
For the convenience, the following theorem partly repeats the result from \cite{Koplowitz}, thus adapting it to our purposes and making our exposition self-contained.

\begin{theorem}
\label{th:dir_segment_props}
Let $A$ and $B$ be two adjacent points in $\gridMN$ and let $f=f_{\dir{AB}}$.
Then 
\begin{enumerate}
	\item[(1)] $f$ is a threshold function;
	\item[(2)] $A$ and $B$ are essential points of $f$;
	\item[(3)] $\ell(AB)$ is the left inner common tangent to $\Conv(M_1(f))$ and $\Conv(M_0(f))$.
\end{enumerate}
\end{theorem}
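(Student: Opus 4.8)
The plan is to reduce everything to the orientation machinery already developed, so that each of the three parts follows from the definition of $f_{\dir{AB}}$ together with the claims in \cref{sec:seg_tr_orientation,sec:tangents}. Throughout, write $M_1 = M_1(f)$, $M_0 = M_0(f)$, and $\ell = \ell(AB)$.

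For part (1), I would show directly that $\Conv(M_1) \cap \Conv(M_0) = \emptyset$, which by the criterion recalled in the introduction is equivalent to $f$ being threshold. The key observation is that $\ell$ separates the two sets. Indeed, by \cref{def:segment_defines_f}(3) every point of $\gridMN \setminus \ell$ lies in $M_1$ or $M_0$ according to the orientation of $\ortr{A}{B}{X}$, so all points of $M_1$ off $\ell$ lie strictly on the counterclockwise side of $\ell$ and all points of $M_0$ off $\ell$ lie strictly on the clockwise side. The only subtlety is the points of $M_1$ and $M_0$ that lie \emph{on} $\ell$: by \cref{prop:points_on_line}, a true point $C \in \ell$ satisfies $A \in BC$, and a false point $C' \in \ell$ satisfies $B \in AC'$, so on the line $\ell$ the true points and false points are separated by the open segment between $A$ and $B$ (which contains no grid point, $AB$ being prime). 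Hence one can tilt $\ell$ infinitesimally about a point strictly between $A$ and $B$ to obtain a genuine separating line; equivalently, $\Conv(M_1)$ meets $\ell$ only in the ray of $\ell$ on the $A$-side and $\Conv(M_0)$ only in the ray on the $B$-side, and these rays are disjoint, so $\Conv(M_1) \cap \Conv(M_0) = \emptyset$. This also shows $\ell$ is a tangent to each of $\Conv(M_1)$ and $\Conv(M_0)$ and separates them, i.e. $\ell$ is an inner common tangent, which we will need for (3).

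For part (2), I must exhibit, for each of $A$ and $B$, a threshold function that disagrees with $f$ only at that point. Take $g$ with $g(A) = 0$ and $g = f$ elsewhere. To see $g$ is threshold: $M_1(g) = M_1 \setminus \{A\}$ and $M_0(g) = M_0 \cup \{A\}$. Since $A$ is a vertex of $\Conv(M_1)$ lying on the tangent line $\ell$ (by part (1), $A \in M_1 \cap \ell$ and all other points of $M_1$ are on the closed counterclockwise side, with those on $\ell$ lying strictly beyond $A$ away from $B$), removing $A$ and adding it to the other side keeps the two convex hulls disjoint — one can push the separating line slightly past $A$. The symmetric argument, using that $B$ is a vertex of $\Conv(M_0)$ on $\ell$, handles $B$. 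A clean way to package this uniformly is: the line through $A$ and $B$ perturbed to pass just on the $B$-side of $A$ separates $M_1 \setminus\{A\}$ from $M_0 \cup \{A\}$; the perturbation on the $A$-side of $B$ handles the other case. I expect this to be routine once part (1) is in hand.

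For part (3), I already know from part (1) that $\ell$ is an inner common tangent to $\Conv(M_1)$ and $\Conv(M_0)$; it remains to identify it as the \emph{left} one. By the definition in \cref{sec:tangents}, I must check that $\ell$ is the left tangent from $A$ (a point of $\Conv(M_1) \cap \ell$) to $\Conv(M_0)$ and the left tangent from $B$ (a point of $\Conv(M_0) \cap \ell$) to $\Conv(M_1)$. For the first: take $Y \in \Conv(M_0) \cap \ell$ and $Z \in \Conv(M_0) \setminus \ell$; I need $\ortr{A}{Y}{Z}$ clockwise. Here $Z$ can be taken to be a false grid point off $\ell$ (extreme points of $\Conv(M_0)$ off $\ell$ are such, and the general case follows by convexity and \cref{prop:theSameOrient}), and then $\ortr{A}{B}{Z}$ is clockwise by \cref{def:segment_defines_f}(3) since $Z \in M_0$; since $Y$ lies on $\ell$ on the $B$-side of $A$ (by \cref{prop:points_on_line}, applied to the extreme false points of $\Conv(M_0)\cap\ell$, together with the fact that $B \in AY$), the segments $\dir{AB}$ and $\dir{AY}$ are collinear with the same orientation, so \cref{prop:collinear_segments_and_point} gives that $\ortr{A}{Y}{Z}$ has the same (clockwise) orientation as $\ortr{A}{B}{Z}$. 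The symmetric computation with the roles of the sets swapped, using that a true grid point $Z' \in M_1 \setminus \ell$ gives $\ortr{A}{B}{Z'}$ counterclockwise hence $\ortr{B}{A}{Z'}$ clockwise and $\dir{BA}$, $\dir{BX}$ collinear co-oriented for $X \in \Conv(M_1)\cap\ell$ on the $A$-side of $B$, shows $\ell$ is the left tangent from $B$ to $\Conv(M_1)$. The main obstacle, and the step deserving the most care, is handling the points that lie on $\ell$ — reducing statements about arbitrary points of the convex hulls on $\ell$ to the grid points $A$ and $B$ and using \cref{prop:points_on_line} to pin down which side of $A$ (resp. $B$) they lie on — since it is here that the tie-breaking convention in \cref{def:segment_defines_f}(2) interacts with the orientation definition of left versus right tangent.
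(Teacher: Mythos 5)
Your proposal is correct and follows essentially the same route as the paper: parts (1) and (2) are proved by perturbing $\ell(AB)$ slightly (about the midpoint of $AB$, resp.\ about $A$ or $B$) to obtain a separating line, and part (3) by checking the orientation condition in the definition of a left tangent directly against \cref{def:segment_defines_f}. Your extra care in reducing points of the convex hulls that lie on $\ell(AB)$ to the grid points $A$ and $B$ via \cref{prop:points_on_line} and \cref{prop:collinear_segments_and_point} only makes explicit what the paper's proof leaves implicit.
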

\begin{proof}
First we prove (1).
Indeed, if we consider the line $\ell(AB)$ and turn it counterclockwise slightly around the middle of the segment $AB$ to not intersect any integer points then we obtain a separating line for $f$, hence $f$ is a threshold function (see  \cref{fig:dir_seg_definition}).

Let us now prove (2).
Consider the line $\ell(AB)$ and turn it counterclockwise slightly around the point $A$ to not intersect any integer points except $A$.
The obtained line separates $M_1(f) \setminus \{A\}$ and $M_0(f) \cup \{A\}$, 
and witnesses that the function that differs from $f$ in the unique point $A$ is threshold.
Therefore, the point $A$ is essential for $f$.
Similarly, one can show that $B$ is also essential for $f$.

Now we prove (3).
First, it is easy to see that $\ell(AB)$ is a tangent to both $\Conv(M_1(f))$ and $\Conv(M_0(f))$.
Furthermore, since $\Conv(M_1(f))$ and $\Conv(M_0(f))$ are separated by $\ell(AB)$, we conclude that $\ell(AB)$ is an inner common tangent for $\Conv(M_1(f))$ and $\Conv(M_0(f))$.
Now, by \cref{def:segment_defines_f}, for any $X \in M_1(f) \setminus \ell(AB)$ the triangle $\ortr{B}{A}{X}$ is clockwise, and for any $X \in M_2(f) \setminus \ell(AB)$ the triangle $\ortr{A}{B}{X}$ is clockwise.
Hence, $\ell(AB)$ is a left tangent from $B$ to $\Conv(M_1(f))$ and from $A$ to $\Conv(M_2(f))$, i.e. $\ell(AB)$ is the left inner common tangent for $\Conv(M_1(f))$ and $\Conv(M_0(f))$.
\end{proof}

In \cite{Koplowitz} authors also proved a bijection between oriented prime segments and non-constant threshold functions:

\begin{theorem}[\cite{Koplowitz}]
There is one-to-one correspondence between oriented prime segments in $\gridMN$ and non-constant threshold functions on $\gridMN$.
\end{theorem}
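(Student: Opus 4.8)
The plan is to establish the bijection by exhibiting a map in each direction and showing they are mutually inverse. Given Definition~\ref{def:segment_defines_f} and \cref{th:dir_segment_props}(1), the assignment $\dir{AB} \mapsto f_{\dir{AB}}$ already sends oriented prime segments to threshold functions; since $f_{\dir{AB}}(A)=1$ and $f_{\dir{AB}}(B)=0$, the image is never constant, so this gives a well-defined map from oriented prime segments to non-constant threshold functions. The work is therefore to show this map is (i) surjective — every non-constant threshold function arises this way — and (ii) injective — distinct oriented prime segments yield distinct functions.

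For surjectivity, I would start from a non-constant threshold function $f$ on $\gridMN$. Then $\Conv(M_1(f))$ and $\Conv(M_0(f))$ are nonempty and disjoint, so by the remark at the end of \cref{sec:tangents} they admit a unique left inner common tangent $\ell$. The goal is to produce an oriented prime segment $\dir{AB}$ on $\ell$ with $A \in M_1(f)$, $B \in M_0(f)$, $A,B$ adjacent, and $f = f_{\dir{AB}}$. The natural choice is to take $A$ to be the point of $M_1(f)\cap\ell$ closest (in the appropriate direction along $\ell$) to $M_0(f)$, and $B$ the point of $M_0(f)\cap\ell$ closest to $M_1(f)$; one must check these are adjacent (no grid point strictly between them on $\ell$, using that $\ell$ separates the two sets and any intermediate grid point would lie in one of the convex hulls on the wrong side) and that orientation conventions match, i.e. that $\ell$ being the \emph{left} inner common tangent forces condition~3 of Definition~\ref{def:segment_defines_f} with the correct counterclockwise sense. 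Verifying $f = f_{\dir{AB}}$ then reduces to checking the three conditions of Definition~\ref{def:segment_defines_f} pointwise, using \cref{prop:theSameOrient} to compare orientations of $\ortr{A}{B}{X}$ against the side of $\ell$ on which $X$ lies, and \cref{prop:points_on_line} together with a distance comparison for points of $\gridMN$ lying on $\ell$ itself.

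For injectivity, suppose $\dir{AB}$ and $\dir{CD}$ are oriented prime segments with $f_{\dir{AB}} = f_{\dir{CD}} =: f$. By \cref{th:dir_segment_props}(3), both $\ell(AB)$ and $\ell(CD)$ are the left inner common tangent to $\Conv(M_1(f))$ and $\Conv(M_0(f))$; by uniqueness of that tangent, $\ell(AB) = \ell(CD) =: \ell$. Now $A$ and $C$ both lie in $M_1(f)\cap\ell$ and $B,D$ both in $M_0(f)\cap\ell$; by \cref{prop:points_on_line}, $A$ (resp.\ $C$) is the unique point of $M_1(f)\cap\ell$ whose neighbour on $\ell$ toward $M_0(f)$ is a false point, which pins $A = C$, and symmetrically $B = D$. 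Hence $\dir{AB} = \dir{CD}$, giving injectivity, and together with surjectivity the one-to-one correspondence follows.

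The main obstacle is the surjectivity direction, specifically the adjacency claim: after selecting $A \in M_1(f)\cap\ell$ and $B \in M_0(f)\cap\ell$ as the mutually closest points, one must rule out an intervening integer point on the segment $AB$. Such a point would lie on $\ell$ between $A$ and $B$; it is a point of $\gridMN$ with a definite $f$-value, and whichever value it has contradicts the minimality in the choice of $A$ or $B$ — but making this airtight requires care that $\ell \cap \gridMN$ behaves well and that the "closest in direction along $\ell$" ordering is the right one, which is where I expect the bulk of the technical effort to go. The orientation bookkeeping (matching "left tangent" to "counterclockwise" in Definition~\ref{def:segment_defines_f}) is a secondary fussy point that the claims of \cref{sec:seg_tr_orientation} are designed to handle.
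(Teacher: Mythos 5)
The paper offers no proof of this statement at all: it is imported verbatim from \cite{Koplowitz} as a cited result, and the paper only proves the surrounding machinery (\cref{th:dir_segment_props}) and then records the surjectivity half as \cref{lem:dir_seg_exist}. So there is no ``paper proof'' to match your argument against; what follows is an assessment of your proposal on its own terms.

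Your plan is sound and is the natural way to prove the theorem from the tools the paper sets up. The forward map is handled correctly by \cref{th:dir_segment_props}(1) plus the observation $f_{\dir{AB}}(A)\neq f_{\dir{AB}}(B)$. For injectivity, \cref{th:dir_segment_props}(3) together with the uniqueness of the left inner common tangent (asserted at the end of \cref{sec:tangents}) forces $\ell(AB)=\ell(CD)$, and \cref{prop:points_on_line} applied to each segment on that common line does pin down $A=C$ and $B=D$; this part is essentially complete. For surjectivity, your construction (take the left inner common tangent $\ell$ to $\Conv(M_1(f))$ and $\Conv(M_0(f))$, and the mutually closest points $A\in M_1(f)\cap\ell$, $B\in M_0(f)\cap\ell$) works: the tangent meets each hull in a vertex, hence in a lattice point of the corresponding set; any integer point strictly between $A$ and $B$ lies in $\gridMN$ (the grid is the set of all lattice points of its convex hull) and so carries an $f$-value contradicting the minimality of $d(A,B)$, which gives adjacency; and the left-tangent convention translates, via the definition of left tangent from $B$ to $\Conv(M_1(f))$, exactly into condition~3 of \cref{def:segment_defines_f}. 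The points on $\ell$ itself are handled because $\Conv(M_1(f))\cap\ell$ and $\Conv(M_0(f))\cap\ell$ are disjoint subsegments of $\ell$ with $A$ and $B$ as their facing endpoints. The two places you flag as needing care are indeed the only places needing care, and both go through; I see no gap in the approach, only details left to be written out.
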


\begin{corollary}
\label{lem:dir_seg_exist}
Let $f$ be a non-constant threshold function on $\gridMN$.
Then there exists a unique prime segment $AB$ with $A,B \in \gridMN$ such that $f = f_{\dir{AB}}$.
\end{corollary}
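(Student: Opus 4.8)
The statement to prove is \cref{lem:dir_seg_exist}, which asserts that every non-constant threshold function $f$ on $\gridMN$ equals $f_{\dir{AB}}$ for a unique prime segment $AB$.

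My plan is to derive this directly as a corollary of the preceding bijection theorem (Theorem with \cite{Koplowitz} attribution), which states there is a one-to-one correspondence between oriented prime segments in $\gridMN$ and non-constant threshold functions. The content of \cref{lem:dir_seg_exist} is essentially a restatement of surjectivity and injectivity of the map $\dir{AB} \mapsto f_{\dir{AB}}$, combined with \cref{th:dir_segment_props}(1) which guarantees that this map indeed lands in the set of threshold functions (and a quick observation that $f_{\dir{AB}}$ is non-constant, since $f(A)=1$ and $f(B)=0$ by \cref{def:segment_defines_f}). So the proof is short.

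First I would invoke the bijection theorem: the correspondence $\dir{AB} \mapsto f_{\dir{AB}}$ is a one-to-one correspondence between the set of oriented prime segments in $\gridMN$ and the set of non-constant threshold functions on $\gridMN$. Since $f$ is a non-constant threshold function, surjectivity of this correspondence yields an oriented prime segment $\dir{AB}$ with $f = f_{\dir{AB}}$; in particular $A$ and $B$ are adjacent points of $\gridMN$, so $AB$ is a prime segment with the required property. Next, for uniqueness, suppose $AB$ and $A'B'$ are prime segments in $\gridMN$ with $f = f_{\dir{AB}} = f_{\dir{A'B'}}$. Then $\dir{AB}$ and $\dir{A'B'}$ are oriented prime segments mapped to the same non-constant threshold function $f$, so by injectivity of the correspondence $\dir{AB} = \dir{A'B'}$, whence $A = A'$, $B = B'$ and the segments coincide.

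I do not anticipate a real obstacle here, since the hard work is already packaged in the cited bijection theorem; the only small point to be careful about is matching up the formulations — namely, noting that "oriented prime segment $\dir{AB}$" determines the unordered "prime segment $AB$" together with an orientation, and that the statement of \cref{lem:dir_seg_exist} only asks for a prime segment $AB$ (with the understanding that $f = f_{\dir{AB}}$ pins down which of $A$, $B$ plays the role of the true endpoint). Here is the proof.

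\begin{proof}
By \cref{th:dir_segment_props}(1), for any adjacent points $A,B \in \gridMN$ the function $f_{\dir{AB}}$ is threshold, and by \cref{def:segment_defines_f} we have $f_{\dir{AB}}(A) = 1 \neq 0 = f_{\dir{AB}}(B)$, so $f_{\dir{AB}}$ is non-constant. Hence the assignment $\dir{AB} \mapsto f_{\dir{AB}}$ is a well-defined map from the set of oriented prime segments in $\gridMN$ to the set of non-constant threshold functions on $\gridMN$, and by the preceding theorem this map is a one-to-one correspondence.

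Since $f$ is a non-constant threshold function, surjectivity provides an oriented prime segment $\dir{AB}$ with $f = f_{\dir{AB}}$; in particular $A$ and $B$ are adjacent points of $\gridMN$, so $AB$ is a prime segment with $A, B \in \gridMN$ and $f = f_{\dir{AB}}$, as required.

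For uniqueness, suppose $AB$ and $A'B'$ are prime segments with $A,B,A',B' \in \gridMN$ and $f = f_{\dir{AB}} = f_{\dir{A'B'}}$. Then $\dir{AB}$ and $\dir{A'B'}$ are oriented prime segments with the same image $f$ under the above one-to-one correspondence, so $\dir{AB} = \dir{A'B'}$, which gives $A = A'$ and $B = B'$, and therefore the prime segments $AB$ and $A'B'$ coincide.
\end{proof}
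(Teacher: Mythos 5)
Your proof is correct and matches the paper's intent: the paper gives no explicit proof of this corollary, treating it as an immediate consequence of the cited bijection theorem, and your argument simply spells out the surjectivity (existence) and injectivity (uniqueness) steps that the paper leaves implicit. The only mild interpretive step you take — that the one-to-one correspondence in the Koplowitz theorem is realized by the map $\dir{AB} \mapsto f_{\dir{AB}}$ — is clearly the intended reading, since the corollary would not follow otherwise.
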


\section{Proper pairs of oriented prime segments}
\label{sec:pair_oriented_segments}

The defining threshold functions via oriented prime segments can be naturally extended to $2$-threshold functions.

\begin{definition}
We say that a pair of oriented prime segments $\dir{AB}, \dir{CD}$ in $\gridMN$ \textit{defines} a $2$-threshold function $f$ on $\gridMN$ if
$$
f = f_{\dir{AB}} \land f_{\dir{CD}}.
$$
\end{definition}

However, the proposed representation of a $2$-threshold function has two drawbacks.
First, although the points $A,B$ and $C,D$ are essential for $f_{\dir{AB}}$ and $f_{\dir{CD}}$ respectively, they are not necessarily essential for $f= f_{\dir{AB}} \land f_{\dir{CD}}$.
The following example demonstrates this fact.

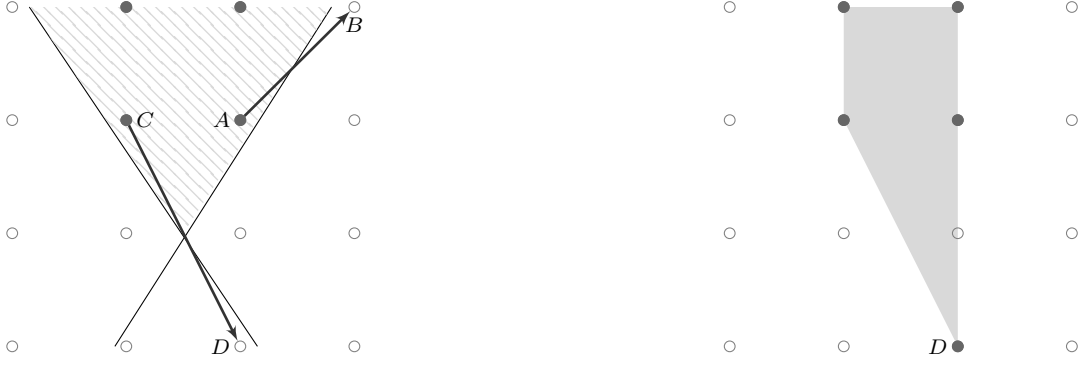
\begin{figure}
\begin{subfigure}[t]{0.45\textwidth}
\captionsetup{aboveskip=20pt}
\centering
\begin{tikzpicture}[scale=1.5]

   	
   	\draw (0.9,0) -- (2.8,3);
   	\draw (0.15,3) -- (2.15, 0);
   	
   	\pattern[pattern=north west hatch, hatch distance=2mm, pattern color = light_grey, hatch thickness=.5pt] (1.5,1) -- (2.8,3) -- (0.15,3);   

	\foreach \x in {0,...,3}
    	\foreach \y in {0,...,3}
    	{
    		\node[grid_point] (\x\y) at (\x,\y) {};
    	}
    	
    \node[true_point] at (1,2) {};
    \node[true_point] at (1,3) {};
    \node[true_point] at (2,2) {};
   	\node[true_point] at (2,3) {};
   	

   	\node[small_text, left] (A) at (2,2) {$A$};	
   	\node[small_text, below] (B) at (3,3) {$B$};	
    \node[small_text, right] (C) at (1,2) {$C$};	
   	\node[small_text, left] (D) at (2,0) {$D$};	
    \draw[oriented_segment, name path=AB] (22) -- (33);
    \draw[oriented_segment, name path=CD] (12) -- (20);

\end{tikzpicture}
\caption{The black and white points are the true and false points of $f$ respectively.
The pair of segments $\{\dir{AB},\dir{CD}\}$ defines $f$, i.e. $f = f_{\protect\dir{AB}} \land f_{\protect\dir{CD}}$.
The solid lines are separating lines of $f_{\protect\dir{AB}}$ and $f_{\protect\dir{CD}}$.
All integer points inside the stripped region and only them are the true points of $f$.}
\label{fig:non-essential-f}
\end{subfigure}\hfill
\begin{subfigure}[t]{0.45\textwidth}
\captionsetup{aboveskip=20pt}
\centering
\begin{tikzpicture}[scale=1.5]


   	\pattern[light_grey] (2,0) -- (1,2) -- (1,3) -- (2,3);   

	\foreach \x in {0,...,3}
    	\foreach \y in {0,...,3}
    	{
    		\node[grid_point] (\x\y) at (\x,\y) {};
    	}
    	
    \node[true_point] at (1,2) {};
    \node[true_point] at (1,3) {};
    \node[true_point] at (2,2) {};
   	\node[true_point] at (2,3) {};
   	\node[true_point] at (2,0) {};
    	
    \node[small_text, left] (D) at (2,0) {$D$};	
    	
\end{tikzpicture}
\caption{The black and white points are the true and false points of $g$ respectively.
The grey region is $\Conv(M_1(g))$.}
\label{fig:non-essential-g}
\end{subfigure}
\caption{A $2$-threshold function $f$ and the function $g$ that only differs from $f$ in the point $D$.}
\label{fig:non-essential}
\end{figure}

\begin{example}
Consider a $2$-threshold function $f$ on $\grid_{3,3}$ defined by a pair of segments $\{\dir{AB},\dir{CD}\}$, where $A=(2,2),B=(3,3),C=(1,2),D=(2,0)$ (see \cref{fig:non-essential-f}), and the function $g: \grid_{3,3} \rightarrow \{0,1\}$ such that $f$ and $g$ only differ in the point $D$.
The function $g$ is not $k$-threshold for any $k$ as the convex hull of its true points contains the false point $(2,1)$ (see \cref{fig:non-essential-g}).
Hence, the point $D$ is not essential for $f$.
\end{example}

\noindent
Second, the same $2$-threshold function can be expressed as the conjunction of different pairs of threshold functions, therefore its representation by pairs of oriented prime segments is not unique.

However, we may impose some restrictions on the pairs of oriented prime segments to exclude redundant pairs of segments defining the same function and guarantee that the endpoints of the segments are essential for the given function.

\begin{definition}
We say that a pair of oriented segments $\dir{AB}, \dir{CD}$ is \emph{proper} if the segments are prime and 
$$
f_{\dir{CD}}(A) = f_{\dir{CD}}(B) = f_{\dir{AB}}(C) = f_{\dir{AB}}(D) = 1.
$$
\end{definition}

The rest of the section is devoted to the analysis and properties of \superb pairs of segments.
We start by showing that the endpoints of the segments in a \superb pair of segments are essential for the function defined by this pair.

\begin{theorem}
\label{th:proper-is-essential}
Let $\{\dir{AB},\dir{CD}\}$ be a \superb pair of segments defining a $2$-threshold function $f$.
Then $A,B,C,D$ are essential points of $f$.
\end{theorem}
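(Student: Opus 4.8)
\section*{Proof proposal}

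The plan is to use the essentiality of the endpoints for the individual threshold functions, established in \cref{th:dir_segment_props}(2), and to lift it to essentiality for the conjunction $f = f_{\dir{AB}} \wedge f_{\dir{CD}}$ by exploiting the properness hypothesis. By symmetry it suffices to prove that $A$ and $B$ are essential for $f$; the essentiality of $C$ and $D$ then follows by exchanging the roles of the two segments.

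Consider first the point $A$. By \cref{th:dir_segment_props}(2), $A$ is essential for the threshold function $f_{\dir{AB}}$, so there is a threshold function $g_1$ on $\gridMN$ with $g_1(A) \neq f_{\dir{AB}}(A)$ and $g_1(X) = f_{\dir{AB}}(X)$ for all $X \in \gridMN \setminus \{A\}$; since $f_{\dir{AB}}(A) = 1$ by \cref{def:segment_defines_f}, this means $g_1(A) = 0$. Put $g = g_1 \wedge f_{\dir{CD}}$. As $g_1$ and $f_{\dir{CD}}$ are threshold (the latter by \cref{th:dir_segment_props}(1)), $g$ is a $2$-threshold function. I claim $g$ differs from $f$ exactly at $A$. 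For $X \neq A$ we have $g(X) = g_1(X) \wedge f_{\dir{CD}}(X) = f_{\dir{AB}}(X) \wedge f_{\dir{CD}}(X) = f(X)$. At the point $A$, since $f_{\dir{AB}}(A) = 1$ and $f_{\dir{CD}}(A) = 1$ by properness, we get $f(A) = 1$, whereas $g(A) = g_1(A) \wedge f_{\dir{CD}}(A) = 0 \wedge 1 = 0 \neq f(A)$. Hence $g$ witnesses that $A$ is essential for $f$.

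The argument for $B$ is identical up to one sign change. Here $B$ is essential for $f_{\dir{AB}}$, so there is a threshold function $g_2$ with $g_2 = f_{\dir{AB}}$ off $B$ and $g_2(B) = 1$ (because $f_{\dir{AB}}(B) = 0$). Setting $g = g_2 \wedge f_{\dir{CD}}$, again $g$ agrees with $f$ away from $B$, while $f(B) = f_{\dir{AB}}(B) \wedge f_{\dir{CD}}(B) = 0 \wedge 1 = 0$ using properness, and $g(B) = g_2(B) \wedge f_{\dir{CD}}(B) = 1 \wedge 1 = 1 \neq f(B)$. Thus $B$ is essential for $f$, and interchanging $\dir{AB}$ with $\dir{CD}$ yields the same conclusion for $C$ and $D$.

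There is no substantial technical obstacle here; the one point that genuinely must be used is that properness forces $f_{\dir{CD}}(A) = f_{\dir{CD}}(B) = 1$, which is exactly what makes the value of the conjunction $f$ at $A$ (resp.\ $B$) coincide with the value of $f_{\dir{AB}}$ there, so that a local modification of $f_{\dir{AB}}$ at that point propagates to the same local modification of $f$. Without properness this can fail, as the example preceding the theorem illustrates.
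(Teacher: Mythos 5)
Your proof is correct and follows essentially the same route as the paper's: modify $f_{\dir{AB}}$ at the relevant endpoint (legitimate by \cref{th:dir_segment_props}(2)), take the conjunction with $f_{\dir{CD}}$, and use properness ($f_{\dir{CD}}(A)=f_{\dir{CD}}(B)=1$) to see that the modification survives in the conjunction. The only difference is cosmetic: the paper writes out the case of $A$ and declares the others similar, while you also spell out $B$ explicitly.
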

\begin{proof}
We will prove that $A$ is an essential point of $f$, for the points $B$, $C$, and $D$ the proof is similar.
Let $f'$ be the function which differs from $f_{\dir{AB}}$ in the unique point $A$, i.e. $f'(A) = 0$ and $f'(X) = f_{\dir{AB}}(X)$ for all $X \neq A$.
By \cref{th:dir_segment_props}, the point $A$ is essential for $f$, and hence $f'$ is a threshold function.
Then $f'' = f' \land f_{\dir{CD}}$ is a $2$-threshold function that coincides with $f$ in all points except $A$, as $f''(A) = 0$ and $f(A) = 1$, therefore $A$ is an essential point for $f$.
\end{proof}

The above theorem shows that a $2$-threshold function $f$ can be characterized by a partially ordered set of 4 essential points, while the representation of $f$ by its specifying set may require $9$ or even $\Theta(mn)$ points (see \cite{Zamaraeva2017}).
The proposed representation is also more space efficient than the representation of~$f$ by $\Conv(M_1(f))$ as a convex polygon, the former requires $O(\log(m+n))$ memory space while the latter needs $O\left((m+n)^{\frac{2}{3}} \log(m+n)\right)$ memory space \cite{Zunic1995}.

\begin{claim}
\label{prop:segment_end_points_enqualities}
Let $\{\dir{AB},\dir{CD}\}$ be a \superb pair of segments.
Then $A\neq D, C \neq B$, and $B \neq D$.
\end{claim}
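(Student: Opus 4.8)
The plan is to derive all three inequalities directly from the defining conditions of a proper pair together with item~1 of \cref{def:segment_defines_f}. Recall that by \cref{def:segment_defines_f} we have $f_{\dir{AB}}(A)=1$, $f_{\dir{AB}}(B)=0$, $f_{\dir{CD}}(C)=1$, and $f_{\dir{CD}}(D)=0$, while properness of $\{\dir{AB},\dir{CD}\}$ gives $f_{\dir{CD}}(A)=f_{\dir{CD}}(B)=f_{\dir{AB}}(C)=f_{\dir{AB}}(D)=1$. Each of the three assertions then follows by comparing a value that must equal $1$ with a value that must equal $0$.

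Concretely, I would argue by contradiction in each case. If $A=D$, then $f_{\dir{CD}}(A)=f_{\dir{CD}}(D)=0$, contradicting $f_{\dir{CD}}(A)=1$; hence $A\neq D$. If $C=B$, then $f_{\dir{AB}}(C)=f_{\dir{AB}}(B)=0$, contradicting $f_{\dir{AB}}(C)=1$; hence $C\neq B$. Finally, if $B=D$, then $f_{\dir{AB}}(D)=f_{\dir{AB}}(B)=0$, contradicting $f_{\dir{AB}}(D)=1$; hence $B\neq D$.

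There is no real obstacle here: the statement is an immediate bookkeeping consequence of the definitions, and the proof is a few lines. The only thing to be careful about is invoking the correct value assignments — the "built-in" values $f_{\dir{AB}}(A)=1$, $f_{\dir{AB}}(B)=0$ (and the analogues for $\dir{CD}$) coming from \cref{def:segment_defines_f}, versus the cross-conditions $f_{\dir{CD}}(A)=f_{\dir{CD}}(B)=f_{\dir{AB}}(C)=f_{\dir{AB}}(D)=1$ coming from properness — and pairing them up correctly in each of the three cases.
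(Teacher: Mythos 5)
Your proof is correct and follows essentially the same route as the paper, which derives the three inequalities from $f_{\dir{CD}}(A)\neq f_{\dir{CD}}(D)$, $f_{\dir{AB}}(C)\neq f_{\dir{AB}}(B)$, and $f_{\dir{AB}}(B)\neq f_{\dir{AB}}(D)$. Your version just spells out the same comparison of the built-in values from \cref{def:segment_defines_f} with the properness conditions in slightly more detail.
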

\begin{proof}
The statement follows from the inequalities $\begin{aligned}f_{\dir{CD}}(A) \neq f_{\dir{CD}}(D)\end{aligned}$, $f_{\dir{AB}}(C) \neq f_{\dir{AB}}(B)$, and $f_{\dir{AB}}(B) \neq f_{\dir{AB}}(D)$.
\end{proof}

The following theorem provides the criteria for a pair of oriented prime segments to be proper.

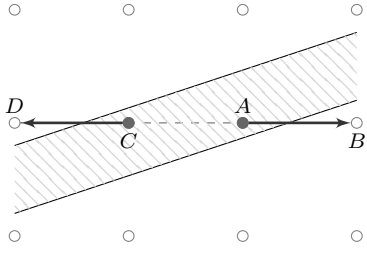
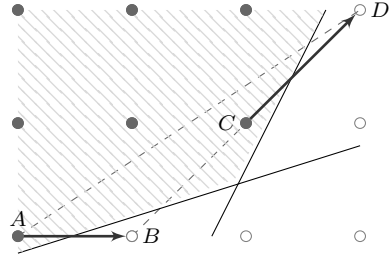
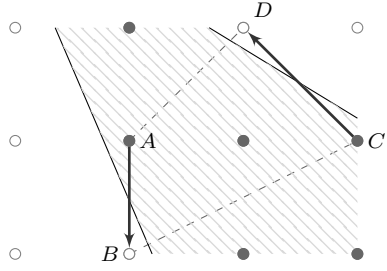
\begin{figure}
\begin{subfigure}[t]{0.48\textwidth}
\captionsetup{aboveskip=20pt}
\centering
\begin{tikzpicture}[scale=1.5]

    	\draw[dashed_line] (0,1) -- (3,1);
    	\draw (0,0.8) -- (3,1.8);
    	\draw (0,0.2) -- (3,1.2);
    	
    \pattern[pattern=north west hatch, hatch distance=2mm, pattern color = light_grey, hatch thickness=.5pt] (0,0.2) -- (0,0.8) -- (3,1.8) -- (3,1.2);

	\foreach \x in {0,...,3}
    	\foreach \y in {0,...,2}
    	{
    		\node[grid_white_point] (\x\y) at (\x,\y) {};
    	}
    	
    	\node[true_point] at (1,1) {};
    	\node[true_point] at (2,1) {};

    	\node[small_text, above] (A) at (2,1) {$A$};	
    	\node[small_text, below] (B) at (3,1) {$B$};	
    	\node[small_text, below] (C) at (1,1) {$C$};	
    	\node[small_text, above] (D) at (0,1) {$D$};	
    	\draw[oriented_segment, name path=AB] (21) -- (31);
    	\draw[oriented_segment, name path=CD] (11) -- (01);

\end{tikzpicture}
\caption{$AC \subset BD$}
\label{fig:AC_in_BD}
\end{subfigure}
\begin{subfigure}[t]{0.48\textwidth}
\captionsetup{aboveskip=20pt}
\centering
\begin{tikzpicture}[scale=1.5]

    \draw[dashed_line, name path=CD] (0,0) -- (3,2);
    \draw[dashed_line, name path=CD] (1,0) -- (2,1);
    	
    \draw (0,-0.15) -- (3,0.8);
    \draw (1.7,0) -- (2.7,2);
    \pattern[pattern=north west hatch, hatch distance=2mm, pattern color = light_grey, hatch thickness=.5pt] (0,-0.15) -- (1.9,0.5) -- (2.7,2) -- (0,2);

	\foreach \x in {0,...,3}
    	\foreach \y in {0,...,2}
    	{
    		\node[grid_point] (\x\y) at (\x,\y) {};
    	}
    	
	\node[grid_white_point] (10) at (1,0) {};    	
    	
    	\foreach \x in {0,...,2}
    	{
    		\node[true_point] at (\x,2) {};
   	}
    	\foreach \x in {0,...,2}
    	{
    		\node[true_point] at (\x,1) {};
    	}
    	
    	\node[true_point] at (0,0) {};

    	\node[small_text, above] (A) at (0,0) {$A$};	
    	\node[small_text, right] (B) at (1,0) {$B$};	
    	\node[small_text, left] (C) at (2,1) {$C$};	
    	\node[small_text, right] (D) at (3,2) {$D$};	
    	\draw[oriented_segment, name path=AB] (00) -- (10);
    	\draw[oriented_segment, name path=CD] (21) -- (32);

\end{tikzpicture}
\caption{$C \in BD$ and $\protect\ortr{A}{B}{D}$ is a counterclockwise triangle}
\label{fig:C_in_BD}
\end{subfigure}
\par\bigskip\bigskip\bigskip
\begin{subfigure}[t]{1\textwidth}
\captionsetup{aboveskip=20pt}
\centering
\begin{tikzpicture}[scale=1.5]

    \draw[dashed_line] (1,0) -- (3,1);
   	\draw[dashed_line] (1,1) -- (2,2);
   	
   	\draw (1.2,0) -- (0.35,2);
   	\draw (3,1.2) -- (1.7, 2);
   	
   	\pattern[pattern=north west hatch, hatch distance=2mm, pattern color = light_grey, hatch thickness=.5pt] (1.2,0) -- (0.35,2) -- (1.7,2) -- (3,1.2) -- (3,0);   

	\foreach \x in {0,...,3}
    	\foreach \y in {0,...,2}
    	{
    		\node[grid_point] (\x\y) at (\x,\y) {};
    	}
    	
    	\node[true_point] at (1,1) {};
    	\node[true_point] at (1,2) {};
   	\node[true_point] at (2,0) {};
   	\node[true_point] at (2,1) {};
   	\node[true_point] at (3,1) {};
   	\node[true_point] at (3,0) {};
   	
   	\node[grid_white_point] (10) at (1,0) {};
   	\node[grid_white_point] (22) at (2,2) {};

   	\node[small_text, right] (A) at (1,1) {$A$};	
   	\node[small_text, left] (B) at (1,0) {$B$};	
    	\node[small_text, right] (C) at (3,1) {$C$};	
   	\node[small_text, above right] (D) at (2,2) {$D$};	
    	\draw[oriented_segment, name path=AB] (11) -- (10);
    	\draw[oriented_segment, name path=CD] (31) -- (22);

\end{tikzpicture}
\caption{$\dir{ABCD}$ is a convex counterclockwise quadrilateral}
\label{fig:ABCD_quadruple}
\end{subfigure}
\caption{The black and white points are the true and false points of $f = f_{\protect\dir{AB}} \protect\land f_{\protect\dir{CD}}$ respectively where $\{\protect\dir{AB},\protect\dir{CD}\}$ is a \superb pair of segments.
The solid lines are separating lines of threshold functions $f_{\protect\dir{AB}}$ and $f_{\protect\dir{CD}}$.
All integer points inside the stripped region and only them are the true points of $f$.}
\end{figure}

\begin{theorem}
\label{th:superb_rectangle}
The pair of prime segments $\dir{AB},\dir{CD}$ is \superb if and only if one of the following holds:
\begin{enumerate}
\item[(1)] $AC \subset BD$;
\item[(2)] $A \in BD$ and $\ortr{C}{D}{B}$ is a counterclockwise triangle or $C \in BD$ and $\ortr{A}{B}{D}$ is a counterclockwise triangle;
\item[(3)] $\dir{ABCD}$ is a counterclockwise quadrilateral.
\end{enumerate}
\end{theorem}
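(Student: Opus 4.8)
The plan is to prove both directions simultaneously by a case analysis on how many of $A,B,C,D$ are collinear. The first step is to unfold the definition: the pair is \superb precisely when $f_{\dir{AB}}(C)=f_{\dir{AB}}(D)=f_{\dir{CD}}(A)=f_{\dir{CD}}(B)=1$, and by \cref{def:segment_defines_f} together with \cref{prop:points_on_line} each of these four equalities is equivalent either to a statement about the orientation of a triangle (when the tested point lies off the line carrying the other segment) or to a betweenness statement on that line. Throughout, $A\neq B$ and $C\neq D$ by primeness; the coincidences $B=D$, $A=D$, $B=C$ are degenerate and are disposed of by direct inspection using primeness, leaving $A=C$ as the only nontrivial coincidence (consistent with \cref{prop:segment_end_points_enqualities}).

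In the case where no three of $A,B,C,D$ are collinear, we have $C,D\notin\ell(AB)$ and $A,B\notin\ell(CD)$, so the four equalities read: $\ortr{A}{B}{C}$, $\ortr{A}{B}{D}$, $\ortr{C}{D}{A}$, $\ortr{C}{D}{B}$ are all counterclockwise. Since orientation is invariant under cyclic permutation of the vertices, this is the same as requiring $\ortr{A}{B}{C}$, $\ortr{B}{C}{D}$, $\ortr{C}{D}{A}$, $\ortr{D}{A}{B}$ to be counterclockwise, which by \cref{prop:convex_rectangle} (and, for the converse, the remark preceding it) is equivalent to $\dir{ABCD}$ being a counterclockwise quadrilateral, i.e.\ condition~(3). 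Conditions (1) and (2) each put three of the points on a common line, so neither can hold here.

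When all four points are collinear, \cref{prop:points_on_line} turns the four equalities into $A\in BC$, $A\in BD$, $C\in AD$, $C\in BD$; the inclusions $A\in BC$ and $C\in AD$ already force the order of the points along the line to be $B,A,C,D$ (with $A=C$ permitted), and in that order the remaining two inclusions are automatic, while the only alternative ($B,C,A,D$) would place the integer point $C$ strictly inside the prime segment $AB$ and is impossible. The order $B,A,C,D$ is exactly condition~(1), and (2),(3) fail since all four points are collinear. When exactly three points are collinear and the collinear triple is $\{A,B,C\}$ or $\{A,C,D\}$, the four equalities turn out to be contradictory: after unfolding them, \cref{prop:collinear_segments_and_point} applied to the two prime segments lying on the common line forces a single triangle on the fourth point to be simultaneously counterclockwise and clockwise; hence the pair is not \superb, and none of (1)--(3) can hold (each would demand a different collinear triple, or convex position). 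Finally suppose the collinear triple is $\{A,B,D\}$ (the triple $\{B,C,D\}$ then reduces to this case upon exchanging $\dir{AB}$ and $\dir{CD}$, an exchange under which the \superb condition is symmetric, and gives the second alternative of (2)). Here $f_{\dir{AB}}(D)=1$ is equivalent to $A\in BD$ and $f_{\dir{CD}}(B)=1$ to $\ortr{C}{D}{B}$ being counterclockwise, and I claim the other two equalities are then automatic: since $A\in BD$ we get $D\notin AB$, so $AB$ is disjoint from $\ell(CD)$ (which meets $\ell(AB)$ only in $D$), and \cref{prop:theSameOrient} yields that $\ortr{C}{D}{A}$ is counterclockwise like $\ortr{C}{D}{B}$; moreover $\dir{DA}$ and $\dir{AB}$ are collinear with the same orientation (again because $A\in BD$), so \cref{prop:collinear_segments_and_point} promotes $\ortr{C}{D}{A}$, equivalently $\ortr{D}{A}{C}$, being counterclockwise to $\ortr{A}{B}{C}$ being counterclockwise. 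Thus in this sub-case the pair is \superb if and only if the first alternative of (2) holds, and combining all cases proves the theorem.

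The step I expect to be the main obstacle is the last one, the collinear triple $\{A,B,D\}$: the crux is that two of the four defining equalities become redundant given the other two, and pinning this down requires combining the betweenness data with \cref{prop:theSameOrient}, \cref{prop:collinear_segments_and_point}, and the sign reversal of an orientation under transposing two vertices of a triangle. A secondary point requiring care is to read ``$\subset$'' in (1) and ``$\in$'' in (2) appropriately (strict containment, resp.\ membership with the point distinct from the endpoints) so that the three alternatives on the right-hand side are mutually exclusive and match the three cases; it is primeness of the two segments that makes this reading the correct one.
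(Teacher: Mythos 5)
Your argument is correct and follows essentially the same route as the paper: the paper's proof is exactly this unfolding of the four defining equalities via \cref{def:segment_defines_f} and \cref{prop:points_on_line}, followed by a case analysis on the degeneracy of the configuration, packaged as \cref{lem:superb_rectangle_1,lem:superb_rectangle_2,lem:superb_rectangle_3} according to whether $\Conv(\{A,B,C,D\})$ is a segment, a triangle, or a quadrilateral. Your generic case is the paper's \cref{lem:superb_rectangle_3} verbatim, your all-collinear case is \cref{lem:superb_rectangle_1}, and your redundancy argument for the triple $\{A,B,D\}$ (two of the four equalities follow from the other two via \cref{prop:theSameOrient} and \cref{prop:collinear_segments_and_point}) is the converse direction of \cref{lem:superb_rectangle_2}; folding the configuration ``one point strictly interior to the triangle of the other three'' into the no-three-collinear case, where \cref{prop:convex_rectangle} kills it at once, is a mild streamlining of the paper's explicit cases 1 and 2 of \cref{lem:superb_rectangle_2}. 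The one configuration your trichotomy does not formally reach is $A=C$ with $B$, $A$, $D$ not collinear --- a legitimate pair of prime segments sharing an endpoint, cf.\ \cref{cor:singleton_from_segments}: you park $A=C$ at the outset as the only admissible coincidence but only revisit it inside the all-collinear branch, and once $A=C$ the notion of ``which triple is collinear'' degenerates. It is a one-line fix (there $f_{\dir{AB}}(D)=1$ and $f_{\dir{CD}}(B)=1$ would require $\ortr{A}{B}{D}$ and $\ortr{A}{D}{B}$ to be simultaneously counterclockwise, which is impossible, and none of (1)--(3) can hold since they all force either collinearity of $B,A,D$ or four distinct points), but as written that case is uncovered; the paper's decomposition by the number of hull vertices absorbs it automatically into the triangle case.
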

\begin{proof}
Clearly $\Conv(\{A,B,C,D\})$ has at least 2 and at most 4 vertices.
The proof of the theorem is split up into Lemmas \ref{lem:superb_rectangle_1}, \ref{lem:superb_rectangle_2}, and \ref{lem:superb_rectangle_3} according to the number of vertices of $\Conv(\{A,B,C,D\})$.
\end{proof}

The following lemmas treat the cases where $\Conv(\{A,B,C,D\})$ is a segment, triangle, and quadrilateral.

\begin{lemma}
\label{lem:superb_rectangle_1}
A pair of collinear prime segments $\dir{AB},\dir{CD}$ is \superb if and only if  $AC \subset BD$;
\end{lemma}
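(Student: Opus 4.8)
The plan is to work entirely on the common line $\ell = \ell(AB) = \ell(CD)$ and translate the defining conditions of ``proper'' into statements about the linear order of $A,B,C,D$ on $\ell$. Since $A,B,C,D$ all lie on $\ell$, Claim \ref{prop:points_on_line} is the right tool: for a prime segment $\dir{AB}$ and any grid point $X \in \ell$, we have $f_{\dir{AB}}(X)=1$ exactly when $A \in BX$, and $f_{\dir{AB}}(X)=0$ exactly when $B \in AX$. So $f_{\dir{AB}}(X)=1$ says precisely that, walking along $\ell$, the point $A$ lies strictly between $B$ and $X$ (or $X=A$); equivalently $X$ is on the ``$A$-side'' of $B$, strictly past $A$ or equal to $A$. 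The pair being proper means $f_{\dir{CD}}(A)=f_{\dir{CD}}(B)=1$ and $f_{\dir{AB}}(C)=f_{\dir{AB}}(D)=1$.

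First I would fix a coordinate (a linear parametrization) on $\ell$ and, without loss of generality, orient it so that $\dir{AB}$ points in the positive direction; write $a<b$ for the coordinates of $A,B$ (strict, since $A\neq B$), and $c,d$ for those of $C,D$. The condition $f_{\dir{AB}}(C)=1$ then unwinds, via Claim \ref{prop:points_on_line}, to $c \le a$ (i.e. $A \in BC$ or $C=A$), and likewise $f_{\dir{AB}}(D)=1$ gives $d \le a$; but by Claim \ref{prop:segment_end_points_enqualities} we have $A \neq D$, so in fact $d < a$, while $c \le a$. Here I must also use that $C \ne D$ forces $\dir{CD}$ to be a genuine oriented segment; combined with $c,d \le a < b$, the two conditions $f_{\dir{CD}}(A)=1$ and $f_{\dir{CD}}(B)=1$ say that $A$ and $B$ both lie on the ``$C$-side past $C$'' of $D$. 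Since $a<b$ and both are $\ge$ the coordinate of one endpoint of $\dir{CD}$ and $\le$ nothing in particular, orientation-consistency forces $\dir{CD}$ to also point in the positive direction, i.e. $d<c$, with $c \le a$ and $b \le$ no upper constraint from $B$ alone — rather, $f_{\dir{CD}}(B)=1$ gives $C \in DB$, i.e. $d < c \le b$, which is automatic once $c \le a < b$. Collecting: $d < c \le a < b$, which is exactly the statement that the segment $AC$ (possibly degenerate to a point when $c=a$) is contained in $BD$, i.e. $AC \subset BD$. Running the argument in reverse — starting from $d<c\le a<b$ — recovers all four equalities $f_{\dir{CD}}(A)=f_{\dir{CD}}(B)=f_{\dir{AB}}(C)=f_{\dir{AB}}(D)=1$ by the same applications of Claim \ref{prop:points_on_line}, giving the converse.

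The main subtlety — and the step I expect to require the most care — is handling the boundary/degenerate cases cleanly: whether $AC$ is allowed to degenerate to a single point (which happens precisely when $C=A$, and this is permitted since the ``proper'' conditions never force $A\neq C$), and ruling out the reversed orientations $c<d$ or $a$ between $c$ and $d$. The latter is where one genuinely needs the oriented version of Claim \ref{prop:points_on_line}: a non-oriented ``collinear and nested'' hypothesis would not pin down the directions of the arrows, but the four function-value equalities, each of which is an \emph{asymmetric} betweenness condition ($A\in BX$ rather than merely ``$X \in \ell(AB)$ with something between''), do. I would present this as a short case check on the relative order of the four coordinates, using $A\neq D$ from Claim \ref{prop:segment_end_points_enqualities} to discard the one remaining ambiguous configuration, and then state the equivalence with $AC\subset BD$.
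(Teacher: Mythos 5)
Your argument is correct and takes essentially the same route as the paper: both reduce the four defining equalities to Claim~\ref{prop:points_on_line} and to betweenness relations among $A,B,C,D$ on the common line (the paper's forward direction is just a bit quicker, observing that $f_{\dir{AB}}(D)=f_{\dir{CD}}(B)=1$ alone already give $A,C\in BD$, hence $AC\subset BD$ by convexity). One slip to correct: with your own convention that $\dir{AB}$ pointing positively means $a<b$, the inequality $d<c$ that you derive says $\dir{CD}$ points in the \emph{negative} direction (from its true endpoint $C$ back toward $D$), not the positive one --- the inequality you actually use is right, but the accompanying description is not.
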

\begin{proof}
Let $\{\dir{AB},\dir{CD}\}$ be a \superb pair of collinear prime segments (see  \cref{fig:AC_in_BD}). 
Then using \cref{prop:points_on_line} we derive
from $f_{\dir{AB}}(D)=f_{\dir{CD}}(B) = 1$ the inclusion $A,C \in BD$.

Conversely, let $\{\dir{AB},\dir{CD}\}$ be a pair of collinear prime segments with $AC \subset BD$.
The primality of the segments implies that $A \in BC$ and $C \in AD$.
Therefore, by \cref{prop:points_on_line}, we have $f_{\dir{AB}}(C) = f_{\dir{CD}}(A)=f_{\dir{AB}}(D) = f_{\dir{CD}}(B) = 1$, 
and hence the pair $\{\dir{AB},\dir{CD}\}$ is proper, as required.
\end{proof}

\begin{lemma}
\label{lem:superb_rectangle_2}
Let $\{\dir{AB},\dir{CD}\}$ be a pair of prime segments such that \linebreak $\Conv(\{A,B,C,D\})$ is a triangle.
Then the pair is \superb if and only if either $\ortr{C}{D}{B}$ is a counterclockwise triangle with $A \in BD$ or $\ortr{A}{B}{D}$ 
is a counterclockwise triangle with $C \in BD$.
\end{lemma}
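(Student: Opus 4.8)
The plan is to prove the two implications separately, organising the argument around the question of whether some endpoint of one segment lies on the line spanned by the other segment. I will use freely that, for a proper pair, the points $A,B$ and $C,D$ are distinct and, by \cref{prop:segment_end_points_enqualities}, $A\neq D$, $C\neq B$, $B\neq D$; and I will exploit the symmetry that exchanges the two segments: the hypothesis of the lemma and the disjunction of the two listed alternatives are both invariant under the relabelling $A\leftrightarrow C$, $B\leftrightarrow D$, which interchanges the two alternatives.

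\emph{Necessity.} Suppose the pair is proper and $\Conv(\{A,B,C,D\})$ is a triangle. The first step is to observe that at least one of $C,D$ lies on $\ell(AB)$, or at least one of $A,B$ lies on $\ell(CD)$: if not, then \cref{def:segment_defines_f} and the four equalities defining properness make $\ortr{A}{B}{C},\ortr{A}{B}{D},\ortr{C}{D}{A},\ortr{C}{D}{B}$ all counterclockwise, so, after cyclic rotation, $\ortr{A}{B}{C},\ortr{B}{C}{D},\ortr{C}{D}{A},\ortr{D}{A}{B}$ are all counterclockwise, and \cref{prop:convex_rectangle} forces $\Conv(\{A,B,C,D\})$ to be a quadrilateral — a contradiction. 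By the symmetry I may assume a point of $\{C,D\}$ lies on $\ell(AB)$, and since $\Conv(\{A,B,C,D\})$ is a triangle exactly one of $C,D$ does (if both did, $\ell(CD)=\ell(AB)$ and all four points would be collinear).

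If $C\in\ell(AB)$: then $f_{\dir{AB}}(C)=1$ and \cref{prop:points_on_line} give $A\in BC$, so $\dir{AB}$ and $\dir{CB}$ are collinear with the same orientation; since $D\notin\ell(AB)$, \cref{prop:collinear_segments_and_point} then gives that $\ortr{A}{B}{D}$ and $\ortr{C}{B}{D}$ have the same orientation. But $f_{\dir{AB}}(D)=1$ forces $\ortr{A}{B}{D}$ to be counterclockwise, while $f_{\dir{CD}}(B)=1$ (note $B\notin\ell(CD)$, since otherwise $\ell(CD)=\ell(AB)$) forces $\ortr{C}{D}{B}$ counterclockwise, i.e. $\ortr{C}{B}{D}$ clockwise — a contradiction. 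Hence $D\in\ell(AB)$, and then $f_{\dir{AB}}(D)=1$ with \cref{prop:points_on_line} gives $A\in BD$, while $B\notin\ell(CD)$ and $f_{\dir{CD}}(B)=1$ give that $\ortr{C}{D}{B}$ is a counterclockwise triangle. This is the first alternative; the second is recovered by applying the symmetry.

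\emph{Sufficiency.} Suppose $\Conv(\{A,B,C,D\})$ is a triangle and, by the symmetry, that $A\in BD$ and $\ortr{C}{D}{B}$ is a counterclockwise triangle. Nondegeneracy of this triangle gives $C\notin\ell(BD)=\ell(AB)$, and hence $A,B\notin\ell(CD)$ as well (if $A$ or $B$ lay on $\ell(CD)$, that line would contain $D$ and a second point of $\ell(BD)$, hence coincide with $\ell(AB)$, putting $C$ on $\ell(AB)$). Now I verify the four equalities of properness. For the point $D$, which lies on $\ell(AB)$, \cref{prop:points_on_line} together with $A\in BD$ gives $f_{\dir{AB}}(D)=1$. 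For the points $C$ and $A$, which lie off the relevant lines, I propagate orientations along the collinear triple $B,A,D$ via \cref{prop:collinear_segments_and_point}: $\dir{BA}$ and $\dir{BD}$ have the same orientation and $C\notin\ell(BD)$, so $\ortr{B}{A}{C}$ and $\ortr{B}{D}{C}$ have the same orientation, whence $\ortr{A}{B}{C}$ is counterclockwise (because $\ortr{C}{D}{B}$ is, and $\ortr{B}{D}{C}$ is $\ortr{C}{D}{B}$ with its order reversed), so $f_{\dir{AB}}(C)=1$; analogously $\dir{DA}$ and $\dir{DB}$ have the same orientation, which yields $\ortr{C}{D}{A}$ counterclockwise and $f_{\dir{CD}}(A)=1$. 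Finally $f_{\dir{CD}}(B)=1$ holds directly, since $B\notin\ell(CD)$ and $\ortr{C}{D}{B}$ is counterclockwise. Thus the pair is proper.

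The step I expect to require the most care is the orientation bookkeeping: turning each condition ``$f_{\dir{\cdot\cdot}}(\cdot)=1$'' into a statement about a triangle's orientation (splitting on whether the point is on the relevant line, via \cref{def:segment_defines_f} and \cref{prop:points_on_line}), and then moving that orientation from one base or apex to a collinear one using \cref{prop:collinear_segments_and_point} (or \cref{prop:theSameOrient}), all while keeping track of which of $A,B,C,D$ lies on which of the lines $\ell(AB),\ell(CD)$. The few remaining degenerate coincidences among the four points I would dispose of in a line and do not treat in this sketch.
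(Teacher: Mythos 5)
Your proof is correct, and its sufficiency half is essentially the paper's argument: read $f_{\dir{AB}}(D)=1$ off \cref{prop:points_on_line}, then transport the orientation of $\ortr{C}{D}{B}$ along the collinear triple $B,A,D$ via \cref{prop:collinear_segments_and_point} to obtain the remaining three equalities. The necessity half is organised genuinely differently. The paper splits into four cases according to which of the four points fails to be a vertex of $\Conv(\{A,B,C,D\})$ (namely $D\in \tr{A}{B}{C}$, $B\in \tr{C}{D}{A}$, $C\in \tr{A}{B}{D}$, $A\in \tr{C}{D}{B}$), rules out the first two by a separation argument, and in the surviving cases still has to prove that the non-vertex point lies on the segment $BD$, which it does by contradiction using \cref{prop:theSameOrient}. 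You instead split on line incidences: you first invoke \cref{prop:convex_rectangle} to show that if no endpoint of either segment lay on the line of the other, the four points would be in convex position, contradicting the triangle hypothesis; once some incidence is forced, \cref{prop:points_on_line} hands you $A\in BD$ (or $C\in BD$) directly, and the subcases $C\in\ell(AB)$ and $A\in\ell(CD)$ die by an orientation clash. This reuses \cref{prop:convex_rectangle}, which is needed anyway for the quadrilateral case, and shortens the ``locate the fourth point'' step; the paper's version keeps the three lemmas behind \cref{th:superb_rectangle} stylistically uniform. One caveat, shared with the paper's own proof: in the sufficiency direction both arguments tacitly assume $A\neq D$ (resp.\ $C\neq B$). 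With $A=D$ (e.g.\ $A=D=(0,0)$, $B=(1,0)$, $C=(0,1)$) the conditions of the first alternative hold under the inclusive reading of $A\in BD$, yet $f_{\dir{CD}}(A)=f_{\dir{CD}}(D)=0$, so this particular ``degenerate coincidence'' cannot simply be disposed of in a line --- it must be excluded (as it is in the necessity direction by \cref{prop:segment_end_points_enqualities}).
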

\begin{proof}
First assume $\begin{aligned}\{\dir{AB},\dir{CD}\}\end{aligned}$ is a \superb pair of prime segments with \linebreak $\Conv(\{A,B,C,D\})$ being a triangle.
There are four cases to consider:
\begin{enumerate}
\item $D \in \ortr{A}{B}{C}$.
We claim that this case is impossible.
Indeed, if $D$ belongs to the triangle $\ortr{A}{B}{C}$, then $D$ belongs neither to $BC$ nor to $AC$, as otherwise, by \cref{prop:points_on_line}, at least one of $f_{\dir{CD}}(A)$ and $f_{\dir{CD}}(B)$ would be zero, contradicting the assumption that 
$\{\dir{AB},\dir{CD}\}$ is proper.
Therefore, $\ell(CD)$ separates $A$ and $B$, which contradicts $f_{\dir{CD}}(A) = f_{\dir{CD}}(B)$.

\item $B \in \ortr{C}{D}{A}$. 
This case is impossible by similar arguments as in case 1.

\item $C \in \ortr{A}{B}{D}$. We show in this case that $\ortr{A}{B}{D}$ is a counterclockwise triangle and $C \in BD$ (see \cref{fig:C_in_BD}).
The former follows from $f_{\dir{AB}}(D) = 1$. 
To prove the latter, suppose to the contrary that $C \notin BD$.
Then $\ell(BD)$ does not intersect $AC$, and hence, by \cref{prop:theSameOrient}, the orientations of the triangles $\ortr{B}{D}{C}$
and $\ortr{B}{D}{A}$ are the same. Since the orientation of $\ortr{B}{D}{A}$ is the same as that of $\ortr{A}{B}{D}$, we conclude that the orientation 
of $\ortr{B}{C}{D}$ is counterclockwise, and therefore the orientation of $\ortr{C}{D}{B}$ is clockwise, which contradicts $f_{\dir{CD}}(B) = 1$.

\item $A \in \ortr{C}{D}{B}$. In this case arguments similar to the analysis of case 3 show that $\ortr{C}{D}{B}$ is a counterclockwise triangle and $A \in BD$.
\end{enumerate}

Assume now that $\{\dir{AB},\dir{CD}\}$ is a pair of prime segments such that $\ortr{A}{B}{D}$ is a counterclockwise triangle and $C \in BD$.
The case where $\ortr{C}{D}{B}$ is a counterclockwise triangle with  $A \in BD$ is symmetric and we omit the details.
Since $C \in BD$, the orientation of $\ortr{A}{B}{C}$ and $\ortr{C}{D}{A}$ is the same as the orientation of $\ortr{A}{B}{D}$, i.e. counterclockwise.
Consequently, $f_{\dir{AB}}(D) = f_{\dir{AB}}(C) = f_{\dir{CD}}(A) = 1$. 
Furthermore, by \cref{prop:points_on_line}, we have $f_{\dir{CD}}(B) = 1$, and therefore the pair $\{\dir{AB},\dir{CD}\}$ is proper.
\end{proof}

\begin{lemma}
\label{lem:superb_rectangle_3}
Let $\{\dir{AB},\dir{CD}\}$ be a pair of prime segments, such that $A,B,C,$ and $D$ are in convex position.
Then the pair is \superb if and only if $AB$, $BC$, $CD$, $DA$ are edges of  $\Conv(\{A,B,C,D\})$ and the orientation of $\dir{ABCD}$ is counterclockwise.
\end{lemma}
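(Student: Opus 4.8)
The plan is to argue both implications using \cref{prop:convex_rectangle} and \cref{prop:points_on_line}, exploiting that when $A,B,C,D$ are in convex position the only free parameter is the cyclic order of the four points around their convex hull and the orientation. I will first settle the forward direction. Suppose $\{\dir{AB},\dir{CD}\}$ is a proper pair with $A,B,C,D$ in convex position. Since none of $A,B,C,D$ lies on a line through two of the others in a way that forces a collinearity (the convex position hypothesis), each of the four defining equalities $f_{\dir{CD}}(A)=f_{\dir{CD}}(B)=f_{\dir{AB}}(C)=f_{\dir{AB}}(D)=1$ is, by clause (3) of \cref{def:segment_defines_f}, an orientation statement: $\ortr{A}{B}{C}$, $\ortr{A}{B}{D}$, $\ortr{C}{D}{A}$, $\ortr{C}{D}{B}$ are all counterclockwise. (I should first check that none of these triangles is degenerate: e.g. if $A$ were on $\ell(CD)$ then $A \in CD$ by primality of $CD$, contradicting convex position; similarly for the others.) From $\ortr{A}{B}{C}$ and $\ortr{A}{B}{D}$ both counterclockwise, $C$ and $D$ lie on the same side of $\ell(AB)$, so $AB$ is an edge of the hull; symmetrically $CD$ is an edge. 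With $AB$ and $CD$ two disjoint edges of a convex quadrilateral, the cyclic order of the vertices is either $A,B,C,D$ or $A,B,D,C$. The second option would make $BC$ and $AD$ the diagonals, so $\ell(BC)$ separates $A$ from $D$; but then exactly one of $\ortr{A}{B}{C}$ and... — more cleanly: in cyclic order $A,B,D,C$ the diagonal is $BD$... I will instead rule out $A,B,D,C$ directly by noting that it forces $\ortr{C}{D}{A}$ and $\ortr{C}{D}{B}$ to have opposite orientations (since $A$ and $B$ are then separated by $\ell(CD)$), contradicting that both are counterclockwise. Hence the cyclic order is $A,B,C,D$, i.e. $AB,BC,CD,DA$ are the edges; and since $\ortr{A}{B}{C}$ is counterclockwise, $\dir{ABCD}$ is counterclockwise.

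For the converse, suppose $AB,BC,CD,DA$ are the edges of $\Conv(\{A,B,C,D\})$ and $\dir{ABCD}$ is counterclockwise. Then each of $\ortr{A}{B}{C}$, $\ortr{B}{C}{D}$, $\ortr{C}{D}{A}$, $\ortr{D}{A}{B}$ is counterclockwise (this is the standard fact stated just before \cref{prop:convex_rectangle}). In particular $\ortr{A}{B}{C}$ and $\ortr{A}{B}{D}$ are counterclockwise, so $f_{\dir{AB}}(C)=f_{\dir{AB}}(D)=1$ by \cref{def:segment_defines_f}, and $\ortr{C}{D}{A}$ and $\ortr{C}{D}{B}$ are counterclockwise, so $f_{\dir{CD}}(A)=f_{\dir{CD}}(B)=1$; also none of $A,B,C,D$ lies on $\ell(AB)$ or $\ell(CD)$ other than the defining endpoints, since they are vertices of a quadrilateral with those as edges, so clause (3) of the definition applies and there is no need for clause (2). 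Therefore the pair is proper.

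The main obstacle I expect is the bookkeeping in the forward direction: correctly translating each of the four equalities into the correct orientation (watching the order of the arguments, e.g. $f_{\dir{CD}}(A)=1$ gives $\ortr{C}{D}{A}$ counterclockwise, not $\ortr{A}{C}{D}$), ruling out the "crossed" cyclic order $A,B,D,C$ cleanly, and dispatching the degeneracy checks that let us invoke clause (3) rather than clause (2) of \cref{def:segment_defines_f}. Once the orientation dictionary is set up, \cref{prop:convex_rectangle} does the rest almost mechanically. A minor point to handle is that \cref{prop:convex_rectangle} is stated as an implication from the four triangle orientations to the quadrilateral structure, so for the converse I will cite the remark immediately preceding it (that the edge/orientation structure of $\dir{ABCD}$ yields the common orientation of the four corner triangles), which is the direction I actually need.
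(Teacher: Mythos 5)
Your proposal follows essentially the same route as the paper: translate the four properness equalities into the statements that $\ortr{A}{B}{C}$, $\ortr{A}{B}{D}$, $\ortr{C}{D}{A}$, $\ortr{C}{D}{B}$ are counterclockwise (clause (3) of \cref{def:segment_defines_f} applies because convex position forbids any three of the points from being collinear), then invoke \cref{prop:convex_rectangle} for the forward direction and the remark preceding it for the converse; after noting that $\ortr{A}{B}{D}$ and $\ortr{D}{A}{B}$, and likewise $\ortr{C}{D}{B}$ and $\ortr{B}{C}{D}$, are cyclic permutations and hence co-oriented, this is exactly the paper's proof. Two local points need repair. First, your justification for non-degeneracy (``if $A$ were on $\ell(CD)$ then $A \in CD$ by primality'') is not valid---primality says nothing about integer points of $\ell(CD)$ outside the segment---but the conclusion is immediate anyway: three collinear points cannot all be vertices of $\Conv(\{A,B,C,D\})$. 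Second, your hand-rolled elimination of the crossed cyclic order $A,B,D,C$ is wrong as stated: in that order $CD$ is an \emph{edge} of the quadrilateral, so $A$ and $B$ are not separated by $\ell(CD)$; the actual contradiction is that $\ortr{C}{D}{A}$ and $\ortr{C}{D}{B}$ then have the same orientation as each other but opposite to that of $\ortr{A}{B}{C}$ and $\ortr{A}{B}{D}$. Since you also propose to let \cref{prop:convex_rectangle} handle this step ``mechanically,'' the simplest fix is to drop the by-hand case analysis and cite that claim directly, as the paper does.
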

\begin{proof}

First let $\{\dir{AB},\dir{CD}\}$ be a \superb pair of prime segments.
It follows from $f_{\dir{AB}}(C)=f_{\dir{AB}}(D)=f_{\dir{CD}}(A) = f_{\dir{CD}}(B)= 1$ that the triangles $\ortr{A}{B}{C}$, $\ortr{A}{B}{D}$, $\ortr{C}{D}{A}$,
and $\ortr{C}{D}{B}$ are counterclockwise. 
Therefore, by \cref{prop:convex_rectangle}, $AB$, $BC$, $CD$, $DA$ are edges of $\Conv(\{A,B,C,D\})$ and the orientation of $\dir{ABCD}$ is counterclockwise, as required (see \cref{fig:ABCD_quadruple}).

Conversely, let $\dir{ABCD}$ be a counterclockwise quadrilateral.
By definition, the triangles $\ortr{A}{B}{C}$, $\ortr{B}{C}{D}$, $\ortr{C}{D}{A}$, $\ortr{D}{A}{B}$ are counterclockwise.
Therefore 
\[f_{\dir{CD}}(B) = f_{\dir{CD}}(A) = f_{\dir{AB}}(C) = f_{\dir{AB}}(D) = 1,\] 
and hence the pair $\{\dir{AB},\dir{CD}\}$ is proper.
\end{proof}

\cref{th:superb_rectangle} implies a sequence of useful statements about $2$-threshold functions.
The first of them proves that the convex hulls of the sets of true and false points of a function defined by a \superb pair of segments intersect, and hence the function is not threshold.
In other words, a $2$-threshold function defined by a pair of oriented segments is \textit{proper} whenever the pair is \textit{proper}.

\begin{claim}
\label{cor:zeros_ones_intersection}
Let $\{\dir{AB},\dir{CD}\}$ be a \superb pair of segments.
Then $AC \cap BD \neq \emptyset$ and $f = f_{\dir{AB}} \andl f_{\dir{CD}}$ is a proper $2$-threshold function.
\end{claim}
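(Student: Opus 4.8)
The plan is to read the conclusion straight off the case analysis in \cref{th:superb_rectangle}, after first recording which of the four points are true and which are false for $f$. From \cref{def:segment_defines_f} and the definition of a \superb pair we get $f_{\dir{AB}}(A)=f_{\dir{CD}}(A)=1$ and $f_{\dir{AB}}(C)=f_{\dir{CD}}(C)=1$, so $A,C\in M_1(f)$; on the other hand $f_{\dir{AB}}(B)=0$ and $f_{\dir{CD}}(D)=0$, so $B,D\in M_0(f)$. Hence $AC\subseteq\Conv(M_1(f))$ and $BD\subseteq\Conv(M_0(f))$, and therefore $AC\cap BD\subseteq\Conv(M_1(f))\cap\Conv(M_0(f))$. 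Once we show $AC\cap BD\neq\emptyset$ it follows that $\Conv(M_1(f))\cap\Conv(M_0(f))\neq\emptyset$, so by the characterization of threshold functions recalled in the introduction $f$ is not threshold; being the conjunction of the two threshold functions $f_{\dir{AB}}$ and $f_{\dir{CD}}$, it is then a proper $2$-threshold function.

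It remains to verify $AC\cap BD\neq\emptyset$, and for this I would go through the three cases of \cref{th:superb_rectangle}. In case (1) we have $AC\subset BD$, so $AC\cap BD=AC\neq\emptyset$. In case (2), in either alternative one of the points $A,C$ lies on $BD$; since that point also lies on $AC$, it belongs to $AC\cap BD$ (the orientation hypotheses of case (2) are not needed for this conclusion). In case (3), $\dir{ABCD}$ is a convex quadrilateral with edges $AB,BC,CD,DA$, so $AC$ and $BD$ are its two diagonals, which meet inside the quadrilateral.

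I do not expect a genuine obstacle here; the only step that deserves a line of explanation is the elementary claim in case (3) that the diagonals of a convex quadrilateral intersect. This can be argued with \cref{prop:theSameOrient}: from the counterclockwise orientations of $\ortr{D}{A}{B}$ and $\ortr{B}{C}{D}$ one checks that $\ortr{B}{D}{A}$ and $\ortr{B}{D}{C}$ have opposite orientations, hence $AC$ crosses $\ell(BD)$; symmetrically $BD$ crosses $\ell(AC)$; and since two non-parallel lines meet in exactly one point, the two crossing points coincide and lie in $AC\cap BD$. Everything else is bookkeeping with \cref{def:segment_defines_f}.
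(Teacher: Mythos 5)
Your proposal is correct and follows essentially the same route as the paper: record that $A,C\in M_1(f)$ and $B,D\in M_0(f)$, deduce non-thresholdness from $AC\cap BD\neq\emptyset$, and verify the non-empty intersection by the three-case analysis of \cref{th:superb_rectangle}. The only difference is that you spell out the elementary fact that the diagonals of a convex quadrilateral intersect, which the paper takes for granted.
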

\begin{proof}
By \cref{th:superb_rectangle}, for a \superb pair of segments $\{\dir{AB},\dir{CD}\}$ one of the following statements is true:
\begin{enumerate}
\item[(1)] $AC \subset BD$; in this case $AC \cap BD = AC$.
\item[(2)] $A \in BD$ and $\ortr{C}{D}{B}$ is a counterclockwise triangle or $C \in BD$ and $\ortr{A}{B}{D}$ is counterclockwise triangle; then $AC \cap BD = A$ or $AC \cap BD = C$ respectively.
\item[(3)] $\dir{ABCD}$ is a convex counterclockwise quadrilateral, hence $AC$ and $BD$ are diagonals, and therefore they intersect.
\end{enumerate}
In all cases we have $AC \cap BD \neq \emptyset$, as required.
Since $A,C \in M_1(f)$ and $B,D \in M_0(f)$, we have $\Conv(M_1(f)) \cap \Conv(M_0(f)) \neq \emptyset$, and the function $f$ is not threshold
\end{proof}

\begin{figure}
\centering
\begin{tikzpicture}[scale=1]
	\foreach \x in {0,...,4}
    	\foreach \y in {0,...,3}
    	{
    		\node[grid_point] (\x\y) at (\x,\y) {};
    	}

    	\node[small_text, above] (A) at (1,1) {$B$};	
    	\node[small_text, above] (B) at (3,1) {$D$};	
    	\node[small_text, above] (C) at (2,1) {$A=C$};	
   	\draw[oriented_segment, name path=AB] (21) -- (11);
    	\draw[oriented_segment, name path=CD] (11) -- (31);
    	
    	\node[true_point] at (2,1) {};
    
   	\node[grid_white_point] (11) at (1,1) {};
   	\node[grid_white_point] (31) at (3,1) {};

\end{tikzpicture}
\caption{The function $f$ is true in the unique point $A=C$ and can be defined by a pair of segments~$\{\protect\dir{AB},\protect\dir{CD}\}$.}
\label{fig:singleton}
\end{figure}
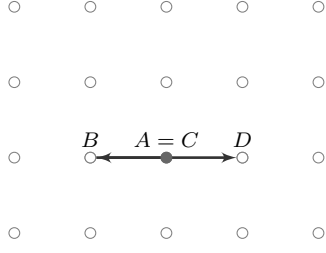

\begin{corollary}
\label{cor:threshold_no_dir_seg}
Every \textup{proper} pair of oriented segments in $\gridMN$ defines a \textup{proper} 2-threshold function on~$\gridMN$.
\end{corollary}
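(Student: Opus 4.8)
The plan is to read the statement off the results just established, since this corollary is essentially a repackaging of \cref{th:dir_segment_props} and \cref{cor:zeros_ones_intersection} in the language of a pair of segments \emph{defining} a function. First I would fix a \superb pair $\{\dir{AB},\dir{CD}\}$ of oriented prime segments in $\gridMN$ and put $f := f_{\dir{AB}} \wedge f_{\dir{CD}}$; the task is then to verify that $f$ is a $2$-threshold function and that it is not threshold.

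That $f$ is $2$-threshold is immediate from part~(1) of \cref{th:dir_segment_props}: both $f_{\dir{AB}}$ and $f_{\dir{CD}}$ are threshold functions, so $f$ is the conjunction of (at most) two threshold functions, hence $2$-threshold by definition; in particular the pair $\{\dir{AB},\dir{CD}\}$ indeed \emph{defines} the $2$-threshold function $f$. That $f$ is not threshold is precisely the final assertion of \cref{cor:zeros_ones_intersection}, which follows from $AC \cap BD \neq \emptyset$ together with $A, C \in M_1(f)$ and $B, D \in M_0(f)$, forcing $\Conv(M_1(f)) \cap \Conv(M_0(f)) \neq \emptyset$ and so, by the convex-hull criterion for threshold functions, $f$ is not a threshold function. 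Combining the two facts gives that $f$ is $2$-threshold but not threshold, i.e.\ a \textup{proper} $2$-threshold function, as claimed; note that this also covers the degenerate situation of \cref{fig:singleton}, where $A=C$ and $f$ is true at a single point.

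There is no genuine obstacle here: the only point needing care is that the hypotheses invoked — primality of both segments for \cref{th:dir_segment_props}, and properness of the pair for \cref{cor:zeros_ones_intersection} — are exactly what is assumed, so both results apply without any modification.
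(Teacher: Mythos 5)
Your proposal is correct and follows exactly the paper's route: the corollary is stated without a separate proof precisely because it is the immediate combination of \cref{th:dir_segment_props}(1) (each $f_{\dir{AB}}$, $f_{\dir{CD}}$ is threshold, so the conjunction is $2$-threshold) with the final assertion of \cref{cor:zeros_ones_intersection} (the conjunction is not threshold). Your write-up just makes this explicit, including correctly noting that the degenerate case $M_1(f)=\{A\}$ is already covered by the same argument.
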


\begin{corollary}
\label{cor:all_ones_on_line}
Let $\{\dir{AB},\dir{CD}\}$ be a \superb pair of collinear segments that define a $2$-threshold function $f$ on $\gridMN$.
Then $M_1(f) = AC \cap \gridMN$ (see \cref{fig:AC_in_BD}).
\end{corollary}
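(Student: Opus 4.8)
The plan is to analyze $f = f_{\dir{AB}} \andl f_{\dir{CD}}$ separately on the common line $\ell := \ell(AB) = \ell(CD)$ and off it. First I would pin down the order of the four points on $\ell$: since the pair is \superb, applying \cref{prop:points_on_line} to $f_{\dir{AB}}(C) = 1$ yields $A \in BC$, and applying it to $f_{\dir{CD}}(A) = 1$ yields $C \in DA$; hence the points occur on $\ell$ in the order $B, A, C, D$, with the possible coincidence $A = C$ (this is consistent with the inclusion $AC \subset BD$ from \cref{lem:superb_rectangle_1}). In particular $\dir{AB}$ and $\dir{CD}$ point in opposite directions along $\ell$, equivalently $\dir{AB}$ and $\dir{DC}$ are co-oriented collinear segments.

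Next I would show $M_1(f) \subseteq \ell \cap \gridMN$. For $X \in \gridMN \setminus \ell$, \cref{prop:collinear_segments_and_point} applied to the co-oriented segments $\dir{AB}$ and $\dir{DC}$ gives that $\ortr{A}{B}{X}$ and $\ortr{D}{C}{X}$ have the same orientation; swapping the last two vertices, $\ortr{C}{D}{X}$ then has the opposite orientation to $\ortr{A}{B}{X}$, and neither of these triangles is degenerate. By part (3) of \cref{def:segment_defines_f}, exactly one of $f_{\dir{AB}}(X)$ and $f_{\dir{CD}}(X)$ equals $1$, so $f(X) = 0$.

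It remains to treat $X \in \ell \cap \gridMN$. Here \cref{prop:points_on_line} gives $f_{\dir{AB}}(X) = 1 \iff A \in BX$ and $f_{\dir{CD}}(X) = 1 \iff C \in DX$. Choosing an affine coordinate on $\ell$ in which the four points sit as $B < A \le C < D$ (the strict inequalities at the ends hold because $\dir{AB}$ and $\dir{CD}$ are prime segments on which $f_{\dir{AB}}$, resp.\ $f_{\dir{CD}}$, takes both values), the condition $A \in BX$ becomes $X \ge A$ and the condition $C \in DX$ becomes $X \le C$; hence $f(X) = 1 \iff A \le X \le C \iff X \in AC$. Combining with the previous step yields $M_1(f) = AC \cap \gridMN$.

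I do not expect a genuine obstacle; the only points requiring care are the orientation bookkeeping — one must invoke \cref{prop:collinear_segments_and_point} for the pair $\dir{AB}, \dir{DC}$ rather than $\dir{AB}, \dir{CD}$ — and the degenerate case $A = C$, which the argument above handles without modification (then $AC$ is the single point $A$ and $M_1(f) = \{A\}$, as illustrated in \cref{fig:singleton}).
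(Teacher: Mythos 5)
Your argument is correct and complete; the paper states this corollary without proof, as an immediate consequence of \cref{th:superb_rectangle} (via \cref{lem:superb_rectangle_1}), and your verification --- \cref{prop:points_on_line} for points on $\ell(AB)$ and \cref{prop:collinear_segments_and_point} applied to the co-oriented pair $\dir{AB},\dir{DC}$ for points off it --- is exactly the intended route. One pedantic remark: when $A=C$ the two conditions you cite in the first step ($A\in BC$ and $C\in DA$) are vacuous and do not by themselves place $B$ and $D$ on opposite sides of $A$; you need either $f_{\dir{AB}}(D)=1$ (which gives $A\in BD$ by \cref{prop:points_on_line}) or the primality of both segments together with $B\neq D$, after which the rest of your argument goes through unchanged.
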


\begin{corollary}
\label{cor:singleton_from_segments}
Let $\{\dir{AB},\dir{AD}\}$ be a \superb pair of segments that define a $2$-threshold function $f$ on $\gridMN$. 
Then $M_1(f) = \{A\}$ (see \cref{fig:singleton}).
\end{corollary}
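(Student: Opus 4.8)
The plan is to reduce the statement to \cref{cor:all_ones_on_line}. Here the second segment of the \superb pair is $\dir{CD}=\dir{AD}$, that is, $C=A$, so the crux is to show that such a pair is necessarily a pair of \emph{collinear} prime segments; once this is established, the conclusion $M_1(f)=\{A\}$ follows at once, since then the degenerate ``segment'' $AC$ is just the single point $A$.

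First I would apply \cref{th:superb_rectangle} and rule out two of its three alternatives. Alternative~(3) is impossible, because it requires $A,B,C,D$ to be four distinct vertices of a convex quadrilateral, contradicting $C=A$. Alternative~(2) is impossible as well: in its first sub-case one has $A\in BD$ together with $\ortr{C}{D}{B}$ counterclockwise, and since $C=A$ the triangle $\ortr{C}{D}{B}$ is $\ortr{A}{D}{B}$, which is degenerate because $A\in BD$ forces $A,B,D$ to be collinear; in its second sub-case one has $C=A\in BD$ together with $\ortr{A}{B}{D}$ counterclockwise, which again contradicts the collinearity of $A,B,D$. Hence alternative~(1) must hold, namely $AC\subset BD$; as $C=A$, this says $A\in BD$, so $A,B,D$ are collinear, $\ell(AB)=\ell(AD)$, and $\{\dir{AB},\dir{AD}\}$ is a \superb pair of collinear prime segments defining $f$.

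Now \cref{cor:all_ones_on_line} gives $M_1(f)=AC\cap\gridMN$, and since $AC=\{A\}$ and $A\in\gridMN$ (because $\dir{AB}$ defines a function on $\gridMN$) we conclude $M_1(f)=\{A\}$, as claimed. There is no genuine obstacle in this argument: the only care needed is the routine case elimination above and reading $AC$ as the single point $A$. (Should one prefer a self-contained argument avoiding \cref{cor:all_ones_on_line}, one could instead use \cref{th:dir_segment_props}(3): the line $\ell(AB)=\ell(AD)$ is an inner common tangent separating the true and false points of each of $f_{\dir{AB}}$ and $f_{\dir{AD}}$, and since $\dir{AB}$ and $\dir{AD}$ run in opposite directions along this line, their ``true sides'' off the line are the two opposite open half-planes; hence every true point of $f=f_{\dir{AB}}\andl f_{\dir{AD}}$ lies on the line itself, and then \cref{prop:points_on_line} applied to both $\dir{AB}$ and $\dir{AD}$ pins such a point down to $A$.)
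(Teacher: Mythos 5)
Your argument is correct and is essentially the derivation the paper intends: the statement appears as a corollary of \cref{th:superb_rectangle}, and your case elimination (ruling out alternatives (2) and (3) because $C=A$ forces degeneracy, hence $A\in BD$ and the segments are collinear) followed by \cref{cor:all_ones_on_line} with $AC=\{A\}$ is exactly the implicit route. The self-contained variant via \cref{th:dir_segment_props}(3) and \cref{prop:points_on_line} is also sound, but adds nothing needed here.
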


\begin{corollary}
\label{cor:superb_intersect}
Let $\{\dir{AB},\dir{CD}\}$ be a \superb pair of segments that define a $2$-threshold function $f$ on $\gridMN$.
Then $AB \cap CD \neq \emptyset$ if and only if $M_1(f) = \{A\}$ (see \cref{fig:singleton}).
\end{corollary}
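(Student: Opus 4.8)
The plan is to prove the two implications separately, using \cref{th:superb_rectangle} to split the ``only if'' direction into its three structural cases. The ``if'' direction is quick: if $M_1(f)=\{A\}$, then since $f_{\dir{CD}}(C)=1$ by \cref{def:segment_defines_f} and $f_{\dir{AB}}(C)=1$ because the pair is \superb, we get $C\in M_1(f)=\{A\}$, so $C=A$; then $A$ lies on both $AB$ and $CD=AD$, giving $AB\cap CD\ne\emptyset$.

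For the ``only if'' direction I would assume $AB\cap CD\ne\emptyset$ and apply \cref{th:superb_rectangle}, aiming to show that only case~(1) with $A=C$ is compatible with this assumption. In case~(3), $\dir{ABCD}$ is a convex quadrilateral with pairwise distinct vertices and edges $AB,BC,CD,DA$; then $AB$ and $CD$ are non-adjacent edges of a convex polygon and hence disjoint, a contradiction. In case~(2), say $C\in BD$ with $\ortr{A}{B}{D}$ counterclockwise (the other sub-case is symmetric): then $A\notin\ell(BD)$ while $B,C,D$ are collinear, so $CD\subseteq\ell(BD)$ can meet $AB$ only at $B$; but $B\in CD$ together with $C\in BD$ would force $B=C$, contradicting \cref{prop:segment_end_points_enqualities}, so $AB\cap CD=\emptyset$ here as well. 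Hence we are in case~(1): $AC\subset BD$ with all four points collinear. As in the proof of \cref{lem:superb_rectangle_1}, primality yields $A\in BC$ and $C\in AD$; if $A\ne C$, the four points occur along $\ell(AB)$ in the strict order $B,A,C,D$ (using the distinctness facts $A\ne B$, $C\ne D$ and \cref{prop:segment_end_points_enqualities}), whence $AB$ and $CD$ are disjoint, a contradiction. So $A=C$, the pair $\{\dir{AB},\dir{AD}\}$ is \superb and defines $f$, and \cref{cor:singleton_from_segments} gives $M_1(f)=\{A\}$.

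The bulk of the work is the one-dimensional betweenness bookkeeping in cases~(1) and~(2), which is routine; the one step to be slightly careful about is the repeated use of \cref{prop:segment_end_points_enqualities} to rule out coincidences such as $B=C$ (equivalently $B\in CD$ when $C\in BD$), together with the elementary fact that two non-adjacent edges of a convex quadrilateral with distinct vertices cannot intersect. I do not expect any genuine obstacle beyond organizing these sub-cases cleanly.
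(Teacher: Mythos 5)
Your argument is correct and follows exactly the route the paper intends: the corollary is stated without an explicit proof as a consequence of \cref{th:superb_rectangle} together with \cref{cor:singleton_from_segments}, and your case analysis (opposite edges of a convex quadrilateral are disjoint; the betweenness bookkeeping in cases (1)--(2) combined with \cref{prop:segment_end_points_enqualities}) is the natural way to fill in that derivation. No gaps.
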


\begin{figure}
\begin{subfigure}[t]{0.45\textwidth}
\centering
\captionsetup{aboveskip=20pt}
\begin{tikzpicture}[scale=1.3]

    \draw[light_grey!50, fill=light_grey!50] (2,0) -- (3,2) -- (1,1);

	\draw (2.17,0) -- (3,2.6);
   	\draw (0,0.6) -- (3, 1.4); 

	\foreach \x in {0,...,3}
    	\foreach \y in {0,...,3}
    	{
    		\node[grid_point] (\x\y) at (\x,\y) {};
    	}
	
	\foreach \x in {0,...,1}
    	\foreach \y in {0,...,3}
    	{
    		\node[grid_white_point] (\x\y) at (\x,\y) {};
    	}

    	\node[small_text, below] at (21) {$A$};
    	\node[small_text, above] at (11) {$B$};
    	\node[small_text, left] at (20) {$C$};
   		\node[small_text, right] at (32) {$D$};
    	\draw[oriented_segment, name path=AB] (21) -- (11);
    	\draw[oriented_segment, name path=CD] (20) -- (32);

\end{tikzpicture}
\caption{
$A$, $B$, $C$, $D$ are in general position. $\protect\mathcal{P}$ is the grey triangle.
}
\label{fig:dir_seg_triangle_in_general}
\end{subfigure}\hfill\hfill
\begin{subfigure}[t]{0.45\textwidth}
\captionsetup{aboveskip=20pt}
\centering
\begin{tikzpicture}[scale=1.3]

	\draw (1.8,0) -- (2.83,3);
   	\draw (0,2.8) -- (3, 0.9);

	\foreach \x in {0,...,3}
    	\foreach \y in {0,...,3}
    	{
    		\node[grid_point] (\x\y) at (\x,\y) {};
    	}

    	\node[small_text, left] at (12) {$A$};
    	\node[small_text, above] at (03) {$B$};
    	\node[small_text, below left] at (21) {$C$};
    	\node[small_text, above] at (33) {$D$};
    	\draw[oriented_segment, name path=AB] (12) -- (03);
    	\draw[oriented_segment, name path=CD] (21) -- (33);

\end{tikzpicture}
\caption{
$A$, $B$, and $C$ are collinear.
}
\label{fig:dir_seg_triangle_3_collinear}
\end{subfigure}
\caption{
The solid lines are the separating lines of the threshold functions defined by the segments $\protect\dir{AB}$ and $\protect\dir{CD}$.}
\label{fig:dir_seg_triangle}
\end{figure}
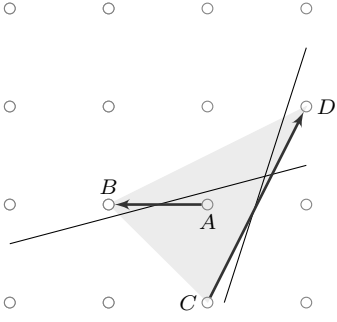
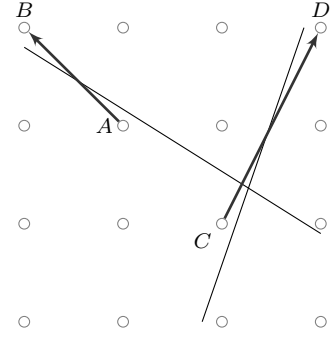

The following claim is related to the property of non-\superb pairs of oriented prime segments.

\begin{claim}
\label{prop:segments_zeros_ones}
Let $\dir{AB}, \dir{CD}$ be distinct prime segments in $\gridMN$ 
such that $f_{\dir{AB}}(C) = 1$, $f_{\dir{AB}}(D) = 0,$ and $f_{\dir{CD}}(A) = 1$.
Then $f_{\dir{CD}}(B) = 1$, the points $B,C,D$ are not collinear, and $A \in \ortr{B}{C}{D}$.
\end{claim}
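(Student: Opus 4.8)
The plan is to recast the three hypotheses as statements about the orientations of oriented triangles on $A,B,C,D$ and, after disposing of degenerate positions, apply \cref{prop:clockwise_triangles}. By \cref{def:segment_defines_f} and \cref{prop:points_on_line}, $f_{\dir{AB}}(C)=1$ means that $\ortr{A}{B}{C}$ is counterclockwise if $C\notin\ell(AB)$ and $A\in BC$ if $C\in\ell(AB)$; $f_{\dir{AB}}(D)=0$ means that $\ortr{A}{B}{D}$ is clockwise if $D\notin\ell(AB)$ and $B\in AD$ if $D\in\ell(AB)$; and $f_{\dir{CD}}(A)=1$ means that $\ortr{C}{D}{A}$ is counterclockwise if $A\notin\ell(CD)$ and $C\in DA$ if $A\in\ell(CD)$. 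We also record the immediate inequalities $A\neq D$ and $B\neq C$, since $f_{\dir{AB}}$ separates those pairs.

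The first step is to prove $D\notin\ell(AB)$. If $C$ and $D$ both lay on $\ell(AB)$, then $A,B,C,D$ would be collinear with $A\in BC$, $B\in AD$, and $C\in DA$; a short computation shows these three inclusions force $C=A$, whence $\dir{CD}=\dir{AD}$ is a prime segment that (since $\dir{AB}\neq\dir{CD}$ gives $B\neq D$) contains the integer point $B$ strictly in its interior — a contradiction. If $D\in\ell(AB)$ but $C\notin\ell(AB)$, then $\dir{AB}$ and $\dir{AD}$ are collinear segments with the same orientation, so by \cref{prop:collinear_segments_and_point} the triangles $\ortr{A}{B}{C}$ and $\ortr{A}{D}{C}$ would have the same orientation; but $\ortr{A}{B}{C}$ is counterclockwise while $\ortr{A}{D}{C}$ is clockwise, having the orientation opposite to the counterclockwise $\ortr{C}{D}{A}$. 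Hence $D\notin\ell(AB)$, $\ortr{A}{B}{D}$ is a non‑degenerate clockwise triangle, and $B\neq D$.

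Suppose first $C\notin\ell(AB)$. The same device rules out $A\in\ell(CD)$: otherwise $\dir{AC}$ and $\dir{AD}$ would be collinear with the same orientation and $B\notin\ell(AC)$, so \cref{prop:collinear_segments_and_point} would force $\ortr{A}{C}{B}$ and $\ortr{A}{D}{B}$ to agree, contradicting that $\ortr{A}{B}{C}$ is counterclockwise and $\ortr{A}{B}{D}$ clockwise. Thus $\ortr{A}{B}{C}$ and $\ortr{C}{D}{A}$ are counterclockwise and $\ortr{A}{B}{D}$ is clockwise; equivalently, $\ortr{B}{C}{A}$, $\ortr{C}{D}{A}$, $\ortr{D}{B}{A}$ are all counterclockwise. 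Now \cref{prop:clockwise_triangles}, applied with the substitution $(A,B,C,D)\mapsto(B,C,D,A)$, gives that $\ortr{B}{C}{D}$ is counterclockwise, so $B,C,D$ are not collinear; comparing, via \cref{prop:theSameOrient}, the three counterclockwise triangles above with $\ortr{B}{C}{D}$, $\ortr{C}{D}{B}$, $\ortr{D}{B}{C}$ places $A$ on the interior side of each of the lines $\ell(BC)$, $\ell(CD)$, $\ell(DB)$, so $A$ is an interior point of $\ortr{B}{C}{D}$; and since $\ortr{C}{D}{B}$ is counterclockwise and $B\notin\ell(CD)$, \cref{def:segment_defines_f} yields $f_{\dir{CD}}(B)=1$.

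Finally suppose $C\in\ell(AB)$, so that $A\in BC$. Since $D\notin\ell(AB)=\ell(BC)$, the points $B,C,D$ are not collinear and $A$ lies on the edge $BC$ of the triangle $BCD$, giving $A\in\ortr{B}{C}{D}$. For $f_{\dir{CD}}(B)=1$ I would distinguish two sub‑cases: if $A=C$, then $\dir{CD}=\dir{AD}$ and $\ortr{A}{D}{B}$ is counterclockwise, having orientation opposite to the clockwise $\ortr{A}{B}{D}$, while $B\notin\ell(AD)$, so $f_{\dir{AD}}(B)=1$; if $A\neq C$, then $A$ lies strictly between $B$ and $C$, so $\dir{CA}$ and $\dir{CB}$ are collinear with the same orientation and $D\notin\ell(CA)$, and \cref{prop:collinear_segments_and_point} carries the counterclockwise orientation of $\ortr{C}{D}{A}$ over to $\ortr{C}{D}{B}$, so again $f_{\dir{CD}}(B)=1$ since $B\notin\ell(CD)$. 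I expect the non‑degenerate configuration to be a one‑line consequence of \cref{prop:clockwise_triangles}; the main obstacle is the careful elimination of the degenerate positions — above all the step $D\notin\ell(AB)$, whose fully‑collinear sub‑case is the only place where the primality of the two segments, together with the hypothesis $\dir{AB}\neq\dir{CD}$, is genuinely needed.
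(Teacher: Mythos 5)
Your proof is correct and follows essentially the same route as the paper: the same elimination of the degenerate (collinear) configurations via \cref{prop:points_on_line} and \cref{prop:collinear_segments_and_point}, and the same application of \cref{prop:clockwise_triangles} in general position, merely with the sub-cases grouped by which points lie on $\ell(AB)$ and $\ell(CD)$ rather than by how many of the four points are collinear. The only substantive difference is that you obtain $A \in \ortr{B}{C}{D}$ by comparing orientations across the three edge-lines via \cref{prop:theSameOrient}, whereas the paper argues that neither $AB$ nor $AD$ can be an edge of $\Conv(\{A,B,C,D\})$; both arguments are sound.
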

\begin{proof} 
First we claim that the points $A$, $B$, $C$, $D$ are not collinear. Suppose to the contrary, that they are collinear.
Then, by \cref{prop:points_on_line}, we have $A \in BC$, $B \in AD$, and $C \in AD$, which
imply that either $A=C$ or $A = B$. The latter is not possible as $AB$ is a prime segment. 
Therefore $A=C$ and $B \in CD$. 
Since $CD$ is prime and $C=A \neq B$, we conclude that $B=D$ and $\dir{AB}=\dir{CD}$, 
which contradicts the assumption of the statement.

Assume now that $A,B,C,D$ do not lie on the same line.
From $f_{\dir{AB}}(C) \neq f_{\dir{AB}}(D)$ it follows that $\ell(AB)$ intersects $CD$.
Suppose three of the points $A,B,C,D$ are collinear.
We will consider four cases:
\begin{enumerate}
\item \textit{$A,C,B$ are collinear, i.e. $CD \cap \ell(AB) = C$} (see  \cref{fig:dir_seg_triangle_3_collinear}).
By \cref{prop:points_on_line}, we have $A \in BC$ and hence $A \in \ortr{B}{C}{D}$.
To show $f_{\dir{CD}}(B) = 1$ we observe that the segments $\dir{AB}$ and $\dir{CB}$ are collinear and have the same orientation,
and therefore, by \cref{prop:collinear_segments_and_point}, the triangles $\ortr{A}{B}{D}$ and $\ortr{C}{B}{D}$ have the same orientation.
Since $f_{\dir{AB}}(D) = 0$, the triangle $\ortr{A}{B}{D}$ is clockwise, and hence $\ortr{C}{D}{B}$ is counterclockwise and $f_{\dir{CD}}(B) = 1$.

\item \textit{$A,B,D$ are collinear, i.e. $CD \cap \ell(AB) = D$}.
We will prove that this case is impossible by showing that $\ortr{C}{D}{A}$ is a clockwise triangle, which contradicts $f_{\dir{CD}}(A) = 1$.
By \cref{prop:points_on_line}, we have $B \in AD$, and therefore the segments $\dir{AB}$ and $\dir{AD}$ are collinear and have the same orientation.
Hence, by \cref{prop:collinear_segments_and_point}, the triangles $\ortr{A}{B}{C}$ and $\ortr{A}{D}{C}$ have the same orientation.
Namely, since $f_{\dir{AB}}(C)=1$, we conclude that both triangles are counterclockwise.
Consequently, $\ortr{C}{D}{A}$ is clockwise, as desired.

\item \textit{$A,C,D$ are collinear, i.e. $CD \cap \ell(AB) = A$}.
Since $CD$ is prime and $f_{\dir{CD}}(A) = 1$,
we conclude that $A = C$ and hence the first case takes place.

\item \textit{$C,B,D$ are collinear, i.e. $CD \cap \ell(AB) = B$}.
Since $CD$ is prime and $f_{\dir{AB}}(D) = 0$,
we conclude that $B = D$ and hence the second case takes place.
\end{enumerate}

Assume finally that $A,B,C,D$ are in general position and denote \linebreak $\mathcal{P} = \Conv(\{A,B,C,D\})$ (see \cref{fig:dir_seg_triangle_in_general}).
We consider the oriented triangles $\ortr{C}{D}{A}$, $\ortr{A}{B}{C}$, $\ortr{B}{A}{D}$, and $\ortr{C}{D}{B}$.
It follows from the assumptions of the claim that the first three triangles are counterclockwise.
Therefore, by \cref{prop:clockwise_triangles}, the triangle $\ortr{C}{D}{B}$ is also counterclockwise, 
and hence $f_{\dir{CD}}(B) = 1$.

It remains to show that $A$ belongs to the triangle $\ortr{B}{C}{D}$, i.e.~$\mathcal{P} = \ortr{B}{C}{D}$.
Suppose, to the contrary, $\mathcal{P} \neq \tr{B}{C}{D}$. Then $A$ is a vertex of $\mathcal{P}$ and two of the segments $AC$, $AB$, and $AD$ are edges of $\mathcal{P}$.
We will arrive to a contradiction by showing that neither $AB$ nor $AD$ can be an edge of $\mathcal{P}$.
Indeed, if $AB$ is an edge of $\mathcal{P}$, then $C$ and $D$ are not separated by $\ell(AB)$, which contradicts $f_{\dir{AB}}(C) \neq f_{\dir{AB}}(D)$.
Furthermore, if $AD$ is an edge of $\mathcal{P}$, then $B$ and $C$ are not separated by $\ell(AD)$, and hence the triangles $\ortr{D}{A}{C}$ and 
$\ortr{D}{A}{B}$ have the same orientation. However, the triangle $\ortr{D}{A}{C}$ is counterclockwise as $f_{\dir{CD}}(A) = 1$, and the triangle $\ortr{D}{A}{B}$ is clockwise as $f_{\dir{AB}}(D) = 0$. Contradiction.
\end{proof}

\begin{corollary}
\label{cor:intersection_is_point}
Under the conditions of \cref{prop:segments_zeros_ones} the intersection $\ell(AB) \cap CD$ is a point $X$ such that $A \in XB$.
\end{corollary}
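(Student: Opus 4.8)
The plan is to derive everything from the conclusion of \cref{prop:segments_zeros_ones}, which already supplies the two facts we need about the configuration: the points $B,C,D$ are not collinear, and $A$ lies in the closed triangle $\ortr{B}{C}{D}$. Throughout, recall that $A\neq B$ since $\dir{AB}$ is prime.

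First I would pin down $\ell(AB)\cap CD$. Since $B$ is a vertex of the nondegenerate triangle $\ortr{B}{C}{D}$, we have $B\notin\ell(CD)$, so the lines $\ell(AB)$ and $\ell(CD)$ are distinct and hence meet in at most one point; on the other hand, as noted in the proof of \cref{prop:segments_zeros_ones}, $f_{\dir{AB}}(C)\neq f_{\dir{AB}}(D)$ forces $\ell(AB)$ to cross the segment $CD$, so the intersection is nonempty. Thus $\ell(AB)\cap CD=\{X\}$ for a single point $X$, which settles the first assertion.

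It remains to show $A\in XB$. I would argue via the ray $r$ emanating from $B$ and passing through $A$; since $A\neq B$ this ray lies on $\ell(AB)$. As $B$ is a vertex of $\ortr{B}{C}{D}$ and $A$ is a point of this closed triangle on $r$ with $A\neq B$, the intersection $r\cap\ortr{B}{C}{D}$ is a nondegenerate segment $BY$ whose far endpoint $Y$ lies on the closed edge $CD$: indeed $Y$ must lie on the boundary of the triangle, and it cannot lie in the relative interior of $BC$ or of $BD$, for then $C$ (respectively $D$) would be a point of the triangle strictly further along $r$ than $Y$, contradicting the maximality of $BY$. Hence $A\in BY$ and $Y\in CD\cap\ell(AB)=\{X\}$, so $Y=X$ and $A\in XB$, as required.

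The argument is essentially routine; the only delicate point is the elementary fact that a ray from a vertex of a triangle through a point of the closed triangle leaves it through the opposite closed edge, which I would justify in the line indicated above. If one prefers a purely synthetic route, the same conclusion follows from the already-established orientations: one first checks that $X$ lies strictly between $C$ and $D$ — the alternatives $C\in\ell(AB)$ and $D\in\ell(AB)$ reduce, respectively, to $A,B,C$ collinear (then $X=C$ and $A\in BC$ follow directly from \cref{prop:points_on_line}) and to $A,B,D$ collinear (impossible by the case analysis inside the proof of \cref{prop:segments_zeros_ones}) — then transfers via \cref{prop:collinear_segments_and_point} the counterclockwise orientations of $\ortr{C}{D}{A}$ and $\ortr{C}{D}{B}$ to $\ortr{C}{X}{A}$ and $\ortr{C}{X}{B}$, applies \cref{prop:theSameOrient} to rule out $X$ lying between $A$ and $B$, and excludes $B$ lying between $X$ and $A$ since that would make $\ortr{C}{X}{B}$ clockwise.
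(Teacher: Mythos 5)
Your main argument is correct and is essentially the justification the paper intends: the corollary is stated without proof as an immediate consequence of \cref{prop:segments_zeros_ones}, and your two steps (the lines $\ell(AB)$ and $\ell(CD)$ are distinct because $B,C,D$ are not collinear, and the ray from $B$ through $A$ leaves the closed triangle $BCD$ through the closed edge $CD$, necessarily at the unique intersection point $X$) fill in exactly that reasoning. One small caveat about your optional synthetic variant: the final exclusion of ``$B$ strictly between $X$ and $A$'' is not delivered by the orientation of $\ortr{C}{X}{B}$, which does not depend on where $A$ sits on the line; that case is really ruled out by $A\in\ortr{B}{C}{D}$ (a point beyond the vertex $B$ on the line $XB$ lies outside the triangle), i.e., by the same containment fact your main argument already uses.
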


\section{Proper pairs of segments and proper $2$-threshold functions}
\label{sec:proper_functions}

In \cref{cor:zeros_ones_intersection} we showed that proper pairs of segments define proper $2$-threshold functions.
In this section we will prove that the representation by a proper pair of segments is possible for all proper $2$-threshold functions and analyze the number of different proper pairs of segments corresponding to the same function.
In particular, we will show that such a representation is unique for proper $2$-threshold functions with a true point on the boundary of the grid.
We start with the existence of a proper pair of segments for a proper $2$-threshold function.

\begin{theorem}
\label{th:proper_exist}
For any proper $2$-threshold function $f$ on $\gridMN$ there exists a \textup{proper} pair of segments 
in $\gridMN$ that defines $f$.
\end{theorem}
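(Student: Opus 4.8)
The plan is to start from an arbitrary representation of the proper $2$-threshold function $f$ as a conjunction $f = f_1 \wedge f_2$ of two threshold functions, and then to reduce it, step by step, to a conjunction of two \emph{non-constant} threshold functions whose defining oriented prime segments $\dir{AB}$, $\dir{CD}$ (which exist and are unique by \cref{lem:dir_seg_exist}) form a proper pair. First I would observe that since $f$ is proper $2$-threshold but not threshold, in any representation $f = f_1 \wedge f_2$ neither $f_1$ nor $f_2$ can be constant equal to $1$ on $\gridMN$ (otherwise $f$ would equal the other factor and be threshold), and neither can be identically $0$ (otherwise $f \equiv 0$, which is threshold); so both $f_1, f_2$ are non-constant and we may write $f_1 = f_{\dir{AB}}$, $f_2 = f_{\dir{CD}}$ for uniquely determined prime segments. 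Thus the real content is to choose the representation so that the four endpoint conditions $f_{\dir{CD}}(A) = f_{\dir{CD}}(B) = f_{\dir{AB}}(C) = f_{\dir{AB}}(D) = 1$ all hold.

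The key idea for achieving properness is a ``tightening'' or ``rotation'' argument on each separating line, analogous to the construction used in the proof of \cref{th:dir_segment_props}. Given $f = f_{\dir{AB}} \wedge f_{\dir{CD}}$, consider $\Conv(M_1(f))$, which is nonempty (as $f \not\equiv 0$) and disjoint from each of $\Conv(M_0(f_{\dir{AB}}))$, $\Conv(M_0(f_{\dir{CD}}))$. I would replace $f_{\dir{AB}}$ by the threshold function whose separating line is the left inner common tangent to $\Conv(M_1(f))$ and to the set of points on which $f_{\dir{AB}}$ must be $0$ — more precisely, push the line $\ell(AB)$ toward $\Conv(M_1(f))$ until it supports $\Conv(M_1(f))$, keeping it as an inner common tangent. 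By \cref{lem:dir_seg_exist} this new threshold function is $f_{\dir{A'B'}}$ for some prime segment with $\ell(A'B')$ supporting $\Conv(M_1(f))$; since we only enlarged the true set of the first factor while still keeping $M_1(f)$ on the true side and the relevant false points on the false side, the conjunction is unchanged: $f = f_{\dir{A'B'}} \wedge f_{\dir{CD}}$. Because $\ell(A'B')$ is now a tangent to $\Conv(M_1(f))$ and $C, D \in M_1(f)$, \cref{prop:points_on_line} together with \cref{def:segment_defines_f} gives $f_{\dir{A'B'}}(C) = f_{\dir{A'B'}}(D) = 1$: points of $M_1(f)$ lie on the closed true side of $\ell(A'B')$, and a point strictly on the false side or on the line with the wrong betweenness relation is impossible for an element of $M_1(f)$ once the line supports the hull (the only subtlety is the on-line case, where primality of $A'B'$ and \cref{prop:points_on_line} force the correct betweenness). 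Symmetrically, applying the same tightening to the second factor yields $\dir{C'D'}$ with $f_{\dir{C'D'}}(A') = f_{\dir{C'D'}}(B') = 1$, and this second step does not destroy the equalities $f_{\dir{A'B'}}(C') = f_{\dir{A'B'}}(D') = 1$ because $\ell(A'B')$ still supports $\Conv(M_1(f))$ and $C', D'$ are still true points of $f$, hence in $\Conv(M_1(f))$. So $\{\dir{A'B'}, \dir{C'D'}\}$ is a proper pair defining $f$.

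The main obstacle I anticipate is making the ``push the line to a supporting position while keeping the conjunction unchanged'' step fully rigorous — in particular verifying that after pushing $\ell(AB)$ toward $\Conv(M_1(f))$ one still has a genuine threshold function separating $M_1(f)$ from the points it needs to kill, that the resulting line can be taken to pass through two adjacent grid points (so that \cref{lem:dir_seg_exist} applies to name it $f_{\dir{A'B'}}$), and that no true point of $f$ gets flipped to $0$ in the process. One has to argue that the left inner common tangent to $\Conv(M_1(f))$ and the appropriate false-point hull exists (the two hulls are disjoint since $f$ restricted by the other factor is still consistent), is a rational line, and hence by the bijection of \cref{lem:dir_seg_exist} corresponds to a unique prime segment; the degenerate possibility that $\Conv(M_1(f))$ is a single point or a segment should be handled by noting that then $\ell(A'B')$ is chosen tangent to that point or segment, and the on-line clause of \cref{def:segment_defines_f} combined with \cref{prop:points_on_line} still delivers $f_{\dir{A'B'}}(C') = f_{\dir{A'B'}}(D') = 1$. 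Once these geometric facts are in place, the four required equalities are immediate and the theorem follows.
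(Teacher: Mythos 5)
Your overall strategy --- tighten each separating line until it becomes an inner common tangent involving $\Conv(M_1(f))$ --- is the same skeleton the paper uses (its tangents $\ell_X$ and $\ell_Y$), but two essential steps are missing or wrong, and the first one is fatal as written. You assert that $C,D\in M_1(f)$ (and later that $C',D'$ are true points of $f$) in order to conclude $f_{\dir{A'B'}}(C)=f_{\dir{A'B'}}(D)=1$ from tangency of $\ell(A'B')$ to $\Conv(M_1(f))$. But $D$ is the \emph{false} endpoint of $\dir{CD}$: by \cref{def:segment_defines_f}, $f_{\dir{CD}}(D)=0$, hence $f(D)=0$ and $D\notin M_1(f)$. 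Tangency to the true-point hull therefore says nothing about the value of the first factor at $D$ (nor, symmetrically, of the second factor at $B$). The two conditions $f_{\dir{AB}}(D)=1$ and $f_{\dir{CD}}(B)=1$ are precisely the hard half of properness, and they are the reason the paper does not simply take supporting lines of $\Conv(M_1(f))$: it builds canonical false-point sets $M_X$ and $M_Y$ (intersections over \emph{all} defining pairs), takes $\ell_X$, $\ell_Y$ to be the left inner common tangents to $\Conv(M_1(f))$ and $\Conv(M_X)$, resp.\ $\Conv(M_Y)$, and places the false endpoints $B^*\in M_X$, $D^*\in M_Y$ at minimal distance; the equalities $f_{\dir{C^*D^*}}(B^*)=f_{\dir{A^*B^*}}(D^*)=1$ then come from the definitions of $M_X$ and $M_Y$, not from tangency to $\Conv(M_1(f))$.

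The second gap is the claim that the conjunction is unchanged after tightening. Enlarging the true set of the first factor turns some points $Z$ with $f(Z)=0$ into true points of the new first factor, and you must show that every such $Z$ is still killed by the second factor; this is not automatic and is where most of the paper's proof goes (the contradiction arguments involving the point $Z$, the polygon $\mathcal{P}$, and the two regions cut off by $A^*C^*$, split according to whether the resulting pair is proper). Relatedly, ``the set of points on which $f_{\dir{AB}}$ must be $0$'' is not well defined --- it depends on the choice of the other factor, which is changed in your second step --- and this is another reason the paper intersects over all defining pairs before taking tangents. You correctly identify the primality of the new segments and the degenerate hulls as issues to handle, but the two substantive claims above are the core of the theorem and are not established by the proposal.
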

\begin{proof}
Since every proper $2$-threshold function is a conjunction of two \linebreak non-constant threshold functions, it follows from \cref{lem:dir_seg_exist} that there exists a pair of oriented prime segments that defines $f$.
Let $\{\dir{AB}, \dir{CD}\}$ be a pair of oriented prime segments defining $f$ such that $|M_1(f_{\dir{AB}})| + |M_1(f_{\dir{CD}})|$
is minimized. 
We claim that $f_{\dir{CD}}(A) = f_{\dir{AB}}(C) =1$.
For the sake of contradiction, assume without loss of generality that $f_{\dir{CD}}(A) = 0$.
By \cref{th:dir_segment_props}, the point $A$ is essential for $f_{\dir{AB}}$, hence the function $f'$, that differs from $f_{\dir{AB}}$ in the unique point $A$, is threshold.
Since $A \in M_0(f_{\dir{CD}})$ and $M_1(f') = M_1(f_{\dir{AB}}) \setminus \{A\}$, we have 
$$
M_1(f') \cap M_1(f_{\dir{CD}}) = M_1(f_{\dir{AB}}) \cap M_1(f_{\dir{CD}}) = M_1(f),
$$ 
and therefore $f = f' \land f_{\dir{CD}}$.
By assumption $f$ is proper, and hence $f'$ is a non-constant threshold function. 
Consequently, by \cref{lem:dir_seg_exist}, there exists an oriented prime segment $\dir{A'B'}$ that defines $f'$. 
Therefore, the pair $\{\dir{A'B'}, \dir{CD}\}$ defines $f$.
But $|M_1(f')| <  |M_1(f_{\dir{AB}})|$, which contradicts the choice of $\{\dir{AB}, \dir{CD}\}$.

Since $f$ is non-threshold, there exist $X,Y \in M_0(f)$ such that \linebreak $XY \cap \Conv(M_1(f)) \neq \emptyset$.
Indeed, otherwise $\Conv(M_0(f))$ and $\Conv(M_1(f))$ would be disjoint, and therefore separable by a line.
Hence, for any pair of prime segments $\dir{AB}, \dir{CD}$ that defines $f$, neither $f_{\dir{AB}}$ nor $f_{\dir{CD}}$ can be false in both $X,Y$. 
Furthermore, since $X,Y \in M_0(f)$, we conclude that
one of the points is a false point of $f_{\dir{AB}}$ and a true point of $f_{\dir{CD}}$, and
the other point is a true point of $f_{\dir{AB}}$ and a false point of $f_{\dir{CD}}$.

Let $\mathcal{X}$ be the family of \textit{ordered} pairs of segments $\dir{AB},\dir{CD}$ defining $f$
such that $X \in M_0(f_{\dir{AB}}) \cap M_1(f_{\dir{CD}})$ and 
$Y \in M_1(f_{\dir{AB}}) \cap M_0(f_{\dir{CD}})$. 
Denote
$$
M_X = \bigcap\limits_{(\dir{AB},\dir{CD}) \in \mathcal{X}}M_0(f_{\dir{AB}}) \cap M_1(f_{\dir{CD}}).
$$
$$
M_Y = \bigcap\limits_{(\dir{AB},\dir{CD}) \in \mathcal{X}} M_1(f_{\dir{AB}}) \cap M_0(f_{\dir{CD}}).
$$
Notice that each of $M_X$ and $M_Y$ is the intersection of convex sets that have a common element,
and therefore both $M_X$ and $M_Y$ are non-empty and convex.
Moreover, since $M_X, M_Y \subset M_0(f)$,
both $\Conv(M_X)$ and $\Conv(M_Y)$ are disjoint from $\Conv(M_1(f))$.

Let $\ell_X$ be the left inner common tangent to $\Conv(M_1(f))$ and $\Conv(M_X)$.
Let $A^* \in \Conv(M_1(f)) \cap \ell_X$, $B^* \in \Conv(M_X) \cap \ell_X$ be such that $A^*B^*$ is of minimum length.
We claim that $A^*B^*$ is a prime segment.
To prove this, we show first that $\Conv(M_1(f) \cup M_X)$ contains no integer points other than points in $M_1(f) \cup M_X$.
Indeed, let $(\dir{AB}, \dir{CD})$ be a pair of segments from $\mathcal{X}$ and suppose there exists an integer point $Z$
in $\Conv(M_1(f) \cup M_X)$ that belongs neither to $M_1(f)$ nor to $M_X$.
Notice, by definition, $M_X \subset M_1(f_{\dir{CD}})$ and $M_1(f) \subset M_1(f_{\dir{CD}})$, which implies that
$\Conv(M_1(f) \cup M_X) \subseteq \Conv(M_1(f_{\dir{CD}}))$. Consequently, if $f_{\dir{AB}}(Z) = 1$ we have $Z \in M_1(f)$,
and if $f_{\dir{AB}}(Z) = 0$ we have $Z \in M_X$, a contradiction.
Now, any segment with endpoints in $M_1(f) \cup M_X$ belongs to $\Conv(M_1(f) \cup M_X)$, hence if there is an integer point $Z$ in the interior of $A^*B^*$ then $Z \in M_1(f) \cup M_X$, which contradicts the minimality of $A^*B^*$.
Similarly, considering the left inner common tangent $\ell_Y$ to $\Conv(M_1(f))$ and $\Conv(M_Y)$, the two points
$C^* \in M_1(f) \cap \ell_Y$, $D^* \in M_Y \cap \ell_Y$ at minimum distance define a prime segment $C^*D^*$.
\cref{fig:Mx_My} illustrates $M_X, M_Y, A^*, B^*, C^*$, and $D^*$.

\begin{figure}
\centering
\begin{subfigure}[t]{0.45\textwidth}
\captionsetup{aboveskip=20pt}
\centering
\begin{tikzpicture}[scale=0.9]
	\draw[light_grey, fill=light_grey!50] (1,4) -- (3,2) -- (4,4) -- (1,4);
	\draw[gray] (0,0) -- (1,0) -- (2,1) -- (2,2) -- (0,4) -- (0,0);
	\draw[gray] (6,0) -- (6,4) -- (5,4) -- (4,3) -- (4,2) -- (6,0);
	
    	\draw[dashed_line, name path=lx] (0.8,-0.2) -- (5.2,4.2);
    	\draw[dashed_line, name path=ly] (0.9,4.2) -- (3.1,-0.2);
    
    	\path [name intersections={of=lx and ly, by=O}];
    	\pattern[pattern=north east hatch, hatch distance=2mm, pattern color = light_grey, hatch thickness=.5pt] (1,4) -- (O) -- (5,4);
    	
	\draw[oriented_segment, name path=AB] (3,2) -- (4,3);
    	\draw[oriented_segment, name path=CD] (1,4) -- (2,2);

    	\node[true_point] (C) at (1,4) {};
    	\node[true_point] (A) at (3,2) {};
    	\node[false_point] (Y) at (0.2,3) {};
    	\node[false_point] (X) at (5.8,3) {};
    	\node[grid_white_point] (D) at (2,2) {};
    	\node[grid_white_point] (B) at (4,3) {};
   	\node[true_point] (Z) at (4.3,3.7) {};

    	\node[small_text, below right] at (A) {$A^*$};	
    	\node[small_text, below right] at (B) {$B^*$};	
    	\node[small_text, left] at (C) {$C^*$};	
    	\node[small_text, left] at (D) {$D^*$};	
    	\node[small_text, above] at (Y) {$Y$};	
    	\node[small_text, above] at (X) {$X$};	
    	\node[small_text, right] at (Z) {$Z$};	
    	\node[small_text, above right] (Mx) at (4.7,2) {$M_X$};	
    	\node[small_text, above right] (My) at (0.5,1) {$M_Y$};
    	\node[small_text, below left] (M1) at (2.8,3.8) {$M_1(f)$};	
    	\node[small_text, above right] (l_x) at (5.2,4.2) {$\ell_X$};	
    	\node[small_text, above right] (l_y) at (3.1,-0.2) {$\ell_Y$};	

\end{tikzpicture}
\caption{All integer points of the stripped region are exactly the true points of $f^*$. $Z$ is chosen outside of $\protect\Conv(M_1(f))$ and such that $f^*(Z) = 1$.}
\label{fig:Mx_My_a}
\end{subfigure}\hfill\hfill
\begin{subfigure}[t]{0.45\textwidth}
\captionsetup{aboveskip=20pt}
\centering
\begin{tikzpicture}[scale=0.9]

	\draw[light_grey, fill=light_grey!50] (1,4) -- (3,2) -- (4,4) -- (1,4);
	\draw[gray] (0,0) -- (1,0) -- (2,1) -- (2,2) -- (0,4) -- (0,0);
	\draw[gray] (6,0) -- (6,4) -- (5,4) -- (4,3) -- (4,2) -- (6,0);
	\pattern[pattern=north west hatch, hatch distance=2mm, pattern color = light_grey, hatch thickness=.5pt] (2,2) -- (3,2) -- (1,4);
	\pattern[pattern=north east hatch, hatch distance=2mm, pattern color = light_grey, hatch thickness=.5pt] (1,4) -- (3,2) -- (4,3) -- (4.3,3.7) -- (4,4) -- (1,4);
	
    	\draw[dashed_line, name path=lx] (0.8,-0.2) -- (5.2,4.2);
    	\draw[dashed_line, name path=ly] (0.9,4.2) -- (3.1,-0.2);

	\draw[oriented_segment, name path=AB] (3,2) -- (4,3);
   	\draw[oriented_segment, name path=CD] (1,4) -- (2,2);
    
    	\path [name intersections={of=lx and ly, by=O}];

	\draw[black!60] (C) -- (A);
	\draw[black!60] (D) -- (Z);
    	
    	\node[true_point] (C) at (1,4) {};
    	\node[true_point] (A) at (3,2) {};
    	\node[false_point] (Y) at (0.2,3) {};
    	\node[false_point] (X) at (5.8,3) {};
     	\node[grid_white_point] (D) at (2,2) {};
    	\node[grid_white_point] (B) at (4,3) {};   	
    	\node[true_point] (Z) at (4.3,3.7) {};
    
    	\node[small_text, below right] at (A) {$A^*$};	
    	\node[small_text, below right] at (B) {$B^*$};	
    	\node[small_text, left] at (C) {$C^*$};	
    	\node[small_text, left] at (D) {$D^*$};	
    	\node[small_text, above] at (Y) {$Y$};	
    	\node[small_text, above] at (X) {$X$};	
    	\node[small_text, right] at (Z) {$Z$};	
    	\node[small_text, above right] (Mx) at (4.7,2) {$M_X$};	
    	\node[small_text, above right] (My) at (0.5,1) {$M_Y$};
    	\node[small_text, below left] (P) at (3.2,3.8) {$\mathcal{P}$};	
    	\node[small_text, above right] (l_x) at (5.2,4.2) {$\ell_X$};	
    	\node[small_text, above right] (l_y) at (3.1,-0.2) {$\ell_Y$};	

\end{tikzpicture}
\caption{The pair $\protect\dir{A^*B^*}$, $\protect\dir{C^*D^*}$ is proper. The stripped region is $\protect\mathcal{P}$. $S_1$ and $S_2$ have the different pattern orientation. The segment $D^*Z$ intersects $A^*C^*$.}
\label{fig:Mx_My_b}
\end{subfigure}
\caption{The white polygons are $\protect\Conv(M_X)$ and $\protect\Conv(M_Y)$. The grey polygon is $\protect\Conv(M_1(f))$.}
\label{fig:Mx_My}
\end{figure}
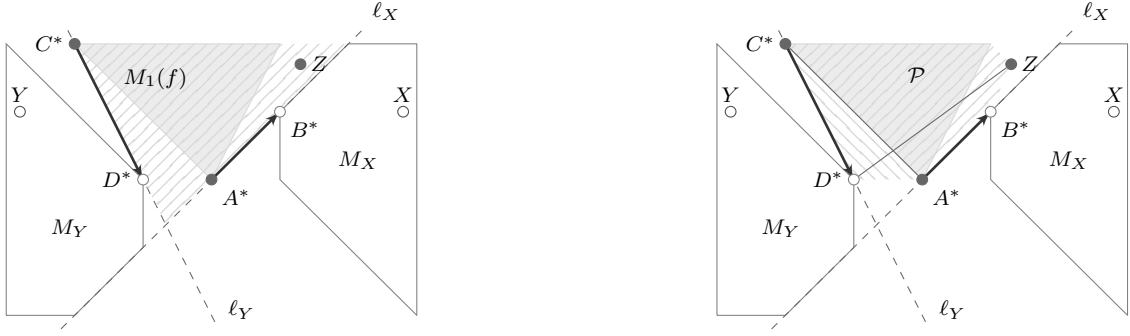

Let now $f^* = f_{\dir{A^*B^*}} \land f_{\dir{C^*D^*}}$ be the 2-threshold function defined by \linebreak $\{\dir{A^*B^*}, \dir{C^*D^*}\}$.
In the rest of the proof we will show that $f = f^*$ and the pair $\{\dir{A^*B^*}, \dir{C^*D^*}\}$ is proper. 
To establish the former we will prove that $M_1(f) = M_1(f^*)$.

First we show that $M_1(f) \subseteq M_1(f^*)$.
Indeed, by definition, $\ell(\dir{A^*B^*}) = \ell_X$ is a left tangent from $B^*$ to $\Conv(M_1(f))$, and therefore
$M_1(f) \subseteq M_1(f_{\dir{A^*B^*}})$.
Similarly, we have $M_1(f) \subseteq M_1(f_{\dir{C^*D^*}})$, and therefore 
$M_1(f) \subseteq M_1(f_{\dir{A^*B^*}}) \cap M_1(f_{\dir{C^*D^*}}) = M_1(f^*)$.

Now, let us show that $M_1(f^*) \subseteq M_1(f)$.
Assume, to the contrary, $M_1(f^*) \setminus M_1(f) \neq \emptyset$ and let $Z$ be
a point in $M_1(f^*) \setminus M_1(f)$.
In particular, we have $Z \notin M_X \cup M_Y$.
We observe that $f(Z) = 0$ and $Z \notin M_Y$ imply that there exists a pair $(\dir{AB}, \dir{CD}) \in \mathcal{X}$ such that $Z \in M_0(f_{\dir{AB}})$, and therefore $M_X \cup \{Z\} \subseteq M_0(f_{\dir{AB}})$ and 
\begin{equation}\label{eq:MxZ}
	\Conv(M_X \cup \{Z\})\cap \Conv(M_1(f)) = \emptyset.
\end{equation}
Similarly, it can be shown that 
\begin{equation}\label{eq:MyZ}
	\Conv(M_Y \cup \{Z\}) \cap \Conv(M_1(f)) = \emptyset.
\end{equation}

We will consider two cases depending on whether $\{\dir{A^*B^*},\dir{C^*D^*}\}$ is a \superb pair or not.
We start with the case of \superb pair, in which case we have $f_{\dir{A^*B^*}}(D^*) = f_{\dir{C^*D^*}}(B^*) = 1$ (see \cref{fig:Mx_My_a}).
First we claim that $A^* \neq C^*$. Indeed, otherwise, by \cref{cor:singleton_from_segments}, we would have $M_1(f^*) = \{A^*\}$, and therefore since $M_1(f) \subseteq M_1(f^*)$ and $M_1(f^*) \setminus M_1(f) \neq \emptyset$, we would conclude that $f$ is the constant-zero function, contradicting the assumption that $f$ is a proper 2-threshold function.
Let us now denote $\mathcal{P} = \Conv(M_1(f^*) \cup \{B^*,D^*\})$.
From $M_1(f) \cup \{D^*\} \subseteq M_1(f_{\dir{A^*B^*}})$ and $A^*,B^* \in \ell_X$ it follows that $\ell_X$ is a tangent to $\mathcal{P}$ where $A^*$ is a tangent point.
Analysis similar to the above implies that $\ell_Y$ is a tangent to $\mathcal{P}$ and $C^*$ is a tangent point.
Consequently, all points of $\mathcal{P} \setminus A^*C^*$ are separated by the segment $A^*C^*$ into two parts, which we denote as $S_1$ and $S_2$ (see \cref{fig:Mx_My_b}).
By \cref{cor:zeros_ones_intersection}, the segments $A^*C^*$ and $B^*D^*$ intersect, and hence $B^*$ and $D^*$ are in different parts, say $B^* \in S_1$ and $D^*\in S_2$.
We now claim that $Z$ belongs to one of the parts $S_1$ and $S_2$.
To see this, we first observe that $Z \in M_1(f^*) \subseteq \mathcal{P}$. Furthermore, since $Z$ belongs to
$M_0(f)$, it does not belong to $A^*C^*$, and hence the claim.
Now, assume without loss of generality $Z \in S_1$, and therefore $D^*Z$ intersects $A^*C^*$.
Since $D^* \in M_Y$ and $A^*C^* \subseteq \Conv(M_1(f))$, we conclude that $\Conv(M_Y \cup \{Z\}) \cap \Conv(M_1(f)) \neq \emptyset$, which contradicts (\ref{eq:MyZ}).

\begin{figure}
\centering
\begin{subfigure}[t]{0.45\textwidth}
\captionsetup{aboveskip=20pt}
\centering
\begin{tikzpicture}[scale=0.9]

	\draw[light_grey, fill=light_grey!50] (1,4) -- (3,2) -- (4,4) -- (1,4);
	\draw[gray] (0,0) -- (1,0) -- (2.75,0.5) -- (1.7,2) -- (0,4) -- (0,0);
	\draw[gray] (6,0) -- (6,4) -- (5,4) -- (4,3) -- (4,2) -- (6,0);
	\pattern[pattern=north west hatch, hatch distance=2mm, pattern color = light_grey, hatch thickness=.5pt] (O) -- (3,2) -- (1,4);
	\pattern[pattern=north east hatch, hatch distance=2mm, pattern color = light_grey, hatch thickness=.5pt] (1,4) -- (3,2) -- (4,3) -- (4.3,3.7) -- (4,4) -- (1,4);
	
    	\draw[dashed_line, name path=lx] (0.8,-0.2) -- (5.2,4.2);
    	\draw[dashed_line, name path=ly] (0.9,4.2) -- (3.1,-0.2);

	\draw[oriented_segment, name path=AB] (A) -- (B);
    	\draw[oriented_segment, name path=CD] (C) -- (2.75,0.5);
    
	\draw[name path=YZ,black!60]  (Y) -- (Z);    
    
    	\path [name intersections={of=lx and ly, by=O}];
   	\path [name intersections={of=ly and YZ, by=V}];

	\draw[black!60] (C) -- (A);
	\draw[black!60] (O) -- (Z);

    	\node[true_point] (C) at (1,4) {};
    	\node[true_point] (A) at (3,2) {};
    	\node[false_point] (Y) at (0.2,3) {};
    	\node[false_point] (X) at (5.8,3) {};
    	\node[grid_white_point] (D) at (2.75,0.5) {};
    	\node[grid_white_point] (B) at (4,3) {};
    	\node[true_point] (Z) at (4.3,3.7) {};
    	\node[intersection_point] at (O) {};
    	\node[intersection_point] at (V) {};
    
    	\node[small_text, below right] at (A) {$A^*$};	
    	\node[small_text, below right] at (B) {$B^*$};	
    	\node[small_text, left] at (C) {$C^*$};	
    	\node[small_text, left] at (D) {$D^*$};	
    	\node[small_text, above] at (Y) {$Y$};	
    	\node[small_text, above] at (X) {$X$};	
    	\node[small_text, left] at (O) {$O$};	
   	\node[small_text, above left] at (V) {$V$};	
    	\node[small_text, right] at (Z) {$Z$};	
    	\node[small_text, above right] (Mx) at (4.7,2) {$M_X$};	
    	\node[small_text, above right] (My) at (0.5,1) {$M_Y$};
    	\node[small_text, below left] (P) at (3.2,3.8) {$\mathcal{P}$};		
    	\node[small_text, above right] (l_x) at (5.2,4.2) {$\ell_X$};	
    	\node[small_text, above right] (l_y) at (3.1,-0.2) {$\ell_Y$};	

\end{tikzpicture}
\caption{$A^*\neq C^*$, $OZ \subseteq \Conv(M_Y \protect\cup \{Z\})$.}
\label{fig:Mx_My_non_superb_AnotC}
\end{subfigure}\hfill\hfill
\begin{subfigure}[t]{0.45\textwidth}
\captionsetup{aboveskip=20pt}
\centering
\begin{tikzpicture}[scale=0.9]

	\draw[light_grey, fill=light_grey!50] (3.34,4) -- (3,2) -- (4,4);
	\draw[gray] (0,0) -- (1,0) -- (2.75,0.5) -- (1.7,2) -- (0,4) -- (0,0);
	\draw[gray] (6,0) -- (6,4) -- (5,4) -- (4,3) -- (4,2) -- (6,0);
	\pattern[pattern=north east hatch, hatch distance=2mm, pattern color = light_grey, hatch thickness=.5pt] (3.34,4) -- (3,2) -- (4,3) -- (4.3,3.7) -- (4,4) -- (3.34,4);
	
    	\draw[dashed_line, name path=lx] (0.8,-0.2) -- (5.2,4.2);
    	\path (A) -- (2.75,0.5) coordinate[pos=-1.6](dd) coordinate[pos=1.4](ff);
	\draw[dashed_line, name path=lineCD] (dd) -- (A);
	\draw[dashed_line] (2.75,0.5) -- (ff);

	\draw[oriented_segment, name path=AB] (A) -- (B);
    	\draw[oriented_segment, name path=CD] (A) -- (2.75,0.5);
    
	\draw[name path=YZ,black!60]  (Y) -- (Z);    
    
    	\path [name intersections={of=lineCD and YZ, by=V}];

	\draw[black!60] (A) -- (Z);

    	\node[true_point] (A) at (3,2) {};
    	\node[false_point] (Y) at (0.2,3) {};
    	\node[false_point] (X) at (5.8,3) {};
    	\node[grid_white_point] (D) at (2.75,0.5) {};
    	\node[grid_white_point] (B) at (4,3) {};
    	\node[true_point] (Z) at (4.3,3.7) {};
    	\node[intersection_point] at (V) {};
    
    	\node[small_text, below right] at (A) {$A^*=C^*$};	
    	\node[small_text, below right] at (B) {$B^*$};	
    	\node[small_text, left] at (D) {$D^*$};	
    	\node[small_text, above] at (Y) {$Y$};	
    	\node[small_text, above] at (X) {$X$};		
    	\node[small_text, above left] at (V) {$V$};	
    	\node[small_text, right] at (Z) {$Z$};	
    	\node[small_text, above right] (Mx) at (4.7,2) {$M_X$};	
    	\node[small_text, above right] (My) at (0.5,1) {$M_Y$};	
    	\node[small_text, above right] (l_x) at (5.2,4.2) {$\ell_X$};	
    	\node[small_text, above right] at (ff) {$\ell_Y$};	

\end{tikzpicture}
\caption{$A^*=C^*$, $A^* \in \Conv(M_Y \cup \{Z\})$.}
\label{fig:Mx_My_non_superb_AisC}
\end{subfigure}
\caption{The pair $\{\protect\dir{A^*B^*}$, $\protect\dir{C^*D^*}\}$ is not \superb and $f_{\protect\dir{A^*B^*}}(D^*) = 0$. 
The grey region is $\protect\Conv(M_1(f))$. 
The stripped region is $\protect\mathcal{P}$. 
$S_1$ and $S_2$ have the different pattern orientation.}
\label{fig:Mx_My_non_superb}
\end{figure}

Suppose now that the pair $\{\dir{A^*B^*},\dir{C^*D^*}\}$ is not proper, which implies that
$f_{\dir{A^*B^*}}(D^*) = 0$ or $f_{\dir{C^*D^*}}(B^*) = 0$. 
There is no loss of generality in assuming $f_{\dir{A^*B^*}}(D^*) = 0$ (see  \cref{fig:Mx_My_non_superb_AnotC}).
Then \cref{prop:segments_zeros_ones} yields $f_{\dir{C^*D^*}}(B^*) = 1$.
Let $A^* \neq C^*$, the case $A^* = C^*$ will be considered separately.
From $f_{\dir{A^*B^*}}(C^*)\neq f_{\dir{A^*B^*}}(D^*)$ it follows that $\ell_X$ intersects $C^*D^*$.
We denote $O = \ell_X \cap C^*D^*$ and consider $\mathcal{P} = \Conv(M_1(f^*) \cup \{B^*,O\})$.
As in the previous case it can be verified that $\ell_X, \ell_Y$ are tangents to $\mathcal{P}$, and therefore $A^*$ and $C^*$ are tangent points. 
Thus the points of $\mathcal{P} \setminus A^*C^*$ are separated by $A^*C^*$ into two parts,  which we denote as $S_1$ and $S_2$.
We next prove that $O$ and $B^*$ are in different parts.
For this purpose, we consider the triangle $\ortr{B^*}{C^*}{D^*}$, and \cref{prop:segments_zeros_ones} implies $A^* \in \ortr{B^*}{C^*}{D^*}$.
It is easily seen that $OB^* = \ortr{B^*}{C^*}{D^*} \cap \ell(A^*B^*)$, hence $A^* \in OB^*$, and therefore $O$ and $B^*$ belong to the different parts, say $B^* \in S_1$ and $O \in S_2$.
Clearly, $Z \in \mathcal{P} \setminus A^*C^*$, and therefore either $Z \in S_1$ or $Z \in S_2$.
The latter would contradict (\ref{eq:MxZ}), so we assume the former holds, which in turn implies $OZ \cap A^*C^* \neq \emptyset$.
To obtain a contradiction with (\ref{eq:MyZ}) we will show $OZ \subseteq \Conv(M_Y \cup \{Z\})$.
To this end we first observe that $\ell_Y$ intersects $YZ$ because $f_{\dir{C^*D^*}}(Y) \neq f_{\dir{C^*D^*}}(Z)$. 
Let $V$ be the intersection point of $YZ$ and $\ell_Y$.
Now from $f_{\dir{A^*B^*}}(Y) = f_{\dir{A^*B^*}}(Z) = 1$ it follows that $V \in \Conv(M_1(f_{\dir{A^*B^*}}))$.
Since $D^* \in M_0(f_{\dir{A^*B^*}})$, we conclude that $\ell_X$ intersects $D^*V$  and $O \in D^*V$.
But $D^*V \subseteq \ortr{Y}{D^*}{Z} \subseteq \Conv(M_Y \cup \{Z\})$, and therefore $O \in \Conv(M_Y \cup \{Z\})$ and $OZ \subseteq \Conv(M_Y \cup \{Z\})$, leading to a contradiction.
Suppose now that $A^*=C^*$ (see \cref{fig:Mx_My_non_superb_AisC}). By replacing $O$ with $A^*$, and using arguments similar to the above one can show that $A^* \in \ortr{Y}{D^*}{Z}$ and $A^*Z \subseteq \Conv(M_Y \cup \{Z\})$, which contradicts (\ref{eq:MyZ}).
The contradictions in all the cases imply that $M_1(f^*) \setminus M_1(f) = \emptyset$, and hence $f = f^*$.

We have shown that $\{\dir{A^*B^*},\dir{C^*D^*}\}$ defines $f$. 
It remains to prove that \linebreak $\{\dir{A^*B^*},\dir{C^*D^*}\}$ is a \superb pair of segments.
Since $B^* \in M_X$ and $B^* \in M_0(f_{\dir{A^*B^*}})$, the definition of $M_X$ implies that $f_{\dir{C^*D^*}}(B^*) = 1$.
Similarly, from $D^*\in M_Y$ and $D^* \in M_0(f_{\dir{C^*D^*}})$ we conclude $f_{\dir{A^*B^*}}(D^*) = 1$.
Finally, the equality $f_{\dir{A^*B^*}}(C^*) = f_{\dir{C^*D^*}}(A^*) = 1$ follows from $A^*,C^* \in M_1(f)$.
Hence $\{\dir{A^*B^*},\dir{C^*D^*}\}$ is a \superb pair of segments that defines $f$, as claimed.
\end{proof}

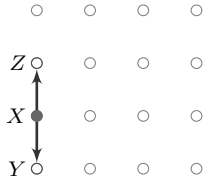
\begin{figure}
\centering
\begin{tikzpicture}[scale=0.7]
	\foreach \x in {0,...,3}
    	\foreach \y in {0,...,3}
    	{
    		\node[grid_point] (\x\y) at (\x,\y) {};
    	}
	\node[true_point] (X) at (01)  {};
	\node[false_point] (Y) at (00)  {};
	\node[false_point] (Z) at (02) {};
		
    	\draw[oriented_segment, name path=XY] (X) -- (Y);
    	\draw[oriented_segment, name path=XZ] (X) -- (Z);
	
    	\node[small_text, left] at (X) {$X$};
    	\node[small_text, left] at (Y) {$Y$};
    	\node[small_text, left] at (Z) {$Z$};
\end{tikzpicture}
\caption{$\{\protect\dir{XY}, \protect\dir{XZ}\}$ defines a 2-threshold function $f$ such that $M_1(f) = \{X\}$.}
\label{fig:singleton_on_boundary}
\end{figure}

When representing proper $2$-threshold functions via proper pairs of segments, it is natural to ask whether such a representation is unique or not? 
The examples in \cref{fig:two_pairs_dir_segments_gen} present proper $2$-threshold functions with at least two distinct proper pairs of segments representing them.
Moreover, the following statement shows that the number of distinct proper pairs of segments defining the same singleton-function can be as large as $\Theta(mn)$:
\begin{figure}
\begin{subfigure}[t]{0.48\textwidth}
	\captionsetup{aboveskip=20pt}
\centering
\begin{tikzpicture}[scale=1.1]
	\foreach \x in {0,...,4}
    	\foreach \y in {0,...,3}
    	{
    		\node[grid_point] (\x\y) at (\x,\y) {};
    	}

    	\node[small_text, above left] at (22) {$A$};
    	\node[small_text, right] at (23) {$B$};
    	\draw[oriented_segment, name path=AB] (22) -- (23);
    	\path (22) -- (23) coordinate[pos=-1.4](dd) coordinate[pos=2](ff);
	\draw[dashed_line, name path=lineAB] (dd) -- (22);
	\draw[dashed_line] (23) -- (ff);
	\node[true_point] (22) at (22) {};

    	\node[small_text, right] at (21) {$D$};
    	\draw[oriented_segment, name path=A'B'] (22) -- (21);
    	\path (22) -- (21) coordinate[pos=-1.4](dd) coordinate[pos=2](ff);
	\draw[dashed_line, name path=lineA'B'] (dd) -- (22);
	\draw[dashed_line] (21) -- (ff);

    	\node[small_text, above] at (43) {$B'$};
    	\draw[oriented_segment, name path=CD] (22) -- (43);
    	\path (22) -- (43) coordinate[pos=-0.7](dd) coordinate[pos=1.5](ff);
	\draw[dashed_line, name path=lineCD] (dd) -- (22);
	\draw[dashed_line] (43) -- (ff);

    	\node[small_text, left] at (01) {$D'$};
    	\draw[oriented_segment, name path=C'D'] (22) -- (01);
    	\path (22) -- (01) coordinate[pos=-0.6](dd) coordinate[pos=1.2](ff);
	\draw[dashed_line, name path=lineC'D'] (dd) -- (22);
	\draw[dashed_line] (01) -- (ff);
	
	\node[grid_white_point] (21) at (21) {};
	\node[grid_white_point] (23) at (23) {};

\end{tikzpicture}
\caption{\footnotesize{$M_1(f) = \{A\}$, $A=A'=C=C'=X=Y=Z=U$.}}
\label{fig:two_pairs_dir_segments_singleton}
\end{subfigure}\hfill\hfill
\begin{subfigure}[t]{0.48\textwidth}
\captionsetup{aboveskip=20pt}
\centering
\begin{tikzpicture}[scale=1.1]
	\foreach \x in {0,...,4}
    	\foreach \y in {0,...,3}
    	{
    		\node[grid_point] (\x\y) at (\x,\y) {};
    	}

    	\node[small_text, below right] at (11) {$C=C'$};
    	\node[small_text, above] at (43) {$D'$};
    	\draw[oriented_segment, name path=AB] (11) -- (43);
    	\path (11) -- (43) coordinate[pos=-0.4](dd) coordinate[pos=0.5](ff);
	\draw[dashed_line, name path=lineAB] (dd) -- (11);
	\draw[dashed_line] (43) -- (ff);
	\node[true_point] (11) at (11) {};

    	\node[small_text, above left] at (22) {$A=A'$};
    	\node[small_text, right] at (33) {$B$};
    	\draw[oriented_segment, name path=A'B'] (22) -- (33);
    	\path (22) -- (33) coordinate[pos=-1.4](dd) coordinate[pos=2](ff);
	\draw[dashed_line, name path=lineA'B'] (dd) -- (22);
	\draw[dashed_line] (33) -- (ff);
	\node[true_point] (22) at (22) {};

    	\node[small_text, above] at (01) {$B'$};
    	\draw[oriented_segment, name path=CD] (22) -- (01);
    	\path (22) -- (01) coordinate[pos=-0.7](dd) coordinate[pos=1.5](ff);
	\draw[dashed_line, name path=lineCD] (dd) -- (22);
	\draw[dashed_line] (01) -- (ff);

    	\node[small_text, left] at (00) {$D$};
    	\draw[oriented_segment, name path=C'D'] (11) -- (00);
    	\path (11) -- (00) coordinate[pos=-0.6](dd) coordinate[pos=1.2](ff);
	\draw[dashed_line, name path=lineC'D'] (dd) -- (11);
	\draw[dashed_line] (00) -- (ff);

\end{tikzpicture}
\caption{\footnotesize{$M_1(f) = \{A,C\}$, $A=A'=X=Y,C=C'=Z=U$.}}
\label{fig:two_pairs_dir_segments_line}
\end{subfigure}
\caption{Examples of $2$-threshold functions with two distinct \superb pairs of segments.}
\label{fig:two_pairs_dir_segments_gen}
\end{figure}
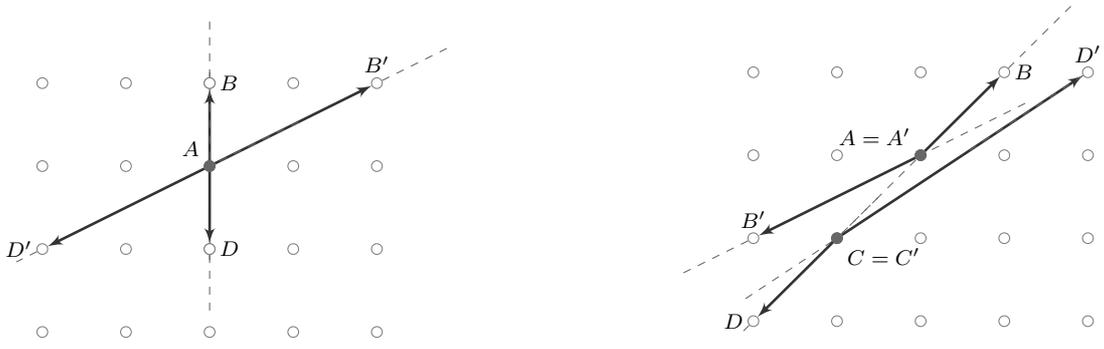

\begin{claim}
Let $f$ be a $\{ 0,1 \}$-valued function on $\gridMN$ with a unique true point $A=(a_1,a_2)$ such that $a_1 \in \{1,\dots,m-2\}$ and $a_2 \in \{1,\dots,n-2\}$. 
Then $f$ is a $2$-threshold function, the number of \superb pairs of segments defining $f$ is at most
$$
\frac{3}{\pi^2}mn + O(m\log n),
$$
and this upper bound is achievable by some functions.
\end{claim}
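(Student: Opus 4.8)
The overall plan: first check $f$ is $2$-threshold by producing an explicit proper pair defining it; then show that the proper pairs defining $f$ are \emph{exactly} the pairs of opposite primitive rays from the unique true point; and finally count these by a lattice-point estimate. Concretely, write $A=(a_1,a_2)$ and put $B=(a_1+1,a_2)$, $D=(a_1-1,a_2)$, both in $\gridMN$ since $1\le a_1\le m-2$. The segments $\dir{AB}$, $\dir{AD}$ are prime, $B$ and $D$ lie on $\ell(AB)=\ell(AD)$ at distance $1$ from $A$ and distance $2$ from the opposite endpoint, so \cref{def:segment_defines_f} gives $f_{\dir{AD}}(B)=f_{\dir{AB}}(D)=1$ while the endpoint values are $1$ automatically; hence $\{\dir{AB},\dir{AD}\}$ is a proper pair. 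By \cref{cor:threshold_no_dir_seg} it defines a proper $2$-threshold function, which by \cref{cor:singleton_from_segments} has unique true point $A$, so it equals $f$, and in particular $f$ is $2$-threshold.

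Next I would determine all proper pairs defining $f$. Let $\{\dir{PQ},\dir{RS}\}$ be one. Properness gives $f_{\dir{RS}}(P)=f_{\dir{PQ}}(R)=1$, and $f_{\dir{PQ}}(P)=f_{\dir{RS}}(R)=1$ because $P,R$ are endpoints, so $P,R\in M_1(f)=\{A\}$ and therefore $P=R=A$. Writing $Q=A+\vec b$, $S=A+\vec c$ with $\vec b\neq\vec c$ primitive integer vectors (the segments are prime), I claim $\vec c=-\vec b$: if $\vec b,\vec c$ were linearly independent then, placing $A$ at the origin, $f_{\dir{AQ}}(S)=1$ would force $\ortr{A}{Q}{S}$ to be counterclockwise, i.e.\ $b_1c_2-b_2c_1>0$, while $f_{\dir{AS}}(Q)=1$ would force $c_1b_2-c_2b_1>0$, and these quantities are negatives of each other --- a contradiction; hence $\vec c=\pm\vec b$, and $\vec c\neq\vec b$ leaves only $\vec c=-\vec b$. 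Also $A\pm\vec b\in\gridMN$. Conversely, for \emph{every} primitive $\vec b$ with $A\pm\vec b\in\gridMN$ the pair $\{\dir{A(A+\vec b)},\dir{A(A-\vec b)}\}$ is proper (the same distance computation as above) and defines $f$ (by \cref{cor:singleton_from_segments}). Since the map $\vec b\mapsto\{\dir{A(A+\vec b)},\dir{A(A-\vec b)}\}$ is two-to-one with fibre $\{\vec b,-\vec b\}$, the number of proper pairs defining $f$ is exactly half the number of primitive integer vectors $\vec b$ with $A+\vec b,A-\vec b\in\gridMN$.

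For the count, $A\pm\vec b\in\gridMN$ is equivalent to $|b_1|\le p$ and $|b_2|\le q$, where $p=\min(a_1,m-1-a_1)$ and $q=\min(a_2,n-1-a_2)$; note $1\le p\le\frac{m-1}{2}$ and $1\le q\le\frac{n-1}{2}$ by the hypothesis on $A$. By Möbius inversion, the number of primitive vectors in $[-p,p]\times[-q,q]$ equals $\sum_{d\ge1}\mu(d)\bigl((2\lfloor p/d\rfloor+1)(2\lfloor q/d\rfloor+1)-1\bigr)=\frac{24}{\pi^2}pq+O(m\log n)$, so the number of proper pairs defining $f$ is $\frac{12}{\pi^2}pq+O(m\log n)$; since $\frac{12}{\pi^2}pq\le\frac{12}{\pi^2}\cdot\frac{(m-1)(n-1)}{4}\le\frac{3}{\pi^2}mn$, the upper bound follows. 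For sharpness, take $a_1=\lfloor(m-1)/2\rfloor$ and $a_2=\lfloor(n-1)/2\rfloor$, so that $pq=\frac{mn}{4}+O(m+n)$; then this $f$ admits $\frac{3}{\pi^2}mn+O(m\log n)$ proper pairs, matching the bound.

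I expect the only genuinely nontrivial point to be the characterisation in the second paragraph --- specifically the determinant-sign obstruction, which rules out every non-collinear pair of prime segments emanating from $A$ and pins the pair down to a pair of opposite primitive rays. After that the argument is a routine lattice-point count; the single piece of bookkeeping is the standard error estimate for the Möbius sum, handled via $|\lfloor p/d\rfloor\lfloor q/d\rfloor-pq/d^2|\le p/d+q/d$ together with the usual bounds on $\sum_{d\le p}1/d$ and $\sum_{d>p}1/d^2$.
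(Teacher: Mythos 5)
Your proof is correct and follows essentially the same route as the paper: both reduce the problem to the observation that a proper pair defining $f$ must consist of two opposite primitive rays emanating from $A$ (the paper gets this by invoking \cref{th:superb_rectangle} for pairs $\{\dir{AB},\dir{AD}\}$, whereas you re-derive it directly via the determinant-sign argument), and then count primitive vectors $\vec b$ with $A\pm\vec b\in\gridMN$ using the standard $6/\pi^2$ density estimate, halving for the $\vec b\leftrightarrow-\vec b$ symmetry. The only point left implicit is that the two segments of a proper pair are distinct (otherwise $f$ would be threshold with more than one true point since $A$ is interior), which the paper likewise assumes without comment.
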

\begin{proof}
Without loss of generality we assume 
\begin{equation}
\label{eq:a1a2}
a_1 \leq \frac{m-1}{2}, a_2 \leq \frac{n-1}{2}.
\end{equation} 
Let $\dir{AB}$ and $\dir{AD}$ be distinct prime segments.
By \cref{th:superb_rectangle}, the pair $\{\dir{AB},\dir{AD}\}$ is \superb if and only if both segments belong to the same line.
Hence, if $\{\dir{AB},\dir{AD}\}$ is proper, then $d(\dir{AB}) = d(\dir{AD})$, and therefore all the considered  pairs of segments belong to a subgrid of size $(2a_1+1) \times (2a_2+1)$.
Next, we notice that for any given \superb pair $\{\dir{AB},\dir{AD}\}$ the points $B$ and $D$ are symmetric to 
each other with respect to $A$.
Therefore it is enough to estimate the number of choices for $B$. 
Let $B = (b_1,b_2), D = (d_1,d_2)$.
The only \superb pair with $b_1 = d_1$ is the pair where $\{B, D\} = \{(a_1, a_2 + 1),(a_1, a_2 - 1)\}$, so we can exclude this case and assume $b_1 \neq d_1$.
By symmetry, we may also assume $b_1 < d_1$.

Putting all together and using a standard number-theoretical formula
\begin{flalign*}
\sum_{p = 1}^{m}\sum_{\substack{q = 1 \\ q \perp p}}^{n}1 = \frac{6}{\pi^2} mn + O(m \log n)
\end{flalign*}

we derive the number of possible choices for $B$ (see \cref{fig:singleton_inside_grid}):
$$
\sum_{b_1 = 0}^{a_1-1}\sum_{\substack{b_2 = 0 \\ (b_1-a_1) \perp (b_2-a_2)}}^{2a_2 + 1}1 = \sum_{p = 1}^{a_1}\sum_{\substack{q = -a_2 \\ p \perp q}}^{a_2 + 1}1 =  \frac{12}{\pi^2}a_1a_2 + O(a_1\log a_2).
$$

The target estimation follows from the latter by replacing $a_1, a_2$ with their upper bound~(\ref{eq:a1a2}).
\end{proof}

\begin{figure}
	\captionsetup{aboveskip=20pt}
\centering
\begin{tikzpicture}[scale=0.7]
	
	\foreach \x in {0,...,12}
    	\foreach \y in {0,...,7}
    	{
    		\node[grid_point] (\x\y) at (\x,\y) {};
    	}

	\node[true_point] at (43) {};
	\draw[dashed_line] (80) -- (86);
    \draw[dashed_line] (06) -- (86);	
	\draw[dashed_line] (00) -- (06);
	\draw[dashed_line] (00) -- (80);

	\draw[oriented_segment] (43) -- (33);
	\draw[oriented_segment] (43) -- (32);
	\draw[oriented_segment] (43) -- (31);
	\draw[oriented_segment] (43) -- (30);
	\draw[oriented_segment] (43) -- (34);
	\draw[oriented_segment] (43) -- (35);
	\draw[oriented_segment] (43) -- (36);
	\draw[oriented_segment] (43) -- (20);
	\draw[oriented_segment] (43) -- (22);
	\draw[oriented_segment] (43) -- (24);
	\draw[oriented_segment] (43) -- (26);
	\draw[oriented_segment] (43) -- (11);
	\draw[oriented_segment] (43) -- (12);
	\draw[oriented_segment] (43) -- (14);
	\draw[oriented_segment] (43) -- (15);
	\draw[oriented_segment] (43) -- (00);
	\draw[oriented_segment] (43) -- (02);
	\draw[oriented_segment] (43) -- (04);
	\draw[oriented_segment] (43) -- (06);
    	
	\node[small_text, right] at (43) {$A$};

\end{tikzpicture}
\caption{\footnotesize{For $A = (4,3)$, all the \superb pairs of segments belong to the subgrid with the dashed boundary and $A$ in the center. The possible choices of $B$ are drawn on the left half of the subgrid.}}
\label{fig:singleton_inside_grid}
\end{figure}
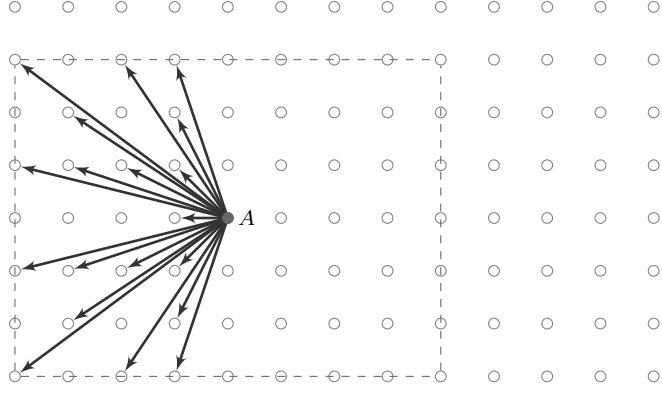

Although the representation via proper pair of segments is not unique for some $2$-threshold functions, it is unique for the functions that contain at least one true point on the boundary of the grid.
In the following lemma we prove this fact for the special case of singleton-functions, and then proceed with the general case.

\begin{lemma}
\label{prop:one_proper_for_point}
Let $f$ be a $\{ 0,1 \}$-valued function on $\gridMN$ with a unique true point $X=(x_1,x_2)$ such that 
either $x_1 \in \{0,m-1\}$ or $x_2 \in \{0,n-1\}$, but not both. 
Then $f$ is a proper $2$-threshold function with a unique \superb pair of segments defining $f$.
\end{lemma}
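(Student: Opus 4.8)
The plan is to pin down one explicit \superb pair of segments defining $f$ and then to show that every \superb pair defining $f$ must coincide with it. Throughout I would use that $M_1(f)=\{X\}$ is a singleton, so the pair, if it exists, is very constrained.

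\emph{Existence and properness of $f$.} It suffices to treat the case $x_1=0$ and $1\le x_2\le n-2$; the cases $x_1=m-1$, $x_2=0$, $x_2=n-1$ are entirely analogous (and can also be reduced to this one by a reflection of the grid, which carries \superb pairs defining a function bijectively to \superb pairs defining the reflected function). Put $Y=(0,x_2-1)$ and $Z=(0,x_2+1)$; both lie in $\gridMN$, and $X$ is the midpoint of the chain $Y,X,Z$ of consecutive integer points on the line $x_1=0$, so $\dir{XY}$ and $\dir{XZ}$ are collinear prime segments with $\{X\}\subseteq YZ$. By \cref{lem:superb_rectangle_1} (applied with the degenerate "segment" $AC=\{X\}$) the pair $\{\dir{XY},\dir{XZ}\}$ is \superb; by \cref{cor:singleton_from_segments} the $2$-threshold function $f_{\dir{XY}}\andl f_{\dir{XZ}}$ has exactly $\{X\}$ as its set of true points and hence equals $f$; and by \cref{cor:zeros_ones_intersection} this $f$ is a proper $2$-threshold function.

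\emph{Uniqueness.} Let $\{\dir{AB},\dir{CD}\}$ be an arbitrary \superb pair defining $f$. Since $f_{\dir{AB}}(A)=1$ by \cref{def:segment_defines_f} and $f_{\dir{CD}}(A)=1$ by properness, $A\in M_1(f)=\{X\}$, so $A=X$; symmetrically $C=X$. By \cref{prop:segment_end_points_enqualities} we get $B,D\neq X$ and $B\neq D$. Now I would apply \cref{th:superb_rectangle} to $\{\dir{AB},\dir{CD}\}=\{\dir{XB},\dir{XD}\}$: alternative~(3) would force $\dir{ABCD}=\dir{XBXD}$ to be a convex quadrilateral, which is impossible since two of its vertices coincide; alternative~(2) would force $X\in BD$ together with one of $\ortr{X}{D}{B}$, $\ortr{X}{B}{D}$ being a counterclockwise (hence non-degenerate) triangle, which is impossible because $X\in BD$ makes $X,B,D$ collinear. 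Hence alternative~(1) holds: $\{X\}\subseteq BD$ with $\dir{XB},\dir{XD}$ collinear, i.e.\ $B,X,D$ are collinear and $X$ lies strictly between $B$ and $D$. Primality of $XB$ and $XD$ then forces $B$ and $D$ to be the integer points of $\ell(BD)$ nearest to $X$ on its two sides, so $D=2X-B$; writing $B=X+(p,q)$ with $(p,q)\neq(0,0)$ and $\gcd(|p|,|q|)=1$ gives $D=X-(p,q)$. From $B,D\in\gridMN$ one reads off $|p|\le\min(x_1,m-1-x_1)$ and $|q|\le\min(x_2,n-1-x_2)$; since $x_1=0$ the first bound forces $p=0$ and hence $|q|=1$, so $(p,q)\in\{(0,1),(0,-1)\}$, and the two sign choices give the single unordered pair $\{\dir{XB},\dir{XD}\}=\{\dir{XY},\dir{XZ}\}$ found above. (In the horizontal-edge cases the same computation forces $q=0$, $|p|=1$, giving the analogous pair.)

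I do not expect a genuine obstacle here; the only point requiring care is running the trichotomy of \cref{th:superb_rectangle} in the degenerate regime $A=C=X$ (so that $AC$ is a single point), after which everything reduces to the elementary lattice/gcd computation above.
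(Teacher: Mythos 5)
Your proof is correct, and the existence half is essentially the paper's: the same reduction to $x_1=0$, the same explicit pair $\{\dir{XY},\dir{XZ}\}$ with $Y=(0,x_2-1)$, $Z=(0,x_2+1)$, justified via \cref{th:superb_rectangle}/\cref{lem:superb_rectangle_1}, \cref{cor:singleton_from_segments}, and \cref{cor:zeros_ones_intersection} (the paper additionally cites an external result to get that $f$ is $2$-threshold, which your route renders unnecessary). The uniqueness half takes a genuinely different route. After both arguments reduce to a pair of the form $\{\dir{XB},\dir{XD}\}$ (you justify $A=C=X$ explicitly; the paper assumes it tacitly), the paper argues directly from \cref{def:segment_defines_f}: from $f(Z)=0$ it gets that some $f_{\dir{XZ'}}$ is false at $Z$, computes the orientation determinant of $\ortr{X}{Z'}{Z}$ to be $z_1$, and uses the grid constraint $z_1\ge 0$ to force $Z'=Z$, after which $Y'=Y$ follows from primality. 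You instead run the trichotomy of \cref{th:superb_rectangle} in the degenerate regime $AC=\{X\}$ — which is legitimate, since \cref{prop:segment_end_points_enqualities} gives $X\neq B$, $X\neq D$, and collinearity of $X,B,D$ kills alternatives (2) and (3) — to conclude that $B,X,D$ are collinear with $X$ strictly interior to $BD$; primality then gives $D=2X-B$ with $B-X=(p,q)$ primitive, and $x_1=0$ forces $p=0$, $|q|=1$. Both proofs exploit the boundary condition in the same essential way; yours is more structural (classify the configuration first, then do a one-line lattice computation) and makes the degenerate use of the classification explicit, while the paper's is a shorter direct orientation computation that bypasses the trichotomy entirely. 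No gaps.
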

\begin{proof}
Due to symmetry it is enough to consider the case $x_1=0$ and $x_2 \in \{1,\dots,n-2\}$.
We will show that $\{\dir{XY}, \dir{XZ}\}$, where $Y = (0, x_2-1)$, $Z = (0,x_2+1)$,
is the desired pair (see \cref{fig:singleton_on_boundary}).
In \cite{Zamaraeva2016} it was proved that any $\{0,1\}$-function containing one true point is $k$-threshold for any $k \geq 2$, hence $f$ is a $2$-threshold function.
From \cref{th:superb_rectangle} and \cref{cor:singleton_from_segments} it follows that the pair $\{\dir{XY}, \dir{XZ}\}$ is \superb and defines $f$, and therefore $f$ is non-threshold.
Now, let us prove that there is no other \superb pair of segments that defines $f$.

Let $\{\dir{XY'},\dir{XZ'}\}$ be a \superb pair segments that defines $f$. 
We will show that $\{Y',Z'\} = \{Y,Z\}$.
First, $f(Z) = 0$ implies that $f_{\dir{XY'}}(Z)=0$ or $f_{\dir{XZ'}}(Z) = 0$.
Without loss of generality we assume $f_{\dir{XZ'}}(Z)=0$.
Since both $\dir{XZ}$ and $\dir{XZ'}$ are prime, we conclude that either $Z' = Z$ or $\ortr{X}{Z'}{Z}$ is a clockwise triangle.
For the sake of contradiction, let us assume the latter holds.
By definition of a clockwise triangle,
$$\begin{vmatrix}
0 & x_2 & 1 \\ 
z_1 & z_2 & 1 \\ 
0 & x_2+1 & 1
\notag
\end{vmatrix} = z_1 < 0,$$
where $Z' = (z_1,z_2)$.
But this contradicts $z_1\geq 0$, hence $Z'=Z$. Now let us show that $Y'=Y$.
Indeed, as $\{\dir{XY'}, \dir{XZ}\}$ is a \superb pair, by definition,
$Y' \in M_1(f_{\dir{XZ}}) = \{(0,0),(0,1),\dots,(0,x_2)\}$, and
therefore, since $\dir{XY'}$ is prime and $X = (0,x_2)$, we conclude that $Y' = (0,x_2-1) = Y$.
\end{proof}

\begin{theorem}
\label{prop:two_proper_only_inside}
For any proper 2-threshold function $f$ on $\gridMN$ that contains a true point on the boundary of $\gridMN$ 
there exists a \textup{unique} \superb pair of segments in $\gridMN$ that defines $f$.
\end{theorem}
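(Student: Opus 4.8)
Existence is immediate from \cref{th:proper_exist}, so the task is uniqueness. I would first handle the case $|M_1(f)|=1$. Any \superb pair $\{\dir{AB},\dir{CD}\}$ defining $f$ has $A,C\in M_1(f)$ by properness, hence $A=C$, and then by \cref{cor:singleton_from_segments} and \cref{lem:superb_rectangle_1} the pair consists of two collinear prime segments through $A$ with $A$ lying strictly between their far endpoints; since a corner of $\gridMN$ cannot lie strictly between two grid points, \cref{th:proper_exist} forces the unique true point of $f$ to be a boundary point that is not a corner, and then \cref{prop:one_proper_for_point} supplies the unique \superb pair. From now on assume $|M_1(f)|\ge 2$.

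For $|M_1(f)|\ge 2$ the reduction I would use is that, by \cref{lem:dir_seg_exist}, a non-constant threshold function determines its oriented prime segment uniquely, so it suffices to show that the \emph{unordered pair of threshold functions} $\{f_{\dir{AB}},f_{\dir{CD}}\}$ attached to a \superb pair $\{\dir{AB},\dir{CD}\}$ defining $f$ is determined by $f$. Here $A\ne C$, for otherwise \cref{cor:singleton_from_segments} would give $M_1(f)=\{A\}$. Two structural observations are the anchors. First, $\ell(AB)$ is a supporting line of $P(f)$ with $A$ a contact point: $M_1(f)\subseteq M_1(f_{\dir{AB}})$ and \cref{def:segment_defines_f} force every true point off $\ell(AB)$ to lie strictly on the counterclockwise side of $\dir{AB}$, and likewise for $\ell(CD)$ and $C$. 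Second, the properness equalities $f_{\dir{CD}}(B)=f_{\dir{AB}}(D)=1$ say that $B$ lies on the $P(f)$-side of $\ell(CD)$ and $D$ on the $P(f)$-side of $\ell(AB)$, so each of $B,D$ is a false point of $f$ that is ``charged'' to exactly one of the two segments.

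Next I would pick a vertex $T$ of $P(f)$ lying on $\partial\gridMN$; such a vertex exists, since if $M_1(f)$ meets, say, the grid edge $x=0$ then $P(f)\subseteq\{x\ge 0\}$ touches that edge, and the edge of $P(f)$ (or isolated vertex) realizing the contact cannot cross $\{x=0\}$, so an endpoint of it — a vertex of $P(f)$ — lies on $x=0$. Given two \superb pairs $\{\dir{AB},\dir{CD}\}$ and $\{\dir{A'B'},\dir{C'D'}\}$ defining $f$, I would classify each by which case of \cref{th:superb_rectangle} it realizes (collinear; $\Conv$ a triangle; $\Conv$ a quadrilateral) and compare the two quadruples. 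In the collinear case \cref{cor:all_ones_on_line} forces $\{A,C\}$ to be the two extreme lattice points of the segment $M_1(f)$, after which primality fixes $B$ and $D$. In the non-collinear cases one shows, by tracking the orientations of the triangles $\ortr{A}{B}{X}$ and $\ortr{C}{D}{X}$ for $X$ ranging over $M_1(f)\cup\{B,C,D\}$ via \cref{prop:theSameOrient,prop:collinear_segments_and_point,prop:clockwise_triangles,prop:convex_rectangle}, and using that $T$ is a vertex of $P(f)$ on $\partial\gridMN$, that the data $(\ell(AB),A)$ and $(\ell(CD),C)$ are the same for every \superb pair defining $f$; since the segments are prime, \cref{lem:dir_seg_exist} then makes the pairs coincide.

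The step I expect to be the main obstacle is precisely this last one: ruling out that the decomposition can be ``shifted''. Without a true point on $\partial\gridMN$ it genuinely can be — the functions in \cref{fig:two_pairs_dir_segments_gen} each admit two distinct \superb pairs — the freedom coming from redistributing the grid points that are false for \emph{both} threshold components between the two components. The heart of the proof must therefore be a precise statement that a true point of $f$ on the boundary of $\gridMN$ removes this freedom, because the supporting line of $P(f)$ at a boundary vertex cannot be tilted past the corresponding edge of $\gridMN$ without excluding a true point, which pins down the relevant prime segment. Carrying this out uniformly over the cases of \cref{th:superb_rectangle} — including the subtle point that when the extreme lattice point of a collinear $P(f)$ lies on $\partial\gridMN$ the collinear \superb pair may fail to exist at all, forcing a non-collinear one — is where the real work lies.
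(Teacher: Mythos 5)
Your outline gets the easy parts right: existence via \cref{th:proper_exist}, the reduction of uniqueness to the unordered pair of threshold functions via \cref{lem:dir_seg_exist}, and the singleton case (your observation that a corner of $\gridMN$ cannot lie strictly between two grid points, combined with \cref{cor:singleton_from_segments} and \cref{prop:one_proper_for_point}, is sound and close to how the paper disposes of its Case 2). But for $|M_1(f)|\ge 2$ the proposal contains no proof. The entire content of the theorem is the assertion that the data $(\ell(AB),A)$ and $(\ell(CD),C)$ are the same for every proper pair defining $f$, and your text says only that ``one shows'' this ``by tracking orientations,'' then concedes that carrying it out ``is where the real work lies.'' The heuristic you offer in its place --- that a supporting line of $P(f)$ at a boundary vertex cannot be tilted past the grid edge --- does not engage with the actual freedom: in the two-pair examples of \cref{fig:two_pairs_dir_segments_gen} the two proper pairs have entirely different supporting lines, touching $P(f)$ at possibly different vertices, and nothing in your plan explains why a boundary true point forces the lines (rather than merely some contact point) to agree. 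As written, the argument is circular at its key step.

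For comparison, the paper's mechanism is global rather than local. Assuming two distinct proper pairs $\{\dir{AB},\dir{CD}\}$ and $\{\dir{A'B'},\dir{C'D'}\}$, it first shows via \cref{prop:segments_zeros_ones} that the pairs share no segment, then derives a cyclic chain of sign conditions ($f_{\dir{A'B'}}(B)=0\Rightarrow f_{\dir{CD}}(B')=0\Rightarrow f_{\dir{C'D'}}(D)=0\Rightarrow f_{\dir{AB}}(D')=0$) producing four intersection points $X=\ell(A'B')\cap AB$, $Y=\ell(CD)\cap A'B'$, $Z=\ell(C'D')\cap CD$, $U=\ell(AB)\cap C'D'$. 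Using \cref{prop:convex_rectangle} it shows $\Conv(\{X,Y,Z,U\})$ is a polygon circumscribed about $\Conv(M_1(f))$, and then --- this is the step your plan has no substitute for --- that each of $X,Y,Z,U$ is an \emph{interior} point of $\Conv(\gridMN)$, because if, say, $X$ lay on the boundary then $\ell(AB)$ would be tangent to $\Conv(\gridMN)$ while separating $D$ from $D'$, which is impossible. Hence $M_1(f)$ is trapped strictly inside the grid, contradicting the hypothesis. Degenerate coincidences among $X,Y,Z,U$ are handled as separate cases (one of which is your singleton case). To complete your proposal you would need to supply an argument of comparable substance; none is present.
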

\begin{proof}
By \cref{th:proper_exist}, there exists at least one \superb pair of segments that defines $f$.
Suppose, for the sake of contradiction, that there are two different \superb pairs of segments defining $f$, which
we denote as $\{\dir{AB}, \dir{CD}\}$ and $\{\dir{A'B'}, \dir{C'D'}\}$ respectively.

First we will prove that 
\begin{equation}
\label{eq:empty_intersection}
\{\dir{AB}, \dir{CD}\} \cap \{\dir{A'B'}, \dir{C'D'}\} = \emptyset.
\end{equation}
Suppose, to the contrary, that $\dir{AB}=\dir{A'B'}$, then $\dir{CD} \neq \dir{C'D'}$.
Since $f_{\dir{AB}}(D)=f_{\dir{AB}}(D') = 1$ and $f(D) = f(D') = 0$, we have $f_{\dir{C'D'}}(D) = f_{\dir{CD}}(D') = 0$.
Furthermore, $f(C)=f(C')=1$ implies $f_{\dir{C'D'}}(C)=f_{\dir{CD}}(C')=1$.
On the other hand, by \cref{prop:segments_zeros_ones}, the equations $f_{\dir{C'D'}}(C)=1, f_{\dir{CD}}(C')=1, f_{\dir{CD}}(D') = 0$ imply $f_{\dir{C'D'}}(D) =1$, a contradiction.

Now we will look more closely at the functions $f_{\dir{AB}}, f_{\dir{CD}}, f_{\dir{A'B'}}$, and $f_{\dir{C'D'}}$.
Since $f(B) = 0$, we have either $f_{\dir{A'B'}}(B) = 0$ or $f_{\dir{C'D'}}(B)= 0$.
Without loss of generality we assume $f_{\dir{A'B'}}(B) = 0$.
From $f_{\dir{AB}}(A') = 1, f_{\dir{A'B'}}(A) = 1, f_{\dir{A'B'}}(B) = 0$, and \cref{prop:segments_zeros_ones} it follows that the points $A,B,B'$ are not collinear and 
$f_{\dir{AB}}(B') = 1$.
The latter together with the fact that $f(B') = 0$ imply $f_{\dir{CD}}(B') = 0$.
By \cref{cor:intersection_is_point}, the line $\ell(A'B')$ intersects $AB$ in a unique point, which we denote by $X$, and $A' \in XB'$.

Analysis similar to above shows that $f_{\dir{CD}}(B') = 0$ implies $f_{\dir{C'D'}}(D) = 0$ and 
that the line $\ell(CD)$ intersects $A'B'$ in a unique point, which we denote by $Y$, and $C \in YD$.
In turn, the equation $f_{\dir{C'D'}}(D) = 0$ implies $f_{\dir{AB}}(D') = 0$ and 
the intersection of $\ell(C'D')$ and $CD$ in a unique point denoted by $Z$, and $C' \in ZD'$.
Finally, the equation $f_{\dir{AB}}(D') = 0$ implies that $\ell(AB)$ intersects $C'D'$ in a unique point denoted by $U$, and $A \in UB$.

\begin{figure}
\centering
\begin{tikzpicture}[scale=0.95]

	\draw[light_grey, fill=light_grey] (3,2) -- (5,3) -- (6,4) -- (6,7) -- (4,7) -- (3,6) -- (3,2);

    	
    	\node[small_text, below] at (3,2) {$A$};
    	\node[small_text, above] at (7,3) {$B$};
    	\draw[oriented_segment, name path=AB] (3,2) -- (7,3);
    	\path (3,2) -- (7,3) coordinate[pos=-0.7](dd) coordinate[pos=1.4](ff);
	\draw[dashed_line, name path=lineAB] (dd) -- (3,2);
	\draw[dashed_line] (7,3) -- (ff);
	\node[true_point] (32) at (3,2) {};
	\node[grid_white_point] (73) at (7,3) {};

    	\node[small_text, right] at (6,4) {$A'$};
    	\node[small_text, right] at (7,8) {$B'$};
    	\draw[oriented_segment, name path=A'B'] (6,4) -- (7,8);
    	\path (6,4) -- (7,8) coordinate[pos=-0.7](dd) coordinate[pos=1.25](ff);
	\draw[dashed_line, name path=lineA'B'] (dd) -- (6,4);
	\draw[dashed_line] (7,8) -- (ff);
	\node[grid_white_point] (64) at (6,4) {};
	\node[grid_white_point] (78) at (7,8) {};

    	\node[small_text, above] at (6,7) {$C$};
    	\node[small_text, above] at (2,8) {$D$};
    	\draw[oriented_segment, name path=CD] (6,7) -- (2,8);
    	\path (6,7) -- (2,8) coordinate[pos=-0.7](dd) coordinate[pos=1.5](ff);
	\draw[dashed_line, name path=lineCD] (dd) -- (6,7);
	\draw[dashed_line] (2,8) -- (ff);
	\node[true_point] (67) at (6,7) {};
	\node[grid_white_point] (28) at (2,8) {};

    	\node[small_text, left] at (3,6) {$C'$};
    	\node[small_text, left] at (2,1) {$D'$};
    	\draw[oriented_segment, name path=C'D'] (3,6) -- (2,1);
    	\path (3,6) -- (2,1) coordinate[pos=-0.6](dd) coordinate[pos=1.1](ff);
	\draw[dashed_line, name path=lineC'D'] (dd) -- (3,6);
	\draw[dashed_line] (2,1) -- (ff);
	\node[true_point] (36) at (3,6) {};
	\node[grid_white_point] (21) at (2,1) {};

	\path [name intersections={of=lineA'B' and AB, by=X}];
	\node[intersection_point] (X) at (X) {};
	\node[small_text, below right] at (X) {$X$};	
	\path [name intersections={of=lineCD and A'B', by=Y}];
	\node[intersection_point] (Y) at (Y) {};
	\node[small_text, below right] at (Y) {$Y$};
	\path [name intersections={of=lineC'D' and CD, by=Z}];
	\node[intersection_point] (Z) at (Z) {};
	\node[small_text, above right] at (Z) {$Z$};
	\path [name intersections={of=lineAB and C'D', by=U}];
	\node[intersection_point] (U) at (U) {};
	\node[small_text, below right] at (U) {$U$};
	
\end{tikzpicture}
\caption{The grey region is $\Conv(M_1(f))$, which is included in $\Conv(\{X,Y,Z,U\})$.}
\label{fig:two_pairs_dir_segments}
\end{figure}
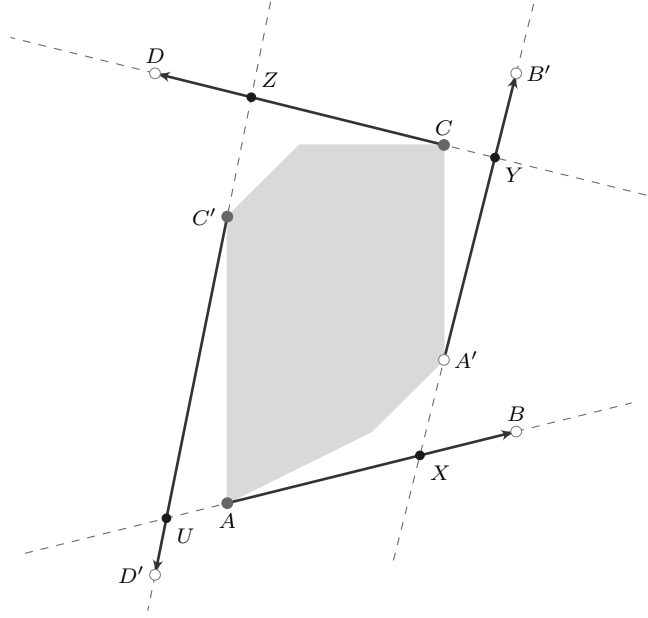

In the rest of the proof we will show that $M_1(f) \subseteq \Conv(\{X,Y,Z,U\})$ and that $X$, $Y$, $Z$, $U$ are interior points of $\Conv(\gridMN)$, which will lead to a contradiction (see \cref{fig:two_pairs_dir_segments}).
We will consider four different cases.

\medskip
\noindent
\textbf{Case 1.} \textit{The points $X,Y,Z,U$ are pairwise distinct.}
First we will show that \linebreak $\Conv(\{X,Y,Z,U\})$ is a counterclockwise quadrilateral with the edges $XY$, $YZ$, $ZU$, and $UX$ (see  \cref{fig:two_pairs_dir_segments}).
Applied to $f_{\dir{AB}},f_{\dir{C'D'}}$, \cref{prop:segments_zeros_ones} yields $A \in \ortr{B}{C'}{D'}$, and hence $A \in UB$.
The latter together with $X \in AB$ imply that $\dir{AB}$ and $\dir{UX}$ have the same orientation.
By similar arguments, $\dir{A'B'}$ and $\dir{XY}$, $\dir{CD}$ and $\dir{YZ}$, and $\dir{C'D'}$ and $\dir{ZU}$ have the same orientation respectively.
Now we observe that the assumption $Y \neq Z$ implies $Z \not\in \ell(A'B')$. 
Therefore, since $f_{\dir{A'B'}}(C) = f_{\dir{A'B'}}(D) = 1$ and
$Z \in CD$, the triangle $\ortr{A'}{B'}{Z}$ is counterclockwise.
Hence, by \cref{prop:collinear_segments_and_point}, the triangle $\ortr{X}{Y}{Z}$ is counterclockwise.
By similar arguments, the triangles $\ortr{Y}{Z}{U}$, $\ortr{Z}{U}{X}$, $\ortr{U}{X}{Y}$ are counterclockwise.
Consequently, by \cref{prop:convex_rectangle}, $\Conv(\{X,Y,Z,U\})$ is a quadrilateral $XYZU$ with edges $XY$, $YZ$, $ZU$, $UX$.

Next, the inclusion $\Conv(M_1(f)) \subseteq XYZU$ follows from the fact that $XYZU$ is a polygon circumscribed about 
$\Conv(M_1(f))$.
Indeed, each of the lines 
$\ell(A'B') = \ell(XY)$, $\ell(CD) = \ell(YZ)$, $\ell(C'D') = \ell(ZU)$, and $\ell(AB) = \ell(UX)$
is a tangent to $\Conv(M_1(f))$, and $A' \in XY \cap \Conv(M_1(f))$, $C \in YZ \cap \Conv(M_1(f))$, $C' \in ZU \cap \Conv(M_1(f))$, $A \in UX \cap \Conv(M_1(f))$.

It remains to prove that all the points $X,Y,Z$, and $U$ are interior points of $\Conv(\gridMN)$, i.e.
$X,Y,Z,U \notin B(\gridMN)$, where 
\[B(\gridMN) = \{0,m-1\}\times[0,n-1] \cup [0,m-1]\times\{0,n-1\}.
\]
We will prove that $X \notin B(\gridMN)$, for the other three points the arguments are similar.
Suppose, to the contrary, that $X \in B(\gridMN)$.
Since $X \in AB$ and $A \in UB$, we have $X \in UB$.
We claim that $X$ is an interior point of $UB$. Indeed, $X \neq U$ by the assumption.
Furthermore, the equality $X = B$ would imply $A' \in BB'$, which is not possible as $f_{\dir{A'B'}}$ is a threshold function
and $f_{\dir{A'B'}}(B) = 0, f_{\dir{A'B'}}(A') = 1, f_{\dir{A'B'}}(B') = 0$.
Now, since both $U$ and $B$ belong to $\Conv(\gridMN)$, and $X$ is an interior point of $UB$ and 
a boundary point of $\Conv(\gridMN)$, we conclude that $\ell(UB)=\ell(AB)$ is a tangent to $\Conv(\gridMN)$.
We will arrive to a contradiction by showing that $\ell(AB)$ separates $D$ and $D'$.
First, we observe that $D' \notin \ell(AB)$, as otherwise we would have $U=D'$ and $A \in D'B$, which is not possible as $f_{\dir{AB}}$ is threshold and $f_{\dir{AB}}(B)=0, f_{\dir{AB}}(A)=1, f_{\dir{AB}}(D')=0$.
Consequently, $\ortr{A}{B}{D'}$ is a clockwise triangle. On the other hand, the triangle $\ortr{A}{B}{D}$ is counterclockwise as the pair $\{\dir{AB},\dir{CD}\}$ is proper.
Therefore, $\ell(AB)$ separates $D$ and $D'$. This contradiction proves that $X$ does not belong to $B(\gridMN)$.

\medskip
\noindent
\textbf{Case 2.} \textit{$X=Z$ or $Y=U$.}
Suppose $X = Z$. Then from $X \in AB$ and $Z \in CD$ it follows that $AB$ and $CD$ intersect.
However, $\{\dir{AB},\dir{CD}\}$ is a \superb pair of segments, and, by \cref{cor:superb_intersect}, we have $M_1(f) = \{A\}$ (see  \cref{fig:two_pairs_dir_segments_singleton}).
Since $f$ is a proper 2-threshold function, $A$ is not a vertex of $\Conv(\gridMN)$, and therefore
\cref{prop:one_proper_for_point} implies $A \in  \{1,\dots,m-2\}\times\{1,\dots,n-2\}$, as required.
The case $Y = U$ is symmetric and we omit the details.

\medskip
\noindent
\textbf{Case 3.} \textit{$|\{X,Y,Z,U\}| = 3$, $X \neq Z$, and $Y \neq U$.}
Let $X = Y$, using the same arguments as in Case 1 it can be shown that $\ortr{X}{Z}{U}$ is a triangle circumscribed about $\Conv(M_1(f))$, and that none of $X,Z$,
and $U$ lies on the boundary of $\gridMN$.
The cases $X = U$, $Y = Z$, and $Z = U$ are symmetric and we omit the details.

\medskip
\noindent
\textbf{Case 4.} \textit{$|\{X,Y,Z,U\}| = 2$ and $X \neq Z, Y \neq U$.}
Then either $X=Y$ and $U = Z$ or $X=U$ and $Y=Z$.
The two cases are symmetric and therefore we consider only one of them, namely, $X=Y$, $U = Z$.
First we will show that $\Conv(M_1(f)) = AC$.
Indeed, from $X \in AB$, $Y \in A'B'$, and $A' \in \ortr{A}{B}{B'}$ it follows that $X = Y = A'$, and hence $A = A'$ as $AB$ is prime.
Moreover, $Y \in \ell(CD)$ together with $Y = A$ imply that $A,C,D$ are collinear points, and hence $\Conv(\{A,B,C,D\})$ has at most three vertices.
Then, by \cref{th:superb_rectangle}, either $A \in BD$ or $C \in BD$ or both.
All cases lead to the conclusion that $A,B,C,D$ are collinear, and, by \cref{cor:all_ones_on_line}, we have $\Conv(M_1(f)) = AC$ (see  \cref{fig:two_pairs_dir_segments_line}).

Now, it remains to show that $A,C \notin B(\gridMN)$.
Conversely, suppose $A \in B(\gridMN)$ or $C \in B(\gridMN)$. 
Without loss of generality we assume the former, which
in turn implies that $\ell(AB)$ is a tangent to $\Conv(\gridMN)$ as $A$ is an interior point of $BD$ and $B,D \in \gridMN$.
We will arrive to a contradiction by showing that $\ell(AB)$ separates $B'$ and $D'$.
For this we observe that neither $\dir{A'B'}$ nor $\dir{C'D'}$ belongs to $\ell(AB)$. Indeed, as by \cref{th:superb_rectangle}
$AC \subset BD$, the inclusion $A'B' \subset \ell(AB)$ would imply that $A'B'$ coincides either with $AB$ or with $CD$,
and the inclusion $C'D' \subset \ell(AB)$ would imply that $C'D'$ coincides either with $AB$ or with $CD$.
In each of the cases we would have a contradiction with (\ref{eq:empty_intersection}).
This observation together with the fact that $A',C' \in M_1(f) \subseteq AC \subset \ell(AB)$ imply that neither $B'$ nor $D'$
belongs to $\ell(AB)$.
Consequently, as $f_{\dir{AB}}$ takes different values in $B'$ and $D'$ we conclude that $\ell(AB)$ separates $B'$ and $D'$, as required.
\end{proof}

\section{Conclusion}

In this paper we characterized $2$-threshold functions via pairs of oriented prime segments.
We introduced the notion of proper pairs of segments and showed that the endpoints of the segments in a proper pair of segments are essential for the function they define.
In this way a $2$-threshold function $f$ can be defined by a partially ordered set of $4$ essential points.
This representation is more space efficient than the representations by a minimal specifying set of $f$ or by the convex polygon $\Conv(M_1(f))$ (see \cite{Zunic1995, Zamaraeva2017}).

We also established the existence of a defining proper pair of segments for every proper $2$-threshold function and the uniqueness of such a pair for the functions with a true point on the boundary of the grid.
In fact, in the subsequent work \cite{part2} we prove that almost all $2$-threshold functions have a true point on the boundary of the grid and use the uniqueness of their representation to derive the first asymptotic formula for the number of $2$-threshold functions.


It is natural to wonder whether the approach we used to characterize $2$-threshold functions can be generalized to higher order threshold functions, say to $3$-threshold functions.
One difference between $2$-threshold and $3$-threshold functions that might be an obstacle towards such a generalization is an observation that for $3$-threshold functions the requirement to have a true point on the boundary of the grid is more restrictive than for $2$-threshold functions.
This is an issue for future research to explore.

\bibliographystyle{unsrt}

\end{document}